\newtheorem{theorem}{\bf Theorem}  
\newtheorem{lemma}{\bf Lemma} \newtheorem{remark}{\bf Remark}
  \newtheorem{proposition}{\bf Proposition} 
\newtheorem{assumption}{\bf Assumption}  
\newtheorem{Algorithm}{\bf Algorithm}
\newcommand\bbr{\mathbb{R}}
\newcommand\calZ{\mathcal{Z}}
\newcommand\rma{\mathrm{a}}
\newcommand\rmb{\mathrm{b}}
\newcommand\rmc{\mathrm{c}}
\newcommand\rmf{\mathrm{f}}
\newcommand\rml{\mathrm{l}}
\newcommand\rms{\mathrm{s}}
\newcommand\rmr{\mathrm{r}}
\newcommand\rmu{\mathrm{u}}
\newcommand\rmy{\mathrm{y}}
\newcommand\rmA{\mathrm{A}}
\newcommand\rmB{\mathrm{B}}
\newcommand\rmV{\mathrm{V}}
\newcommand\rmeq{\mathrm{eq}}
\newcommand\rmlin{\mathrm{Lin}}
\newcommand\rmsp{{\rms}\text{$'$}}
\begin{document}
%
% paper title
% Titles are generally capitalized except for words such as a, an, and, as,
% at, but, by, for, in, nor, of, on, or, the, to and up, which are usually
% not capitalized unless they are the first or last word of the title.
% Linebreaks \\ can be used within to get better formatting as desired.
% Do not put math or special symbols in the title.
\title{Linear tracking MPC for nonlinear systems\\Part I: The model-based case}
%
%
% author names and IEEE memberships
% note positions of commas and nonbreaking spaces ( ~ ) LaTeX will not break
% a structure at a ~ so this keeps an author's name from being broken across
% two lines.
% use \thanks{} to gain access to the first footnote area
% a separate \thanks must be used for each paragraph as LaTeX2e's \thanks
% was not built to handle multiple paragraphs
%

\author{Julian Berberich$^1$, %~\IEEEmembership{Member,~IEEE,}
		Johannes K\"ohler$^{1,2}$, %~\IEEEmembership{Fellow,~OSA,}
		Matthias A. M\"uller$^3$, %~\IEEEmembership{Life~Fellow,~IEEE}
		and Frank Allg\"ower$^1$.% <-this % stops a space
		\thanks{Funded by Deutsche Forschungsgemeinschaft (DFG, German Research Foundation) under Germany's Excellence Strategy - EXC 2075 - 390740016 and under grant 468094890. 
		We acknowledge the support by the Stuttgart Center for Simulation Science (SimTech).
This project has received funding from the European Research Council (ERC) under the European Union’s Horizon 2020 research and innovation programme (grant agreement No 948679).
The authors thank the International Max Planck Research School for Intelligent Systems (IMPRS-IS)
for supporting Julian Berberich.}
\thanks{$^1$University of Stuttgart, Institute for Systems Theory and Automatic Control, 70550 Stuttgart, Germany (email:$\{$julian.berberich, frank.allgower$\}$@ist.uni-stuttgart.de)}
\thanks{$^2$Institute for Dynamical Systems and Control, ETH Zurich, ZH-8092, Switzerland (email:jkoehle@ethz.ch)}
\thanks{$^3$Leibniz University Hannover, Institute of Automatic Control, 30167 Hannover, Germany (e-mail:mueller@irt.uni-hannover.de)}}% <-this % stops a space
%\thanks{Manuscript received April 19, 2005; revised August 26, 2015.}}

% note the % following the last \IEEEmembership and also \thanks - 
% these prevent an unwanted space from occurring between the last author name
% and the end of the author line. i.e., if you had this:
% 
% \author{....lastname \thanks{...} \thanks{...} }
%                     ^------------^------------^----Do not want these spaces!
%
% a space would be appended to the last name and could cause every name on that
% line to be shifted left slightly. This is one of those "LaTeX things". For
% instance, "\textbf{A} \textbf{B}" will typeset as "A B" not "AB". To get
% "AB" then you have to do: "\textbf{A}\textbf{B}"
% \thanks is no different in this regard, so shield the last } of each \thanks
% that ends a line with a % and do not let a space in before the next \thanks.
% Spaces after \IEEEmembership other than the last one are OK (and needed) as
% you are supposed to have spaces between the names. For what it is worth,
% this is a minor point as most people would not even notice if the said evil
% space somehow managed to creep in.

% The paper headers
\markboth{}%
{}
% The only time the second header will appear is for the odd numbered pages
% after the title page when using the twoside option.
% 
% *** Note that you probably will NOT want to include the author's ***
% *** name in the headers of peer review papers.                   ***
% You can use \ifCLASSOPTIONpeerreview for conditional compilation here if
% you desire.

% If you want to put a publisher's ID mark on the page you can do it like
% this:
%\IEEEpubid{0000--0000/00\$00.00~\copyright~2015 IEEE}
% Remember, if you use this you must call \IEEEpubidadjcol in the second
% column for its text to clear the IEEEpubid mark.

% use for special paper notices
%\IEEEspecialpapernotice{(Invited Paper)}

\IEEEoverridecommandlockouts

\IEEEpubid{\begin{minipage}{\textwidth}\ \\[12pt] \\ \\
\copyright 2021 IEEE. Personal use of this material is permitted. Permission from IEEE must be obtained for all other uses, in any current or future media, including reprinting/republishing this material for advertising or promotional purposes, creating new collective works, for resale or redistribution to servers or lists, or reuse of any copyrighted component of this work in other works.
\end{minipage}}

% make the title area
\maketitle

% As a general rule, do not put math, special symbols or citations
% in the abstract or keywords.
\begin{abstract}
We develop a tracking model predictive control (MPC) scheme for nonlinear systems using the linearized dynamics at the current state as a prediction model.
Under reasonable assumptions on the linearized dynamics, we prove that the proposed MPC scheme exponentially stabilizes the optimal reachable equilibrium w.r.t. a desired target setpoint.
Our theoretical results rely on the fact that, close to the steady-state manifold, the prediction error of the linearization is small and hence, we can slide along the steady-state manifold towards the optimal reachable equilibrium.
The closed-loop stability properties mainly depend on a cost matrix which allows us to trade off performance, robustness, and the size of the region of attraction.
In an application to a nonlinear continuous stirred tank reactor, we show that the scheme, which only requires solving a convex quadratic program online, has comparable performance to a nonlinear MPC scheme while being computationally significantly more efficient.
Further, our results provide the basis for controlling nonlinear systems based on data-dependent linear prediction models, which we explore in our companion paper.
\end{abstract}

% Note that keywords are not normally used for peerreview papers.
%\begin{IEEEkeywords}
%Predictive control for linear systems, nonlinear systems, time varying systems, tracking.
%\end{IEEEkeywords}

% For peer review papers, you can put extra information on the cover
% page as needed:
% \ifCLASSOPTIONpeerreview
% \begin{center} \bfseries EDICS Category: 3-BBND \end{center}
% \fi
%
% For peerreview papers, this IEEEtran command inserts a page break and
% creates the second title. It will be ignored for other modes.
\IEEEpeerreviewmaketitle
\section{Introduction}

Model predictive control (MPC) is a successful modern control technique, mainly due to its ability to cope with nonlinear dynamics, hard input and state constraints, and dynamic reference trajectories~\cite{rawlings2020model}.
The key idea of MPC centers around the repeated solution of an open-loop optimal control problem.
While its applicability to nonlinear systems is one of the main advantages of MPC, this requires solving a non-convex optimization problem online.
In order to cope with limited computational resources, usually only a limited number of iterations are performed~\cite{diehl2005nominal,diehl2009efficient,liao2020time,zanelli2021lyapunov}.
Further, most nonlinear MPC schemes require a (globally) accurate model of the underlying dynamical system and, if this is not available, techniques such as robust nonlinear MPC need to be employed, possibly further increasing the computational complexity~\cite{mayne2011tube}.

In this paper, we propose a tracking MPC scheme for nonlinear systems using the linearized dynamics at the current state as a prediction model.
We consider a tracking MPC formulation to steer the system to a desired target setpoint, which may potentially change online and can be unreachable by the system dynamics and constraints, similar to~\cite{limon2018nonlinear,koehler2020nonlinear}.
The cost function contains a tracking cost w.r.t.\ an artificial equilibrium as well as a penalty of 
%contains a standard tracking term of the predictions w.r.t. an artificial equilibrium as well as a penalty on 
the deviation between this artificial equilibrium and the desired target setpoint.
We prove that, if the weight matrix of this penalty is chosen sufficiently small, then the linearization-based prediction model is sufficiently accurate such that the closed loop converges to the optimal reachable equilibrium.
The latter is the closest possible equilibrium to the target setpoint, in case this setpoint is not an equilibrium of the system and / or does not satisfy the constraints.
Figure~\ref{fig:manifold} illustrates the main idea:
At time $t=0$, the open-loop predictions reach an (artificial) equilibrium output $y^{\rms*}(0)$ for the linearized system dynamics lying in a neighborhood of the nonlinear equilibrium manifold $\mathcal{Z}_{\rmy}^\rms$.
At time $t$, after repeated application of the proposed MPC scheme, the artificial equilibrium $y^{\rms*}(t)$ is closer to $y^{\rms\rmr}$, the optimal reachable equilibrium given the (potentially unreachable) setpoint reference $y^\rmr$.
The model used for prediction at time $t$ relies on the linearized system dynamics at the current state $x_t$.
By repeated application of the proposed MPC scheme, the artificial equilibrium $y^{\rms*}(t)$ slides along the manifold $\mathcal{Z}_{\rmy}^\rms$ towards $y^{\rms\rmr}$ and eventually, the closed-loop output converges to $y^{\rms\rmr}$.

\vspace{-10pt}

\begin{figure}[h!]
\begin{center}
\includegraphics[width=0.48\textwidth]{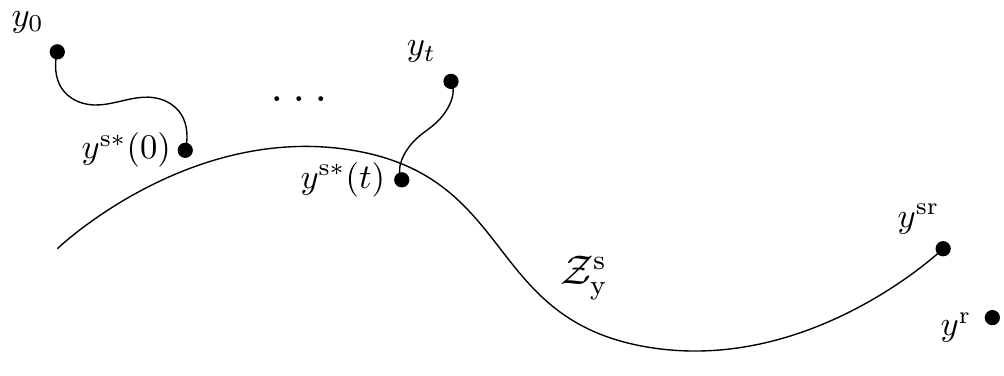}
\vspace{-11pt}
\end{center}
\caption{Scheme illustrating the basic idea of our approach.
The figure displays the output equilibrium manifold $\mathcal{Z}_{\rmy}^\rms$, the closed-loop output and artificial equilibrium at time $0$, the closed-loop output and artificial equilibrium at time $t$, the optimal reachable equilibrium $y^{\rms\rmr}$, and the setpoint reference $y^\rmr$.}
\label{fig:manifold}
\end{figure}

\vspace{-5pt}

The main contribution of this paper is to prove that the proposed MPC scheme exponentially stabilizes $y^{\rms\rmr}$ under reasonable assumptions on the linearized dynamics.
A key advantage of our scheme if compared to nonlinear tracking MPC schemes~\cite{limon2018nonlinear,koehler2020nonlinear} lies in its computational efficiency, since only a convex quadratic program (QP) needs to be solved online.
Moreover, the implementation only requires an accurate linear model around the steady-state manifold, which \newpage 
\noindent is easier to obtain than a complex nonlinear model.
Finally, this paper provides the basis for the results in our companion paper~\cite{berberich2021linearpart2} where we merge the idea of linearization-based tracking MPC with recent work on data-driven MPC~\cite{coulson2019deepc,berberich2021guarantees} to develop a data-driven MPC scheme for unknown nonlinear systems with closed-loop stability guarantees.

The real-time iteration scheme~\cite{diehl2005nominal,zanelli2021lyapunov} relies on a sequential approximation of the optimal control problem, resulting in predictions based on linear time-varying (LTV) models, and is thus conceptually similar to our work.
An LTV prediction model is also employed in~\cite{cannon2011robust}, where a tube around the previous candidate solution is used to reduce the linearization error.
The relation of our approach to existing LTV-based MPC methods for nonlinear systems will be discussed in more detail later in the paper.
Further related approaches are~\cite{lawrynczuk2007computationally,papadimitriou2020control} which estimate (approximate) LTV models of nonlinear systems from data and employ these in MPC schemes.
While these approaches lead to convex optimization problems which can be efficiently solved, they do not provide guarantees on closed-loop stability.
Another line of research to control nonlinear systems via MPC with linear prediction models relies on Koopman operator theory~\cite{korda2018linear}.
The key differences to our approach are that we exploit local linearity properties whereas the (infinite-dimensional) Koopman operator provides a globally valid linear model, and that we provide desirable closed-loop stability guarantees.

The paper is structured as follows.
In Section~\ref{sec:setting}, we state the problem setting and the key assumptions required for our theoretical results.
Next, we present our linearization-based MPC scheme for nonlinear systems in Section~\ref{sec:MPC}, and we prove theoretical properties such as recursive feasibility and closed-loop exponential stability.
In Section~\ref{sec:example}, we apply the scheme to a nonlinear continuous stirred tank reactor (CSTR).
Finally, Section~\ref{sec:conclusion} contains a summary and conclusion.

\subsubsection*{Notation}
Denote the set of all nonnegative integers by $\mathbb{I}_{\geq0}$ and the integers in the interval $[a,b]$ by $\mathbb{I}_{[a,b]}$.
We write $\lVert\cdot\rVert_2$ for the $2$-norm of a vector, or the induced $2$-norm if the argument is a matrix.
Moreover, for some positive definite $P=P^\top\succ0$, we define $\lVert x\rVert_P^2\coloneqq x^\top Px$.
The point-to-set distance w.r.t. a set $\mathcal{Z}$ is defined as $\lVert x\rVert_{\mathcal{Z}}\coloneqq\inf_{x'\in\mathcal{Z}}\lVert x'-x\rVert_2$.
We write $\lambda_{\min}(P)$ ($\lambda_{\max}(P)$) for the minimum (maximum) eigenvalue of $P$, and we define $\lambda_{\min}(P_1,P_2)\coloneqq\min\{\lambda_{\min}(P_1),\lambda_{\min}(P_2)\}$ for two symmetric matrices $P_1,P_2$, and similarly for $\lambda_{\max}(P_1,P_2)$.
Finally, $\sigma_{\min}(A)$ denotes the minimum singular value of a matrix $A$.
Throughout this paper, we use the inequalities
\begin{align}\label{eq:ab_ineq}
\lVert a+b\rVert_P^2&\leq2\lVert a\rVert_P^2+2\lVert b\rVert_P^2,\\
\label{eq:prop_proof_std_norm_ineq}
\lVert a\rVert_P^2-\lVert b\rVert_P^2&\leq\lVert a-b\rVert_P^2+2\lVert a-b\rVert_P\lVert b\rVert_P,
\end{align}
which hold for any vectors $a$, $b$ and matrix $P=P^\top\succ0$.

\section{Problem setup}\label{sec:setting}
In this paper, we consider discrete-time nonlinear systems of the form
\begin{align}\label{eq:nl_sys}
x_{t+1}&=f(x_t,u_t)=f_0(x_t)+Bu_t
\end{align}
with $f_0:\mathbb{R}^n\to\mathbb{R}^n$, $B\in\mathbb{R}^{n\times m}$, and output
\begin{align}\label{eq:sys_output}
y_t=h(x_t,u_t)=h_0(x_t)+Du_t
\end{align}
for some $h_0:\mathbb{R}^{n}\to\mathbb{R}^p$, $D\in\mathbb{R}^{p\times m}$.
We impose pointwise-in-time constraints on the input, i.e., $u_t\in\mathbb{U}$ for all $t\in\mathbb{I}_{\geq0}$, where the constraint set $\mathbb{U}$ is assumed to be compact.
Due to the inexact prediction model, including state constraints into our framework would necessitate additional robust constraint tightening methods, which is an interesting issue for future research.
%\JB{Including state constraints into our framework is not straightforward due to the inexact prediction model of the proposed MPC scheme, but it is an interesting issue for future research.}
%\JB{Further, while we consider discrete-time systems throughout the paper, developing analogous results in continuous time is another interesting research direction.}
%As we discuss in more detail later in the paper, the \JB{guaranteed region of attraction of the proposed method is a neighborhood of the steady-state manifold of the nonlinear system.}
%%As we discuss in more detail later in the paper, the proposed method ensures that the closed loop remains close to the steady-state manifold of the nonlinear system.
%Hence, if the steady-state manifold lies strictly in the interior of the constraints, then satisfaction of state constraints can be automatically guaranteed.
%\JB{While we consider discrete-time systems throughout the paper, extending our results to a continuous-time formulation is an interesting issue for future research.}

We propose a state-feedback MPC scheme to track a desired setpoint reference $y^\rmr\in\mathbb{R}^p$  with the nonlinear system~\eqref{eq:nl_sys}--\eqref{eq:sys_output}.
In contrast to existing works with this goal such as~\cite{limon2018nonlinear}, the prediction model relies on the dynamics linearized at the current state $x_t$. 
In the remainder of this section, we state key assumptions that are required to prove that the MPC scheme asymptotically tracks the desired output setpoint.

\subsection{Smoothness assumptions}
We assume that both $f$ and $h$ are continuously differentiable and hence, for some $\tilde{x}\in\mathbb{R}^n$, we can define
\begin{align}\label{eq:Lin}
\begin{split}
A_{\tilde{x}}&\coloneqq\frac{\partial f_0}{\partial x}\Big\rvert_{\tilde{x}},\>\>
e_{\tilde{x}}\coloneqq f_0(\tilde{x})-A_{\tilde{x}}\tilde{x},\\
C_{\tilde{x}}&\coloneqq\frac{\partial h_0}{\partial x}\Big\rvert_{\tilde{x}},\>\>
r_{\tilde{x}}\coloneqq h_0(\tilde{x})-C_{\tilde{x}}\tilde{x}.
\end{split}
\end{align}
We write $f_{\tilde{x}}(x,u)\coloneqq A_{\tilde{x}}x+Bu+e_{\tilde{x}}$ for the system dynamics linearized at $(x,u)=(\tilde{x},0)$, and $h_{\tilde{x}}(x,u)\coloneqq C_{\tilde{x}}x+Du+r_{\tilde{x}}$ for the output linearized at $(x,u)=(\tilde{x},0)$.
Note that, since System~\eqref{eq:nl_sys} is control-affine, linearizing the dynamics at time $t$ only requires the state $x_t$ and no knowledge of the input $u_t$, which is a crucial fact for the proposed MPC scheme and its theoretical analysis.
If $f$ or $h$ are not affine in $u$, then it can be readily enforced by defining a new, incremental input $\Delta u_k\coloneqq u_{k+1}-au_k$ for some $a\in\mathbb{R}$ with $|a|\leq 1$ (e.g., $a=1$ corresponds to a standard incremental input).
In this case, the exact ``prediction model'' $u_{k+1}=au_k+\Delta u_k$ still allows us to enforce hard constraints on $u$ as well as $\Delta u$ while ensuring that the system is control-affine.
The key idea of this paper relies on the fact that a nonlinear system can be approximated locally by its linearization, given that $f$ is suitably smooth.
Note that, by definition, it holds that
\begin{align}\label{eq:f_equals_f_lin}
f_x(x,u)=f(x,u)\>\>\text{and}\>\>h_x(x,u)=h(x,u).
\end{align}
That is, locally at the linearization point, the linearization is equal to the nonlinear function $f$.
This insight is important for the theoretical analysis provided in the remainder of this paper since it implies that the prediction error of the proposed MPC scheme, which uses a local linearization-based prediction model, is zero in the first time step.
Throughout this paper, we assume that all vector fields involved in the system dynamics are twice continuously differentiable.

\begin{assumption}\label{ass:C2}
(Smoothness)
The vector fields $f_0$ and $h_0$ are twice continuously differentiable.
\end{assumption}
Assumption~\ref{ass:C2} implies a useful quantitative bound on the difference between the nonlinear vector fields $f(x,u),h(x,u)$ and their linearizations $f_{\tilde{x}}(x,u),h_{\tilde{x}}(x,u)$ for $x\neq\tilde{x}$.
\begin{proposition}\label{prop:Lin}
If Assumption~\ref{ass:C2} holds, then for any compact set $X\subset\mathbb{R}^n$, there exist $c_X,c_{Xh} >0$ such that for any $x,\tilde{x}\in X,u\in\mathbb{U}$, it holds that
\begin{align}\label{eq:ass_Lin}
\lVert f(x,u)-f_{\tilde{x}}(x,u)\rVert_2&\leq c_X\lVert x-\tilde{x}\rVert_2^2,\\\label{eq:ass_Lin_output}
\lVert h(x,u)-h_{\tilde{x}}(x,u)\rVert_2&\leq c_{Xh}\lVert x-\tilde{x}\rVert_2^2.
\end{align}
\end{proposition}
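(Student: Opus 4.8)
The plan is to prove the two inequalities by a standard Taylor-with-remainder argument, exploiting the control-affine structure which makes the nonlinear part $u$-independent. First I would observe that, since $f(x,u)=f_0(x)+Bu$ and $f_{\tilde x}(x,u)=A_{\tilde x}x+Bu+e_{\tilde x}$, the linear term $Bu$ cancels exactly, so that
\begin{align}\label{eq:proof_prop_reduce}
f(x,u)-f_{\tilde x}(x,u)=f_0(x)-A_{\tilde x}x-e_{\tilde x}=f_0(x)-f_0(\tilde x)-A_{\tilde x}(x-\tilde x),
\end{align}
using the definition $e_{\tilde x}=f_0(\tilde x)-A_{\tilde x}\tilde x$ from \eqref{eq:Lin}. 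The right-hand side is precisely the first-order Taylor remainder of $f_0$ at $\tilde x$ evaluated at $x$. The analogous identity holds for $h$, with $h_0$, $C_{\tilde x}$, $r_{\tilde x}$ in place of $f_0$, $A_{\tilde x}$, $e_{\tilde x}$. Thus the $u$-dependence and the compactness of $\mathbb{U}$ drop out entirely, and the claim reduces to a purely state-space remainder bound for the $C^2$ maps $f_0$ and $h_0$ on the compact set $X$.

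Next I would bound this remainder componentwise. For each scalar component $f_{0,i}$ of $f_0$, Taylor's theorem with the integral (or Lagrange) form of the remainder gives, for $x,\tilde x\in X$,
\begin{align}\label{eq:proof_prop_taylor}
f_{0,i}(x)-f_{0,i}(\tilde x)-\nabla f_{0,i}(\tilde x)^\top(x-\tilde x)=\frac12 (x-\tilde x)^\top \nabla^2 f_{0,i}(\xi_i)(x-\tilde x)
\end{align}
for some $\xi_i$ on the segment $[\tilde x,x]\subset X$, provided $X$ is convex; if $X$ is not convex one first replaces $X$ by its convex hull $\mathrm{co}(X)$, which is still compact, and works there. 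Since $f_0\in C^2$, the entries of the Hessians $\nabla^2 f_{0,i}$ are continuous, hence bounded on the compact set $\mathrm{co}(X)$; let $M_f$ be a uniform bound on $\|\nabla^2 f_{0,i}(\cdot)\|_2$ over $i$ and over $\mathrm{co}(X)$. Then $|f_{0,i}(x)-f_{0,i}(\tilde x)-\nabla f_{0,i}(\tilde x)^\top(x-\tilde x)|\le \tfrac12 M_f\|x-\tilde x\|_2^2$, and summing the squares over the $n$ components yields $\|f(x,u)-f_{\tilde x}(x,u)\|_2\le \tfrac{\sqrt n}{2}M_f\|x-\tilde x\|_2^2$, so that \eqref{eq:ass_Lin} holds with $c_X:=\tfrac{\sqrt n}{2}M_f$. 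The output bound \eqref{eq:ass_Lin_output} follows identically with $c_{Xh}:=\tfrac{\sqrt p}{2}M_h$, where $M_h$ bounds the Hessians of the components of $h_0$ on $\mathrm{co}(X)$.

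There is no serious obstacle here; the only mild subtlety is ensuring that the segment joining $x$ and $\tilde x$ stays inside a set on which the Hessians are controlled, which is why I would pass to the convex hull at the outset (alternatively, one invokes that on any compact set a $C^1$ gradient map is Lipschitz, giving the quadratic remainder bound directly without a convexity detour). It is also worth noting explicitly that the constants $c_X$, $c_{Xh}$ depend on $X$ but are uniform in $u\in\mathbb{U}$ and in $x,\tilde x\in X$, which is exactly the form needed in the subsequent MPC analysis.
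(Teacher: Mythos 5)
Your proposal is correct and follows essentially the same route as the paper, which simply invokes Taylor's theorem in the multivariable case together with compactness; your write-up fills in the standard details (cancellation of the $Bu$ term, componentwise Lagrange remainder, Hessian bounds on the convex hull). The only refinement worth noting is your observation that compactness of $\mathbb{U}$ is not actually needed here since the input enters affinely and cancels exactly.
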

\begin{proof}
This follows directly from Taylor's Theorem in the multivariable case, together with compactness of $X$ and $\mathbb{U}$.
\end{proof}
For Proposition~\ref{prop:Lin}, it is crucial to consider compact sets $X,\mathbb{U}$, since the error bound is in general not necessarily satisfied globally.
We later show that a certain (compact) Lyapunov function sublevel set is invariant and can hence be used to define $X$.
Although using uniform constants $c_X,c_{Xh}$ in~\eqref{eq:ass_Lin} and~\eqref{eq:ass_Lin_output} over the set $X$ may be conservative, the error bound still becomes arbitrarily small if $x$ and $\tilde{x}$ are sufficiently close to each other.
Furthermore, Assumption~\ref{ass:C2} implies that $f_0$ and $h_0$ are locally Lipschitz continuous in $x$, i.e., for any $c_{\mathrm{Lip}}>0$ and any compact set $X$ there exists a constant $L_{f}\geq0$ such that for any $x,\tilde{x}\in X,u\in\mathbb{U}$ satisfying $\lVert x-\tilde{x}\rVert_2\leq c_{\text{Lip}}$ it holds that
\begin{align}\label{eq:ass_Lip}
\lVert f(x,u)-f(\tilde{x},u)\rVert_2\leq L_{f}\lVert x-\tilde{x}\rVert_2,
\end{align}
and similarly for $h$.
Using Assumption~\ref{ass:C2}, we can derive the following bound between two linear models obtained by linearizations at two different points $x,\tilde{x}$ when evaluated at two further points $x_{\rma},x_{\rmb}$:
For any constant $c_{\mathrm{Lip}}>0$ and any compact set $X$, there exist $L_f\geq0$ and $c_X>0$ such that
\begin{align}\label{eq:diff_lin}
&\lVert f_{x}(x_{\rma},u)-f_{\tilde{x}}(x_{\rmb},u)\rVert_2\\\nonumber
\leq&\lVert f(x_{\rma},u)-f_{x}(x_{\rma},u)\rVert_2+\lVert f(x_{\rmb},u)-f_{\tilde{x}}(x_{\rmb},u)\rVert_2\\\nonumber
&+\lVert f(x_{\rma},u)-f(x_{\rmb},u)\rVert_2\\\nonumber
\stackrel{\eqref{eq:ass_Lin},\eqref{eq:ass_Lip}}{\leq}&c_{X}\lVert x_{\rma}-x\rVert_2^2+c_{X}\lVert x_{\rmb}-\tilde{x}\rVert_2^2+L_f\lVert x_{\rma}-x_{\rmb}\rVert_2,
\end{align}
for all $x,x_{\rma},\tilde{x},x_{\rmb}\in X,u\in\mathbb{U}$ with $\lVert x_{\rma}-x_{\rmb}\rVert_2\leq c_{\mathrm{Lip}}$.

\subsection{Steady-state manifold}
The control goal is to steer the nonlinear system~\eqref{eq:nl_sys} to a desired target setpoint, i.e., to track a user-specified output $y^\rmr\in\mathbb{R}^p$.
Since the output $y$ may depend on the input $u$, this also allows us to consider input setpoints.
Let us now define the set of all feasible steady-states
\begin{align}
\mathcal{Z}^\rms\coloneqq\{(x^\rms,u^\rms)\in\mathbb{R}^{n}\times\mathbb{U}^\rms\mid x^\rms=f(x^\rms,u^\rms)\}
\end{align}
with some (user-chosen) convex and compact set $\mathbb{U}^\rms\subseteq\text{int}\left(\mathbb{U}\right)$, which is required for a local controllability argument in our proofs.
Further, we denote the projection of $\mathcal{Z}^\rms$ on the state component by $\mathcal{Z}^\rms_x$ and the projection of $\mathcal{Z}^\rms$ on the output as
\begin{align*}
\mathcal{Z}_{\rmy}^\rms\coloneqq\{y^\rms\in\mathbb{R}^p\mid \exists(x^\rms,u^\rms)\in\mathcal{Z}^\rms:y^\rms=h(x^\rms,u^\rms)\}.
\end{align*}
The optimal equilibrium cost is defined as
\begin{align}\label{eq:opt_equil_prob}
J_\rmeq^*\coloneqq\min_{y^\rms\in\mathcal{Z}_{\rmy}^\rms}&\lVert y^\rms-y^\rmr\rVert_S^2
\end{align}
for some $S\succ0$.
We denote a minimizer of~\eqref{eq:opt_equil_prob} by $y^{\rms\rmr}$.
%Throughout this paper, we assume that the minimizer of~\eqref{eq:opt_equil_prob}, i.e., the optimal reachable equilibrium $y^{\rms\rmr}$, is unique.
In Section~\ref{sec:MPC_stab}, we provide sufficient conditions under which this minimizer is unique. %is the case.
Let us define the set of equilibria of the linearized system at some state $\tilde{x}$ as
\begin{align*}
\mathcal{Z}^\rms_\rmlin(\tilde{x})\coloneqq\big\{(x^\rms,u^\rms)\in\mathbb{R}^n\times\mathbb{U}^\rms\mid x^\rms=A_{\tilde{x}}x^\rms+Bu^\rms+e_{\tilde{x}}\}
\end{align*}
and the projection on the output as
\begin{align*}
\mathcal{Z}_{\rmy,\rmlin}^\rms(\tilde{x})\coloneqq\big\{y^\rms\in\mathbb{R}^p\mid\exists(&x^\rms,u^\rms)\in\mathcal{Z}^\rms_\rmlin(\tilde{x}):\\
&y^\rms=C_{\tilde{x}}x^\rms+Du^\rms+r_{\tilde{x}}\big\}.
\end{align*}
The optimal reachable equilibrium of the linearized system at $\tilde{x}$ is the minimizer of
\begin{align}\label{eq:opt_equil_prob_lin}
J_{\rmeq,\rmlin}^*(\tilde{x})\coloneqq\min_{y^\rms\in\mathcal{Z}_{\rmy,\rmlin}^\rms(\tilde{x})}\>\>&\lVert y^\rms-y^\rmr\rVert_S^2.
\end{align}
We denote the minimizer of~\eqref{eq:opt_equil_prob_lin} by $y^{\rms\rmr}_{\rmlin}(\tilde{x})$.
It follows from $S\succ0$ that Problem~\eqref{eq:opt_equil_prob_lin} is strongly convex at any linearization point, i.e., for any $\tilde{x}\in\mathbb{R}^n$, $y^\rms\in\mathcal{Z}_{\rmy,\rmlin}^\rms(\tilde{x})$, it holds that
\begin{align}\label{eq:ass_strongly_convex}
\lVert y^\rms-y^\rmr\rVert_S^2-J_{\mathrm{eq},\rmlin}^*(\tilde{x})\geq \lVert y^\rms-y^{\rms\rmr}_{\rmlin}(\tilde{x})\rVert_S^2,
\end{align}
compare~\cite[Inequality (11)]{koehler2020nonlinear}.
\begin{assumption}\label{ass:unique_steady_state}
(Unique steady-state)
There exists $\sigma_\rms>0$ such that, for any $\tilde{x}\in\mathbb{R}^n$,
\begin{align}\label{eq:ass_unique_steady_state}
\sigma_{\min}\left(\begin{bmatrix}A_{\tilde{x}}-I&B\\C_{\tilde{x}}&D\end{bmatrix}\right)\geq\sigma_\rms.
\end{align}
\end{assumption}
Assumption~\ref{ass:unique_steady_state} implies that $\begin{bmatrix}A_{\tilde{x}}-I&B\\C_{\tilde{x}}&D\end{bmatrix}$ has full column rank, which is a standard condition in tracking, compare~\cite[Lemma~1.8]{rawlings2020model}, \cite[Remark~1]{limon2018nonlinear}.
This condition means that for any steady-state output of the linearized system, the corresponding input-state pair is unique.
More precisely, for any $\tilde{x}\in \mathbb{R}^n$, there exists a linear map $\hat{g}_{\tilde{x}}:\mathcal{Z}_{\rmy,\rmlin}^\rms(\tilde{x})\to\mathcal{Z}^\rms_\rmlin(\tilde{x})$ such that
\begin{align}\label{eq:ass_unique_steady_state_maps}
\hat{g}_{\tilde{x}}(y^\rms)=(x^\rms,u^\rms),
\end{align}
where $(x^\rms,u^\rms)\in\mathcal{Z}^\rms_\rmlin(\tilde{x})$ is the steady-state corresponding to $y^\rms$, i.e., $y^\rms=h_{\tilde{x}}(x^\rms,u^\rms)$.
Due to the uniform lower bound~\eqref{eq:ass_unique_steady_state}, the map $\hat{g}_{\tilde{x}}$ is uniformly Lipschitz continuous for all $\tilde{x}\in\mathbb{R}^n$.
Further, the condition~\eqref{eq:ass_unique_steady_state} implies that the linearized dynamics have no transmission zeros at~$1$~\cite[Ass.~1]{magni2005solution} and that the number of outputs $p$ is greater than or equal to the number of inputs $m$.
Finally, by a global version of the inverse function theorem~\cite[Condition (1.1)]{radulescu1980global}, Assumption~\ref{ass:unique_steady_state} implies the existence of 
%Finally, under the above rank condition, there exists 
a unique equilibrium $(x^\rms,u^\rms)\in\mathcal{Z}^\rms$ for the \emph{nonlinear} system for any given output equilibrium $y^\rms\in\mathcal{Z}_{\rmy}^\rms$, compare also~\cite[Remark~1]{limon2018nonlinear}.
Throughout the paper, we write
\begin{align}
(x^{\rms\rmr}_{\rmlin}(\tilde{x}),u^{\rms\rmr}_{\rmlin}(\tilde{x}))\coloneqq \hat{g}_{\tilde{x}}(y^{\rms\rmr}_{\rmlin}(\tilde{x}))
\end{align}
for the unique input-state pair corresponding to the minimizer $y^{\rms\rmr}_{\rmlin}(\tilde{x})$ of~\eqref{eq:opt_equil_prob_lin}, and we write $(x^{\rms\rmr},u^{\rms\rmr})$ for the input-state pair corresponding to $y^{\rms\rmr}$, i.e.,
\begin{align}\label{eq:x_sr_u_sr_def}
(x^{\rms\rmr},u^{\rms\rmr})=\hat{g}_{x^{\rms\rmr}}(y^{\rms\rmr}).
\end{align}
Moreover, we require that the linearized dynamics are uniformly controllable.
\begin{assumption}\label{ass:ctrb}
(Controllability)
The pair $(A_{\tilde{x}},B)$ is uniformly controllable for all $\tilde{x}\in\mathbb{R}^n$, i.e., the minimum singular value of $\begin{bmatrix}B&\dots&A^{n-1}_{\tilde{x}}B\end{bmatrix}$ is uniformly lower bounded.
\end{assumption}
Using standard arguments from linear systems theory, compare~\cite[Theorem 5]{sontag1998mathematical}, it can be shown that Assumption~\ref{ass:ctrb} %directly 
implies the existence of a constant $\Gamma>0$ such that for any $\tilde{x}\in\mathbb{R}^n,(x^\rms,u^\rms)\in\mathcal{Z}^\rms_\rmlin(\tilde{x})$, and any initial condition $x_0$, there exists an input trajectory $\hat{u}_k\in\mathbb{R}^m$, $k\in\mathbb{I}_{[0,n-1]}$,
%$\{\hat{u}_k\}_{k=0}^{n-1}$} 
steering the linearized system from $x_0$ to $x^\rms$, i.e, $\hat{x}_0=x_0,\hat{x}_{k+1}=f_{\tilde{x}}(\hat{x}_k,\hat{u}_k),\hat{x}_n=x^\rms$, while satisfying
\begin{align}\label{eq:ass_ctrb}
\sum_{k=0}^{n-1}\left\lVert \hat{x}_k-x^\rms\right\rVert_2+\lVert \hat{u}_k-u^\rms\rVert_2
\leq \Gamma\lVert x^\rms-x_0\rVert_2.
\end{align}
Moreover, the following assumption is required for the linearized system dynamics at any state.

\begin{assumption}\label{ass:exist_equil_state}
(Non-singular dynamics)
There exists $\underline{\sigma}>0$ such that $\sigma_{\min}(I-A_{\tilde{x}})\geq\underline{\sigma}$ for any $\tilde{x}\in \mathbb{R}^n$.
\end{assumption}

Assumption~\ref{ass:exist_equil_state} implies that, for any $\tilde{x}\in \mathbb{R}^n$, the matrix $I-A_{\tilde{x}}$ has full rank and hence, for any equilibrium input $u^\rms$ there exists a unique equilibrium state $x^\rms$ such that
\begin{align*}
x^\rms=A_{\tilde{x}}x^\rms+Bu^\rms+e_{\tilde{x}}.
\end{align*}
%
%\JB{
%Note that Assumption~\ref{ass:exist_equil_state} does not hold for integrator dynamics $u_{k+1}=u_k+\Delta u_k$, which result from defining an incremental input to render the nonlinear dynamics~\eqref{eq:nl_sys} control-affine.
%Nevertheless, if the remaining dynamics satisfy Assumption~\ref{ass:exist_equil_state}, then for any given $(u^\rms,\Delta u^\rms)$ there still exists a unique steady-state $x^\rms$, which suffices for our theoretical results.}
In case that $\mathbb{U}=\mathbb{R}^m$, it is straightforward to relax Assumption~\ref{ass:exist_equil_state} by requiring that there exists a state-feedback gain $K$ such that $A_K=A_{\tilde{x}}+BK$ satisfies the non-singularity condition $\sigma_{\min}(I-A_K)\geq\underline{\sigma}$ for some $\underline{\sigma}$ and for all $\tilde{x}\in\mathbb{R}^n$.
We conjecture that it is possible to relax Assumption~\ref{ass:exist_equil_state} further at the price of a more involved analysis.

\begin{remark}
Note that the conditions in Assumptions~\ref{ass:unique_steady_state}--\ref{ass:exist_equil_state} are imposed for all $\tilde{x}\in\bbr^n$.
This is mainly done for notational convenience.
As will become clear in our theoretical results, it actually suffices if these assumptions hold for all $\tilde{x}$ in a suitably defined compact set depending on the (positively invariant) sublevel set of the Lyapunov function used to prove stability.
\end{remark}

Finally, we make an additional assumption on the steady-state manifold of the linearized dynamics.
\begin{assumption}\label{ass:steady_state_compact}
(Compact steady-state manifold)
There exists a compact set $\mathcal{B}$ such that $\mathcal{Z}^\rms_\rmlin(\tilde{x})\subseteq\mathcal{B}$ for all $\tilde{x}\in\mathbb{R}^n$.
\end{assumption}
Assumption~\ref{ass:steady_state_compact} means that the union of all steady-state manifolds for the linearized dynamics at any point is contained in a compact set.
%Loosely speaking, i
If the input equilibrium constraints $\mathbb{U}^\rms$ are compact and Assumption~\ref{ass:exist_equil_state} holds, then this is satisfied if 
%the system~\eqref{eq:nl_sys} is not too strongly nonlinear, i.e., 
the Jacobian is uniformly bounded.
Assumption~\ref{ass:steady_state_compact} is required for our theoretical results to obtain a uniform bound on the optimal equilibrium cost~\eqref{eq:opt_equil_prob_lin} and to conclude compactness of certain Lyapunov function sublevel sets.
The assumption can be dropped if it is known that the closed-loop trajectories lie within a compact invariant subset of the state-space.

\section{Linear tracking MPC for nonlinear systems}\label{sec:MPC}
In this section, we propose a linear tracking MPC scheme to steer the nonlinear system~\eqref{eq:nl_sys}--\eqref{eq:sys_output} to a desired target setpoint.
The key idea is to use a local linearization-based model of the nonlinear system for prediction, and to update the linearization online using the current measurements.
After stating the scheme in Section~\ref{sec:MPC_scheme}, we prove lower and upper bounds on the optimal value function of the MPC problem in Section~\ref{sec:MPC_value}.
Further, a useful contraction property of the Lyapunov function is stated in Section~\ref{sec:MPC_feas2}.
Section~\ref{sec:MPC_stab} contains the main result on closed-loop exponential stability of the optimal reachable equilibrium.
%Our theoretical results hold true for systems of the form~\eqref{eq:nl_sys}--\eqref{eq:sys_output} with arbitrary nonlinear dynamics (as long as they satisfy the given assumptions), provided that a certain design parameter in the cost is tuned appropriately.

\subsection{MPC scheme}\label{sec:MPC_scheme}
Given the current state $x_t$ at time $t$ as well as the linearization of the nonlinear system at $x_t$ according to~\eqref{eq:Lin}, the following optimal control problem will be the basis for our proposed MPC scheme:
\begin{subequations}\label{eq:MPC}
\begin{align}
\underset{\substack{\bar{x}(t),\bar{u}(t)\\x^\rms(t),u^\rms(t)\\y^\rms(t)}}{\min}\>\>&\sum_{k=0}^{N-1} \lVert \bar{x}_k(t)-x^\rms(t)\rVert_Q^2+\lVert\bar{u}_k(t)-u^\rms(t)\rVert_R^2\\\nonumber
&\,\>\,+\lVert y^\rms(t)-y^\rmr\rVert_S^2\\\label{eq:MPC1}
\text{s.t.}\quad&\bar{x}_{k+1}(t)=A_{x_t}\bar{x}_k(t)+B\bar{u}_k(t)+e_{x_t},\\\label{eq:MPC2}
&\bar{x}_0(t)=x_t,\>\>\bar{x}_N(t)=x^\rms(t),\\\label{eq:MPC3}
&\bar{u}_k(t)\in\mathbb{U},\>\>k\in\mathbb{I}_{[0,N-1]},\\
\label{eq:MPC4}
&(x^\rms(t),u^\rms(t))\in\mathcal{Z}^\rms_\rmlin(x_t),\\\label{eq:MPC5}
&y^\rms(t)=C_{x_t}x^\rms(t)+Du^\rms(t)+r_{x_t}.
\end{align}
\end{subequations}
Here, $\bar{x}(t)\in\mathbb{R}^{nN}$ and $\bar{u}(t)\in\mathbb{R}^{mN}$ denote the predicted state and input trajectory at time $t$, taking the value $\bar{x}_k(t)\in\mathbb{R}^n$ and $\bar{u}_k(t)\in\mathbb{R}^m$ at the $k$-th (predicted) time step, respectively.
Compared to a standard linear MPC scheme with terminal equality constraints (compare~\cite{rawlings2020model}), Problem~\eqref{eq:MPC} has two additional ingredients.
First, the present scheme contains an artificial setpoint $(x^\rms(t),u^\rms(t))$ which is optimized online and whose distance w.r.t. the desired target setpoint $y^\rmr$ is penalized in the cost, similar to the tracking MPC formulation in~\cite{limon2018nonlinear}.
If compared to classical MPC schemes with terminal equality constraints~\cite{rawlings2020model}, such an artificial steady-state increases the region of attraction and leads to recursive feasibility despite online setpoint changes~\cite{limon2018nonlinear}.
Further, the prediction of future trajectories of the present nonlinear system is not based on the full nonlinear model~\eqref{eq:nl_sys}, but instead on a local linearization around the current state $x_t$.
Therefore, also the artificial setpoint $x^\rms(t)$ is an equilibrium for the linearized dynamics according to~\eqref{eq:MPC4}, but not necessarily for the nonlinear system, and the artificial output setpoint $y^\rms(t)$ satisfies the linearized output equation.
%Throughout the paper, we assume that the prediction horizon satisfies $N\geq n$.
%We denote the optimal solution of~\eqref{eq:MPC} by $\bar{x}^*(t),\bar{u}^*(t),x^{\rms*}(t),u^{\rms*}(t),y^{\rms*}(t)$ and the optimal cost by $J_N^*(x_t)$.

We assume that the cost matrices in~\eqref{eq:MPC} are positive definite, i.e., $Q,R\succ0$.
Under the given assumptions, it can be shown analogously to~\cite[Proposition 1]{berberich2021linearpart2} that the optimal solution of~\eqref{eq:MPC} is unique.
We denote this optimal solution by $\bar{x}^*(t),\bar{u}^*(t),x^{\rms*}(t),u^{\rms*}(t),y^{\rms*}(t)$ and the corresponding optimal cost by $J_N^*(x_t)$.
Our results can be extended to positive semidefinite state weightings $Q\succeq0$ by invoking an input-output-to-state stability argument (compare~\cite{berberich2020tracking,cai2008input}).
If $\mathbb{U}$, $\mathbb{U}^\rms$ are polytopic, then Problem~\eqref{eq:MPC} is a convex QP and can be solved efficiently.
On the contrary, solving the non-convex problems associated with nonlinear MPC to optimality is in general computationally intractable.
It is worth noting that the computational complexity of the proposed MPC approach is also smaller than that of alternative linearization-based approaches such as the real-time iteration scheme~\cite{diehl2005nominal,liao2020time,zanelli2021lyapunov} since i) only the linearization w.r.t.\ $x_t$ instead of the previously optimal solution $\bar{x}^*(t-1)$ is needed and ii) comparable stability guarantees of the real-time iteration scheme require a sufficiently small sampling time~\cite{zanelli2021lyapunov}, i.e., solving the underlying optimization problem more frequently~\cite{liao2020time}, cf. Remark~\ref{rk:RTI}.

\begin{algorithm}
\begin{Algorithm}\label{alg:MPC}
\normalfont{\textbf{$n$-step tracking MPC Scheme}}
\begin{enumerate}
\item At time $t$, take the current state measurement $x_t$ and compute $A_{x_t},e_{x_t},C_{x_t},D,r_{x_t}$ according to~\eqref{eq:Lin}.
\item Solve~\eqref{eq:MPC} and apply the first $n$ input components $u_{t+k}=\bar{u}_{k}^*(t)$, $k\in\mathbb{I}_{[0,n-1]}$.
\item Set $t=t+n$ and go back to 1).
\end{enumerate}
\end{Algorithm}
\end{algorithm}

In this paper, we 
consider the $n$-step MPC scheme outlined in Algorithm~\ref{alg:MPC}.
We employ a multi-step MPC scheme (compare~\cite{gruene2015robustness,worthmann2017interaction}) due to the joint occurrence of a model mismatch (induced by the linearized model) and terminal equality constraints.
More precisely, the candidate solution used in our theoretical analysis is defined based on the shifted previously optimal solution with an appended deadbeat controller, requiring $n$ additional time steps, i.e., a multi-step MPC scheme with at least $n$ steps, cf.~\cite[Figure 1]{berberich2021guarantees}.
The same theoretical guarantees can be given for a $\nu$-step MPC scheme, where $\nu\leq n$ denotes the controllability index, provided that Assumption~\ref{ass:ctrb} is modified accordingly.
%One can establish similar stability guarantees for the corresponding one-step MPC scheme, albeit under significantly stronger assumptions on controllability (Assumption~\ref{ass:ctrb}), i.e., if~\eqref{eq:ass_ctrb} holds for a sufficiently small $\Gamma$.

Clearly, the prediction model in the proposed MPC scheme is not exact due to the linearization of the nonlinear system.
As we will see in the remainder of this paper, by suitably tuning the cost parameters ($S$ needs to be small) and when starting close to the steady-state manifold $\mathcal{Z}^\rms$ of the nonlinear system, the artificial steady-state $x^\rms(t)$ is always close to the current state $x_t$ such that the prediction error is sufficiently small.
Then, $x^\rms(t)$ remains close to the nonlinear steady-state manifold and slowly drifts towards the optimal reachable equilibrium $x^{\rms\rmr}$ such that, asymptotically, the closed-loop state trajectory converges to $x^{\rms\rmr}$, compare Figure~\ref{fig:manifold}.

\subsection{Value function bound}\label{sec:MPC_value}

In order to prove closed-loop exponential stability, we consider a Lyapunov function candidate of the form 
\begin{align*}
V(x_t)\coloneqq J_N^*(x_t)-J_{\mathrm{eq},\rmlin}^*(x_t)
\end{align*}
with $J_{\mathrm{eq},\rmlin}^*(x_t)$ as in~\eqref{eq:opt_equil_prob_lin}.
The following result shows that $V$ admits suitable quadratic lower and upper bounds, which will be required to prove desired stability properties.

\begin{lemma}\label{lem:value_fcn_bound}
Suppose $N\geq n$ and Assumptions~\ref{ass:C2},~\ref{ass:unique_steady_state}, and~\ref{ass:ctrb} hold.
For any compact set $X\subset\mathbb{R}^n$, there exist $c_\rml,c_\rmu,\delta>0$ such that
\begin{itemize}
\item[(i)] for all $x_t\in X$, the function $V$ is lower bounded as
\begin{align}\label{eq:lem_value_fcn_lower_bound}
V(x_t)\geq c_\rml\lVert x_t-x^{\rms\rmr}_{\rmlin}(x_t)\rVert_2^2,
\end{align}
\item[(ii)] for all $x_t\in X$ with $\lVert x_t-x^{\rms\rmr}_{\rmlin}(x_t)\rVert_2\leq\delta$, the function $V$ is upper bounded as
\begin{align}\label{eq:lem_value_fcn_upper_bound}
V(x_t)\leq c_\rmu\lVert x_t-x^{\rms\rmr}_{\rmlin}(x_t)\rVert_2^2.
\end{align}
\end{itemize}
\end{lemma}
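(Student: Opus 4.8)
The plan is to establish the two bounds separately, both reducing to the analysis of the tracking cost relative to the linearized equilibrium problem at the fixed linearization point $\tilde x = x_t$.

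For the lower bound (i), I would argue as follows. Since the prediction model in~\eqref{eq:MPC} is \emph{exactly} the linearized system at $x_t$, the MPC problem~\eqref{eq:MPC} is a genuine linear tracking MPC problem with terminal equality constraint, for which the optimal value $J_N^*(x_t)$ is a well-defined convex function. First I would drop all stage-cost terms (they are nonnegative) to obtain $J_N^*(x_t)\ge \lVert y^{\rms*}(t)-y^\rmr\rVert_S^2$, where $y^{\rms*}(t)\in\mathcal{Z}_{\rmy,\rmlin}^\rms(x_t)$. Then, subtracting $J_{\mathrm{eq},\rmlin}^*(x_t)$ and invoking the strong convexity inequality~\eqref{eq:ass_strongly_convex} with $y^\rms = y^{\rms*}(t)$ gives $V(x_t)\ge \lVert y^{\rms*}(t)-y^{\rms\rmr}_{\rmlin}(x_t)\rVert_S^2$. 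This alone is not enough; I also need to account for the distance of $x_t$ from the equilibrium. Here I would additionally keep (part of) the stage cost: for the terminal-constrained problem, $\bar x_0(t)=x_t$ and $\bar x_N(t)=x^\rms(t)$, so the $N$ stage terms $\sum_k \lVert\bar x_k(t)-x^\rms(t)\rVert_Q^2$ together with the constraint structure lower-bound (via a controllability/observability-type argument, or simply via $\lVert\bar x_0(t)-x^\rms(t)\rVert_Q^2$ being one of the summands) a quantity proportional to $\lVert x_t - x^{\rms*}(t)\rVert_2^2$. Combining $\lVert x_t - x^{\rms*}(t)\rVert_2^2$ with $\lVert y^{\rms*}(t)-y^{\rms\rmr}_{\rmlin}(x_t)\rVert_S^2$, using that $\hat g_{x_t}$ is uniformly Lipschitz (from Assumption~\ref{ass:unique_steady_state}) so that $\lVert x^{\rms*}(t)-x^{\rms\rmr}_{\rmlin}(x_t)\rVert_2 \le L\lVert y^{\rms*}(t)-y^{\rms\rmr}_{\rmlin}(x_t)\rVert_2$, and a triangle inequality $\lVert x_t - x^{\rms\rmr}_{\rmlin}(x_t)\rVert_2 \le \lVert x_t - x^{\rms*}(t)\rVert_2 + \lVert x^{\rms*}(t)-x^{\rms\rmr}_{\rmlin}(x_t)\rVert_2$, yields $V(x_t)\ge c_\rml\lVert x_t-x^{\rms\rmr}_{\rmlin}(x_t)\rVert_2^2$ with a uniform $c_\rml>0$ over the compact set $X$. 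The uniformity of all constants over $X$ follows from the uniform bounds in Assumptions~\ref{ass:unique_steady_state} and~\ref{ass:ctrb} (and compactness of $X$, $\mathbb{U}$).

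For the upper bound (ii), the standard strategy is to exhibit a feasible candidate for~\eqref{eq:MPC} whose cost is $O(\lVert x_t - x^{\rms\rmr}_{\rmlin}(x_t)\rVert_2^2)$. I would take as artificial setpoint $(x^\rms(t),u^\rms(t),y^\rms(t)) = (x^{\rms\rmr}_{\rmlin}(x_t),u^{\rms\rmr}_{\rmlin}(x_t),y^{\rms\rmr}_{\rmlin}(x_t))$, which lies in $\mathcal{Z}^\rms_\rmlin(x_t)$ by definition, with setpoint-tracking cost exactly $J_{\mathrm{eq},\rmlin}^*(x_t)$. For the state/input trajectory, I would use the controllability consequence~\eqref{eq:ass_ctrb} of Assumption~\ref{ass:ctrb}: there exists an input steering the linearized system from $x_t$ to $x^{\rms\rmr}_{\rmlin}(x_t)$ in $n\le N$ steps with $\sum_{k=0}^{n-1}\lVert\hat x_k - x^\rms\rVert_2 + \lVert\hat u_k - u^\rms\rVert_2 \le \Gamma\lVert x^\rms - x_t\rVert_2$; for the remaining $N-n$ steps, stay at the equilibrium $(x^{\rms\rmr}_{\rmlin}(x_t),u^{\rms\rmr}_{\rmlin}(x_t))$ (contributing zero stage cost). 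This candidate is feasible \emph{provided} $u^{\rms\rmr}_{\rmlin}(x_t)\in\mathbb{U}^\rms\subseteq\mathrm{int}(\mathbb{U})$ — which holds — and the transient inputs $\hat u_k$ stay in $\mathbb{U}$; since $\mathbb{U}^\rms\subseteq\mathrm{int}(\mathbb{U})$ and~\eqref{eq:ass_ctrb} bounds the deviation by $\Gamma\lVert x^{\rms\rmr}_{\rmlin}(x_t) - x_t\rVert_2$, this is where the smallness condition $\lVert x_t - x^{\rms\rmr}_{\rmlin}(x_t)\rVert_2\le\delta$ is needed, with $\delta$ chosen (uniformly over $X$) so that the $\Gamma\delta$-ball around $\mathbb{U}^\rms$ stays inside $\mathbb{U}$. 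The resulting cost is at most $(\lambda_{\max}(Q,R)\Gamma^2 + 0)\lVert x_t - x^{\rms\rmr}_{\rmlin}(x_t)\rVert_2^2 + J_{\mathrm{eq},\rmlin}^*(x_t)$, so $V(x_t) = J_N^*(x_t) - J_{\mathrm{eq},\rmlin}^*(x_t) \le c_\rmu\lVert x_t - x^{\rms\rmr}_{\rmlin}(x_t)\rVert_2^2$ with $c_\rmu$ uniform over $X$.

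I expect the main obstacle to be the lower bound (i): getting a \emph{clean} lower bound proportional to $\lVert x_t - x^{\rms\rmr}_{\rmlin}(x_t)\rVert_2^2$ rather than only to $\lVert y^{\rms*}(t) - y^{\rms\rmr}_{\rmlin}(x_t)\rVert_S^2$ requires carefully combining the setpoint-penalty contribution with the stage-cost contribution, and arguing that the two together control the full distance; one has to handle the case where $x_t$ is far from the equilibrium manifold but $y^{\rms*}(t)$ happens to be close to $y^{\rms\rmr}_{\rmlin}(x_t)$, which is exactly where the stage cost $\sum_k \lVert \bar x_k(t) - x^\rms(t)\rVert_Q^2$ (bounded below in terms of $\lVert x_t - x^{\rms*}(t)\rVert_2^2$ since $\bar x_0(t)=x_t$) must be invoked. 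Ensuring all constants $c_\rml, c_\rmu, \delta$ are uniform over the compact set $X$ is routine given the uniform assumptions, but should be stated explicitly. The upper bound (ii) is comparatively straightforward, essentially a direct application of~\eqref{eq:ass_ctrb} plus the interiority $\mathbb{U}^\rms\subseteq\mathrm{int}(\mathbb{U})$.
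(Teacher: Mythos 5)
Your proposal is correct and follows essentially the same route as the paper's proof: for the lower bound, keeping the $k=0$ stage term $\lVert x_t-x^{\rms*}(t)\rVert_Q^2$ together with the setpoint penalty, applying the strong convexity inequality~\eqref{eq:ass_strongly_convex} and the uniform Lipschitz property of $\hat{g}_{x_t}$ from Assumption~\ref{ass:unique_steady_state}, and combining via~\eqref{eq:ab_ineq}; for the upper bound, the candidate built from $(x^{\rms\rmr}_{\rmlin}(x_t),u^{\rms\rmr}_{\rmlin}(x_t),y^{\rms\rmr}_{\rmlin}(x_t))$ and the controllability bound~\eqref{eq:ass_ctrb}, with $\delta$ ensuring input feasibility via $\mathbb{U}^\rms\subseteq\mathrm{int}(\mathbb{U})$. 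No gaps.
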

\begin{proof}
\textbf{(i) Lower bound}\\
Note that
\begin{align*}
&V(x_t)=J_N^*(x_t)-J_{\mathrm{eq},\rmlin}^*(x_t)\\
&\geq \lVert x_t-x^{\rms*}(t)\rVert_Q^2+\lVert y^{\rms*}(t)-y^\rmr\rVert_S^2-J_{\mathrm{eq},\rmlin}^*(x_t)\\
&\stackrel{\eqref{eq:ass_strongly_convex}}{\geq}\lVert x_t-x^{\rms*}(t)\rVert_Q^2+\lambda_{\min}(S)\lVert y^{\rms*}(t)-y^{\rms\rmr}_{\rmlin}(x_t)\rVert_2^2.
\end{align*}
Assumption~\ref{ass:unique_steady_state} (i.e., the maps in~\eqref{eq:ass_unique_steady_state_maps}) implies the existence of a constant $\hat{c}_\rml>0$, which can be chosen uniformly over $x_t$, such that 
\begin{align*}
\lVert y^{\rms*}(t)-y^{\rms\rmr}_{\rmlin}(x_t)\rVert_2^2\geq \hat{c}_\rml\lVert x^{\rms*}(t)-x^{\rms\rmr}_{\rmlin}(x_t)\rVert_2^2.
\end{align*}
Hence, we obtain
\begin{align*}
V(x_t)&\geq \lVert x_t-x^{\rms*}(t)\rVert_Q^2+\hat{c}_\rml\lambda_{\min}(S)\lVert x^{\rms*}(t)-x^{\rms\rmr}_{\rmlin}(x_t)\rVert_2^2\\
&\stackrel{\eqref{eq:ab_ineq}}{\geq}\underbrace{\frac{\min\{\lambda_{\min}(Q),\hat{c}_\rml\lambda_{\min}(S)\}}{2}}_{c_\rml\coloneqq}\lVert x_t-x^{\rms\rmr}_{\rmlin}(x_t)\rVert_2^2.
\end{align*}
\\
\textbf{(ii) Upper bound}\\
In the following, we construct a candidate solution to Problem~\eqref{eq:MPC} which will then be used to bound the optimal cost $J_N^*(x_t)$.
We choose the candidate equilibrium as $x^\rms(t)=x^{\rms\rmr}_{\rmlin}(x_t),u^\rms(t)=u^{\rms\rmr}_{\rmlin}(x_t),y^\rms(t)=y^{\rms\rmr}_{\rmlin}(x_t)$, i.e., the optimal reachable equilibrium of the system linearized at $x_t$.
Using $N\geq n$ and Assumption~\ref{ass:ctrb}, there exists a %feasible 
trajectory of the linearized dynamics steering the state to $x^\rms(t)$ within $N$ steps while satisfying
\begin{align*}
\sum_{k=0}^{N-1}\lVert \bar{x}_k(t)-x^\rms(t)\rVert_2+\lVert\bar{u}_k(t)-u^\rms(t)\rVert_2\leq\Gamma\lVert x_t-x^\rms(t)\rVert_2
\end{align*}
for some $\Gamma>0$ (compare~\eqref{eq:ass_ctrb}).
If $\delta$ is sufficiently small, then $u^{\rms\rmr}_{\rmlin}(x_t)\in\text{int}(\mathbb{U})$ implies that the corresponding input satisfies the constraints, i.e., $\bar{u}_k(t)\in\mathbb{U}$ for all $k\in\mathbb{I}_{[0,N-1]}$.
Clearly, the above inequality implies
\begin{align*}
&\quad\sum_{k=0}^{N-1}\lVert \bar{x}_k(t)-x^\rms(t)\rVert_Q^2+\lVert\bar{u}_k(t)-u^\rms(t)\rVert_R^2
\\
&\leq\lambda_{\max}(Q,R)\sum_{k=0}^{N-1}(\lVert \bar{x}_k(t)-x^\rms(t)\rVert_2^2+\lVert\bar{u}_k(t)-u^\rms(t)\rVert_2^2)\\
&\leq\lambda_{\max}(Q,R)\Gamma^2\lVert x_t-x^\rms(t)\rVert_2^2.
\end{align*}
Thus, the following holds for the Lyapunov function candidate
\begin{align*}
V(x_t)\leq &\lambda_{\max}(Q,R)\Gamma^2\lVert x_t-x^\rms(t)\rVert_2^2+\lVert y^\rms(t)-y^\rmr\rVert_S^2\\
&-J_{\mathrm{eq},\rmlin}^*(x_t)\\
=&\underbrace{\lambda_{\max}(Q,R)\Gamma^2}_{c_\rmu\coloneqq}\lVert x_t-x^{\rms\rmr}_{\rmlin}(x_t)\rVert_2^2.\qedhere
\end{align*}
\end{proof}

Lemma~\ref{lem:value_fcn_bound} provides bounds on the Lyapunov function candidate $V(x_t)$ that will be employed to prove closed-loop exponential stability.
As an alternative to $V(x_t)$, one could consider the candidate $J_N^*(x_t)-J_{\mathrm{eq}}^*$, which depends on the cost of the optimal reachable equilibrium for the \emph{nonlinear} system instead of the cost $J_{\mathrm{eq},\rmlin}^*(x_t)$ for the linearized system.
However, deriving a useful lower bound for $J_N^*(x_t)-J_{\mathrm{eq}}^*$ similar to~\eqref{eq:lem_value_fcn_lower_bound} is difficult, which is why we consider the proposed Lyapunov function candidate $V(x_t)$ instead.

\subsection{Contraction property}\label{sec:MPC_feas2}

The following result shows that feasibility of Problem~\eqref{eq:MPC} at time $t$ implies, under additional assumptions, feasibility at time $t+n$ and a certain contraction property for the Lyapunov function candidate $V$ on suitable sublevel sets.

\begin{proposition}\label{prop:rec_feas2}
Suppose $N\geq n$ and Assumptions~\ref{ass:C2}-\ref{ass:steady_state_compact} hold.
Then, there exist $V_{\max},J_{\mathrm{eq}}^{\max}>0$ such that, if $V(x_t)\leq V_{\max}$, $J_{\mathrm{eq},\rmlin}^*(x_t)\leq J_{\mathrm{eq}}^{\max}$, $J_{\mathrm{eq},\rmlin}^*(x_{t+n})\leq J_{\mathrm{eq}}^{\max}$, then Problem~\eqref{eq:MPC} is feasible at time $t+n$ and there exists a constant $0<c_\rmV<1$ such that $V(x_{t+n})\leq c_\rmV V(x_t)$.
\end{proposition}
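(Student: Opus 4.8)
The plan is to establish recursive feasibility and the contraction in one stroke, by constructing an explicit feasible candidate for Problem~\eqref{eq:MPC} at time $t+n$ out of the optimal solution at time $t$, bounding its cost, and then dominating the resulting error terms by the cost decrease coming from the stage cost consumed on $[t,t+n)$ and from sliding the artificial equilibrium towards the optimum.

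\emph{Set-up and candidate.} First I would fix a compact set $X$ on which Proposition~\ref{prop:Lin} (constants $c_X$, $L_f$) applies: if $V(x_t)\le V_{\max}$, then Lemma~\ref{lem:value_fcn_bound}(i) bounds $\lVert x_t-x^{\rms\rmr}_{\rmlin}(x_t)\rVert_2$, Assumption~\ref{ass:steady_state_compact} bounds $x^{\rms\rmr}_{\rmlin}(x_t)$, the inequality $\sum_{k=0}^{N-1}(\lVert\bar{x}_k^*(t)-x^{\rms*}(t)\rVert_Q^2+\lVert\bar{u}_k^*(t)-u^{\rms*}(t)\rVert_R^2)\le V(x_t)$ keeps $\bar{x}^*(t)$ and $x^{\rms*}(t)$ in a ball of radius $O(\sqrt{V_{\max}})$ around $x_t$, and Proposition~\ref{prop:Lin} then keeps $x_{t+1},\dots,x_{t+n}$ in $X$ for $V_{\max}$ small. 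The candidate I would build consists of: (a) the shifted optimal input $\bar{u}_k(t+n)=\bar{u}^*_{n+k}(t)$, $k\in\mathbb{I}_{[0,N-n-1]}$, propagated through the \emph{new} linearization $f_{x_{t+n}}$ from $\bar{x}_0(t+n)=x_{t+n}$; (b) a deadbeat controller on the last $n$ steps steering $\bar{x}_{N-n}(t+n)$ to the terminal equilibrium, whose cost is $\le\lambda_{\max}(Q,R)\Gamma^2\lVert\bar{x}_{N-n}(t+n)-x^\rms(t+n)\rVert_2^2$ by Assumption~\ref{ass:ctrb} and~\eqref{eq:ass_ctrb}; (c) the artificial equilibrium $(x^\rms(t+n),u^\rms(t+n))\in\mathcal{Z}^\rms_\rmlin(x_{t+n})$ chosen as a convex combination of the ``re-anchored'' old equilibrium — the equilibrium of $f_{x_{t+n}}$ at input $u^{\rms*}(t)$, well defined by Assumption~\ref{ass:exist_equil_state} — and the optimal reachable equilibrium $y^{\rms\rmr}_{\rmlin}(x_{t+n})$ of the new linearization. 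Convexity of $\mathcal{Z}^\rms_\rmlin(x_{t+n})$ and $\mathbb{U}^\rms\subseteq\mathrm{int}(\mathbb{U})$ make this candidate feasible for~\eqref{eq:MPC1}--\eqref{eq:MPC5}, which already settles recursive feasibility at $t+n$.

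\emph{Error bookkeeping and decrease.} Two structural facts drive the estimates. First, by~\eqref{eq:f_equals_f_lin} the prediction error vanishes in the first step, so after $n$ steps $\lVert x_{t+n}-\bar{x}_n^*(t)\rVert_2$ only accumulates the quadratic residuals of Proposition~\ref{prop:Lin} and is therefore of order $V(x_t)$; combined with~\eqref{eq:diff_lin}, this makes $\lVert\bar{x}_k(t+n)-\bar{x}^*_{n+k}(t)\rVert_2$ and the re-anchoring offset $\lVert\tilde{x}^\rms-x^{\rms*}(t)\rVert_2$ of order $V(x_t)$ as well. Second, the stage-cost bound above forces all of $\lVert x_{t+n}-x_t\rVert_2$, $\lVert x^{\rms*}(t)-x_t\rVert_2$, $\lVert\bar{x}^*_{n+k}(t)-x^{\rms*}(t)\rVert_2$ to be of order $\sqrt{V(x_t)}$. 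Expanding the candidate cost with~\eqref{eq:ab_ineq}--\eqref{eq:prop_proof_std_norm_ineq}, using strong convexity~\eqref{eq:ass_strongly_convex} at $x_{t+n}$, and subtracting $J^*_{\rmeq,\rmlin}(x_{t+n})$ — which cancels against the part of the penalty carried by $y^{\rms\rmr}_{\rmlin}(x_{t+n})$ — yields a bound of the form $V(x_{t+n})\le V(x_t)-P-\gamma E+(\text{errors})$, where $P$ is the stage cost consumed on $[t,t+n)$, $E:=\lVert y^{\rms*}(t)-y^\rmr\rVert_S^2-J^*_{\rmeq,\rmlin}(x_t)\ge0$ is the sub-optimality of the old artificial equilibrium, and $\gamma\in(0,1)$ is the convex-combination weight. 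Lemma~\ref{lem:value_fcn_bound}(ii) (valid for $V_{\max}$ small), strong convexity~\eqref{eq:ass_strongly_convex}, and the Lipschitz map $\hat{g}_{x_t}$ then give $V(x_t)\le c(P+E)$, so $P+\gamma E$ is a fixed fraction of $V(x_t)$.

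\emph{Main obstacle.} The delicate part is the ``errors'' term. Moving the linearization point from $x_t$ to $x_{t+n}$ shifts the reference cost by $J^*_{\rmeq,\rmlin}(x_t)-J^*_{\rmeq,\rmlin}(x_{t+n})$, and — unlike the prediction error — this is genuinely \emph{first order} in $\lVert x_t-x_{t+n}\rVert_2$, hence only $O(\sqrt{V(x_t)})$ with a coefficient controlled by $\sqrt{J^*_{\rmeq,\rmlin}}$; moreover the re-anchoring and deadbeat costs and the tail cross-terms are bounded only by multiples of $P+E$. One therefore has to choose $\gamma$ (and the sizes of $V_{\max}$, $J^{\max}_{\rmeq}$) so that these contributions are strictly dominated by $P+\gamma E$. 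I expect the clean route is a case split on whether $\lVert y^{\rms*}(t)-y^{\rms\rmr}_{\rmlin}(x_t)\rVert_2$ exceeds a fixed threshold $\rho$: in the ``far'' case the penalty is \emph{strictly} decreased by a fixed amount through the move towards $y^{\rms\rmr}_{\rmlin}(x_{t+n})$, which beats the $O(\sqrt{V(x_t)})$ terms since $V(x_t)$ is then bounded below; in the ``near'' case $E$ and $\lVert x_t-x_{t+n}\rVert_2$ are small, so the offending first-order term degrades to $O(V(x_t))$ with a small coefficient and is absorbed, after applying Young's inequality, into $P+\gamma E$. Converting the resulting $V(x_{t+n})\le V(x_t)-\omega(P+E)\le(1-\omega/c)V(x_t)$ (plus higher-order terms) into $V(x_{t+n})\le c_\rmV V(x_t)$ for $V_{\max}$ small then completes the argument.
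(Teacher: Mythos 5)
Your overall architecture is viable and close to the paper's: the paper also builds the candidate at time $t+n$ from the shifted input propagated through the new linearization plus a deadbeat tail, re-anchors the old equilibrium input via Assumption~\ref{ass:exist_equil_state}, and moves the artificial equilibrium by a convex combination towards the optimal reachable one; the main structural difference is that the paper uses \emph{two separate} candidates selected by whether $\sum_{k=0}^{n-1}\lVert\bar{x}_k^*(t)-x^{\rms*}(t)\rVert_2^2+\lVert\bar{u}_k^*(t)-u^{\rms*}(t)\rVert_2^2$ dominates $\gamma_1\lVert x^{\rms*}(t)-x^{\rms\rmr}_{\rmlin}(x_t)\rVert_2^2$ (shifted input with \emph{unmoved} equilibrium in one case, pure controllability trajectory with moved equilibrium in the other), rather than your single combined candidate with an absolute threshold on $\lVert y^{\rms*}(t)-y^{\rms\rmr}_{\rmlin}(x_t)\rVert_2$. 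That difference is not the problem. The genuine gap is your treatment of the term $J_{\mathrm{eq},\rmlin}^*(x_t)-J_{\mathrm{eq},\rmlin}^*(x_{t+n})$, which you declare to be ``genuinely first order in $\lVert x_t-x_{t+n}\rVert_2$, hence only $O(\sqrt{V(x_t)})$'' and then try to repair by a case split plus Young's inequality. In the near case this repair fails: Young's inequality turns $c\sqrt{J_{\mathrm{eq}}^{\max}}\sqrt{V(x_t)}$ into $\tfrac{1}{2\eta}V(x_t)+\tfrac{\eta}{2}J_{\mathrm{eq}}^{\max}$; absorbing the first summand into the decrease forces $\eta$ large, and the second summand is then an additive constant, not proportional to $V(x_t)$, so it cannot be dominated by the available decrease $\omega V(x_t)$ as $V(x_t)\to0$ (and $J_{\mathrm{eq},\rmlin}^*$ has a positive infimum whenever $y^\rmr$ is unreachable). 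No fixed choice of $V_{\max},J_{\mathrm{eq}}^{\max},\rho,\eta$ closes this, so the claimed contraction $V(x_{t+n})\leq c_\rmV V(x_t)$ is not established near the fixed point.

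The fix is a bound you in fact already have in hand for the re-anchoring offset but do not apply here: the value-function shift is \emph{not} first order in $\lVert x_t-x_{t+n}\rVert_2$. For a fixed equilibrium input $u^\rms$, the equilibria of the two linearizations satisfy, by Assumption~\ref{ass:exist_equil_state} and~\eqref{eq:diff_lin} evaluated at a common point $z$,
\begin{align*}
\lVert x^{\rms}_{x_{t+n}}-x^{\rms}_{x_t}\rVert_2\leq\frac{c_X}{\underline{\sigma}}\left(\lVert z-x_t\rVert_2^2+\lVert z-x_{t+n}\rVert_2^2\right),
\end{align*}
which is $O(V(x_t))$ because the relevant equilibria lie within $O(\sqrt{V(x_t)})$ of both linearization points (this is exactly~\eqref{eq:prop_proof_equil_diff_aux} in the paper). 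Feeding this second-order estimate into $\lVert a\rVert_S^2-\lVert b\rVert_S^2\leq\lVert a-b\rVert_S^2+2\lVert a-b\rVert_S\lVert b\rVert_S$ yields
\begin{align*}
J_{\mathrm{eq},\rmlin}^*(x_t)-J_{\mathrm{eq},\rmlin}^*(x_{t+n})\leq\tilde{c}^2V(x_t)^2+2\tilde{c}V(x_t)\sqrt{J_{\mathrm{eq}}^{\max}},
\end{align*}
as in~\eqref{eq:prop2_proof_Jeq_xtxtn}, i.e., a term of order $V(x_t)$ with a coefficient that vanishes as $J_{\mathrm{eq}}^{\max}\to0$, which is absorbed into the contraction without any case split on the size of $V(x_t)$. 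Once this term is handled correctly, your single-candidate construction and the decrease accounting via $P+\gamma E\gtrsim V(x_t)$ (using Lemma~\ref{lem:value_fcn_bound}(ii) and the Lipschitz map $\hat{g}_{x_t}$) go through essentially as in the paper.
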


The proof of Proposition~\ref{prop:rec_feas2} is provided in the appendix, and it uses a case distinction with two different candidate solutions.
In Appendix~\ref{appendix_A}, the case that the tracking cost w.r.t. the artificial steady-state is relatively large is considered, compare Inequality~\eqref{eq:prop_case1}.
In this case, the candidate solution is the previously optimal input appended by a local deadbeat controller compensating the model mismatch due to the linearization.
If $V_{\max}$ and $J_{\mathrm{eq}}^{\max}$ are sufficiently small, then this model mismatch is also small such that a decrease of the optimal cost can be derived.
On the other hand, Appendix~\ref{appendix_B} considers the converse case, where the current state is close to the artificial steady-state.
In this case, the candidate solution results from shifting the artificial equilibrium towards the optimal reachable equilibrium for the linearized dynamics.
Proposition~\ref{prop:rec_feas2} can also be seen as an extension of~\cite{limon2018nonlinear,koehler2020nonlinear}, where similar properties are shown for MPC with a \emph{nonlinear} prediction model, whereas our MPC scheme contains a linear prediction model.

Even for $V_{\max}$ arbitrarily small, the bound $V(x_t)\leq V_{\max}$ holds in a neighborhood of the steady-state manifold if $S$ is chosen sufficiently small using compactness (cf. the proof of Theorem~\ref{thm:stab}).
In this case, $x^{\rms*}(t)$ is close to $x_t$ and hence the stage cost $\sum_{k=0}^{N-1}\lVert\bar{x}_k^*(t)-x^{\rms*}(t)\rVert_Q^2+\lVert\bar{u}_k^*(t)-u^{\rms*}(t)\rVert_R^2$ becomes small.
Similarly, as we exploit in the next section, the bounds $J_{\mathrm{eq},\rmlin}^*(x_t)\leq J_{\mathrm{eq}}^{\max}$ and $J_{\mathrm{eq},\rmlin}^*(x_{t+n})\leq J_{\mathrm{eq}}^{\max}$ hold with arbitrarily small $J_{\mathrm{eq}}^{\max}$ if $S$ is chosen sufficiently small since the steady-state manifold is compact by Assumption~\ref{ass:steady_state_compact}.
Note that Proposition~\ref{prop:rec_feas2} does not prove any recursive closed-loop properties since the assumed bounds are not necessarily satisfied recursively.
We discuss these conditions in relation with the proof of closed-loop recursive feasibility and stability in Section~\ref{sec:MPC_stab}.

\subsection{Exponential stability}\label{sec:MPC_stab}

In this section, we use Lemma~\ref{lem:value_fcn_bound} and Proposition~\ref{prop:rec_feas2} to prove closed-loop exponential stability of the optimal reachable steady-state $x^{\rms\rmr}$ of the nonlinear system.
It follows from Proposition~\ref{prop:rec_feas2} that, under the given assumptions, the function $V$ satisfies $V(x_{t+n})\leq c_\rmV V(x_t)$.
However, this does not yet prove the desired stability result since 1) it remains to show that the inequality is satisfied recursively and 2) $V$ is only lower and upper bounded by the distance w.r.t. the optimal reachable steady-state of the \emph{linearized} dynamics (compare Lemma~\ref{lem:value_fcn_bound}).
In the following, we make the additional assumption that the current state $x_t$ is close to $x^{\rms\rmr}_{\rmlin}(x_t)$ if and only if it is close to the optimal reachable steady-state $x^{\rms\rmr}$.
\begin{assumption}\label{ass:convex_steady_state_manifold}
For any compact set $X$ with $\mathcal{B}\subseteq X\times\mathbb{U}$ (cf. Assumption~\ref{ass:steady_state_compact}), there exist constants $c_{\mathrm{eq},1},c_{\mathrm{eq},2}>0$ such that, for any $\hat{x}\in X$, it holds that
\begin{align}\label{eq:convexity_nonlinear}
c_{\mathrm{eq},1}\lVert\hat{x}-x^{\rms\rmr}_{\rmlin}(\hat{x})\rVert_2^2\leq\lVert\hat{x}-x^{\rms\rmr}\rVert_2^2\leq c_{\mathrm{eq},2}\lVert\hat{x}-x^{\rms\rmr}_{\rmlin}(\hat{x})\rVert_2^2.
\end{align}
\end{assumption}
Assumption~\ref{ass:convex_steady_state_manifold} requires that the distance between some state $\hat{x}$ and $x^{\rms\rmr}$ is lower and upper bounded by the distance between $\hat{x}$ and the optimal reachable steady-state for the linearized dynamics $x^{\rms\rmr}_{\rmlin}(\hat{x})$.
%Loosely speaking, Assumption~\ref{ass:convex_steady_state_manifold} prevents the proposed MPC scheme from getting stuck in a local minimum of the optimal equilibrium cost $J_{\mathrm{eq}}^*$ and thus, together with Assumption~\ref{ass:unique_steady_state}, it also implies that $x^{\rms\rmr}$ is unique.
%In Appendix~\ref{app:cond_ass}, we provide more intuitive sufficient conditions which imply Assumption~\ref{ass:convex_steady_state_manifold}.
In Appendix~\ref{app:cond_ass}, we show that Assumption~\ref{ass:convex_steady_state_manifold} holds if, in addition to the assumptions in Section~\ref{sec:setting}, the setpoint $y^\rmr$ is reachable and $m=p$ holds.
The following theorem shows that the optimal reachable equilibrium of the nonlinear system is exponentially stable under the proposed MPC scheme.

\begin{theorem}\label{thm:stab}
Suppose $N\geq n$ and Assumptions~\ref{ass:C2}--\ref{ass:convex_steady_state_manifold} hold.
Then, there exist $V_{\max},\bar{S}>0$ such that, if $\lambda_{\max}(S)\leq\bar{S}$ and $V(x_0)\leq V_{\max}$, then Problem~\eqref{eq:MPC} is feasible at any time $n\cdot i,i\in\mathbb{I}_{\geq0}$ and $x^{\rms\rmr}$ in~\eqref{eq:x_sr_u_sr_def} is exponentially stable, i.e., there exist constants $C>0,c_\rmV<1$ such that for all $i\in\mathbb{I}_{\geq0}$
\begin{align}\label{eq:thm_stab}
\lVert x_{ni}-x^{\rms\rmr}\rVert_2^2\leq c_\rmV^{i}C\lVert x_0-x^{\rms\rmr}\rVert_2^2.
\end{align}
\end{theorem}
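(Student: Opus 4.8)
The plan is to combine the value-function bounds of Lemma~\ref{lem:value_fcn_bound}, the contraction property of Proposition~\ref{prop:rec_feas2}, and Assumption~\ref{ass:convex_steady_state_manifold} into a standard Lyapunov argument, with the main effort going into choosing $\bar{S}$ and $V_{\max}$ so that the hypotheses of Proposition~\ref{prop:rec_feas2} hold \emph{recursively}. First I would fix the compact set: using Assumption~\ref{ass:steady_state_compact}, the equilibrium $x^{\rms\rmr}_{\rmlin}(\tilde{x})$ ranges over a compact set $\mathcal{B}_x$ for all $\tilde{x}$; I define $X$ as a sublevel set of $V$ (made compact via Lemma~\ref{lem:value_fcn_bound}(i) together with the compactness of $\mathcal{B}_x$) that contains $\mathcal{B}_x$ and a neighborhood of it. On this $X$, Lemma~\ref{lem:value_fcn_bound} gives constants $c_\rml,c_\rmu,\delta$, Proposition~\ref{prop:rec_feas2} gives $V_{\max},J_{\mathrm{eq}}^{\max}$, and Assumption~\ref{ass:convex_steady_state_manifold} gives $c_{\mathrm{eq},1},c_{\mathrm{eq},2}$.

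The second step is the \emph{smallness of $S$} argument, which is really the crux. Since the nonlinear steady-state manifold $\mathcal{Z}^\rms$ is compact and $y^\rmr$ is fixed, the optimal equilibrium cost $J^*_{\mathrm{eq},\rmlin}(\tilde{x})$ is bounded by $\lambda_{\max}(S)$ times a constant (the squared diameter of $\mathcal{Z}^\rms_{\rmy,\rmlin}$ relative to $y^\rmr$, which is uniformly bounded by Assumption~\ref{ass:steady_state_compact}); hence choosing $\lambda_{\max}(S)\leq\bar{S}$ small forces $J^*_{\mathrm{eq},\rmlin}(\tilde{x})\leq J_{\mathrm{eq}}^{\max}$ for \emph{all} $\tilde{x}\in X$ — so that particular hypothesis of Proposition~\ref{prop:rec_feas2} is automatic along the closed loop. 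Similarly, I would argue that if $x_t$ lies within a fixed neighborhood of $\mathcal{Z}^\rms_x$, then $x^{\rms*}(t)$ is close to $x_t$ and $V(x_t)$ is $O(\lambda_{\max}(S))$; combined with Lemma~\ref{lem:value_fcn_bound}(ii) and Assumption~\ref{ass:convex_steady_state_manifold}, this shows that for $\bar{S}$ small enough the initial bound $V(x_0)\leq V_{\max}$ holds on a genuine neighborhood of the manifold, and moreover the sublevel set $\{V\leq V_{\max}\}$ stays inside $X$.

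Third, I would run the induction. Assume $V(x_{ni})\leq V_{\max}$ (and note $J^*_{\mathrm{eq},\rmlin}(x_{ni})\leq J_{\mathrm{eq}}^{\max}$ and $J^*_{\mathrm{eq},\rmlin}(x_{n(i+1)})\leq J_{\mathrm{eq}}^{\max}$ automatically, provided $x_{n(i+1)}\in X$, which follows because the contraction keeps us in the sublevel set). Proposition~\ref{prop:rec_feas2} then yields feasibility at $n(i+1)$ and $V(x_{n(i+1)})\leq c_\rmV V(x_{ni})\leq V_{\max}$, closing the induction and giving $V(x_{ni})\leq c_\rmV^i V(x_0)$. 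Finally I convert this to the state bound: by Lemma~\ref{lem:value_fcn_bound}(i), Assumption~\ref{ass:convex_steady_state_manifold}, and Lemma~\ref{lem:value_fcn_bound}(ii),
\begin{align*}
\lVert x_{ni}-x^{\rms\rmr}\rVert_2^2
&\leq c_{\mathrm{eq},2}\lVert x_{ni}-x^{\rms\rmr}_{\rmlin}(x_{ni})\rVert_2^2
\leq \frac{c_{\mathrm{eq},2}}{c_\rml}V(x_{ni})
\leq \frac{c_{\mathrm{eq},2}}{c_\rml}c_\rmV^i V(x_0),
\end{align*}
and for the initial term I use (after possibly shrinking $V_{\max}$ so that $\lVert x_0-x^{\rms\rmr}_{\rmlin}(x_0)\rVert_2\leq\delta$) that $V(x_0)\leq c_\rmu\lVert x_0-x^{\rms\rmr}_{\rmlin}(x_0)\rVert_2^2\leq \frac{c_\rmu}{c_{\mathrm{eq},1}}\lVert x_0-x^{\rms\rmr}\rVert_2^2$, so $C=\frac{c_{\mathrm{eq},2}c_\rmu}{c_\rml c_{\mathrm{eq},1}}$ works. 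The main obstacle I anticipate is the bookkeeping in step two and three: making the choices of $\bar{S}$, $V_{\max}$, $X$, and $\delta$ mutually consistent — in particular ensuring the $V_{\max}$-sublevel set is contained in the compact $X$ on which all the constants were extracted, so that the assumptions of Proposition~\ref{prop:rec_feas2} are genuinely inherited at every step $ni$ rather than just at $t=0$.
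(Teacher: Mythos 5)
Your proposal is correct and follows essentially the same route as the paper's proof: exploit Assumption~\ref{ass:steady_state_compact} to make $J_{\mathrm{eq},\rmlin}^*$ uniformly small via $\lambda_{\max}(S)\leq\bar{S}$, apply Proposition~\ref{prop:rec_feas2} inductively to get $V(x_{ni})\leq c_\rmV^i V(x_0)$ with recursive feasibility, and convert to the state bound via Lemma~\ref{lem:value_fcn_bound} and Assumption~\ref{ass:convex_steady_state_manifold}, arriving at the same constant $C=\frac{c_\rmu c_{\mathrm{eq},2}}{c_\rml c_{\mathrm{eq},1}}$. The additional bookkeeping you flag about keeping the $V_{\max}$-sublevel set inside the compact set $X$ is handled implicitly in the paper (the sublevel set is compact by the lower bound of Lemma~\ref{lem:value_fcn_bound} and Assumption~\ref{ass:steady_state_compact}), so no gap remains.
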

\begin{proof}
Assumption~\ref{ass:steady_state_compact} implies that the union of all output equilibrium manifolds $\mathcal{Z}^\rms_{\rmy,\rmlin}(\tilde{x})$ is compact.
Thus, there exists a uniform upper bound $J_{\mathrm{eq}}^{\max}$ on $J_{\mathrm{eq},\rmlin}^*(\tilde{x})$, i.e.,
\begin{align}\label{eq:thm_stab_J_eq_max}
J_{\mathrm{eq},\rmlin}^*(\tilde{x})\leq J_{\mathrm{eq}}^{\max}\>\>\text{for all}\>\>\tilde{x}\in\mathbb{R}^n.
\end{align}
Note that $J_{\mathrm{eq}}^{\max}$ can be chosen arbitrarily small when $\lambda_{\max}(S)$ is sufficiently small.
Hence, choosing $\lambda_{\max}(S)$ sufficiently small and using~\eqref{eq:thm_stab_J_eq_max}, we can apply Proposition~\ref{prop:rec_feas2} to conclude $V(x_{t+n})\leq c_\rmV V(x_t)$, which in turn implies $V(x_{t+n})\leq V_{\max}$.
Applying this argument inductively, we conclude $V(x_{t+n})\leq c_\rmV V(x_t)$ for all $t=n\cdot i,i\in\mathbb{I}_{\geq0}$, where $c_\rmV<1$.
Using Lemma~\ref{lem:value_fcn_bound} (the upper bound holds if $V_{\max}$ is sufficiently small), this implies
\begin{align}\label{eq:thm_stab_ineq}
\lVert x_{t+n}-x^{\rms\rmr}_{\rmlin}(x_{t+n})\rVert_2^2\leq c_\rmV^i\frac{c_\rmu}{c_\rml}\lVert x_t-x^{\rms\rmr}_{\rmlin}(x_t)\rVert_2^2.
\end{align}
Finally, using~\eqref{eq:convexity_nonlinear}, this leads to~\eqref{eq:thm_stab} with $C\coloneqq\frac{c_\rmu c_{\mathrm{eq},2}}{c_\rml c_{\mathrm{eq},1}}$.
\end{proof}

Theorem~\ref{thm:stab} is our main stability result.
It shows that, if $V_{\max}$ and $S$ are sufficiently small, then the optimal reachable steady-state $x^{\rms\rmr}$ is exponentially stable under the proposed $n$-step MPC scheme, i.e., Inequality~\eqref{eq:thm_stab} holds, and hence also the output $y_t$ exponentially converges towards the optimal reachable output $y^{\rms\rmr}$.
Intuitively, Theorem~\ref{thm:stab} shows that the artificial steady-state $x^{\rms*}(t)$ and thus also the state trajectory $x_t$ slides along the steady-state manifold in closed loop towards the optimal reachable steady-state.
Thus, the guaranteed region of attraction of $x^{\rms\rmr}$ is a neighborhood around the steady-state manifold, which increases if $S$ is chosen smaller (for a given value of $V_{\max}$).
Note that, although the prediction model of the proposed MPC scheme is not exact, the closed loop is nevertheless exponentially stable (i.e., it is not only practically stable) since the prediction accuracy improves as $x_t$ gets closer to the steady-state manifold $\mathcal{Z}^\rms$.

Theorem~\ref{thm:stab} should be interpreted as a \emph{qualitative} result since it does not provide explicit values of $V_{\max}$ and $\bar{S}$ leading to closed-loop stability.
Loosely speaking, Theorem~\ref{thm:stab} guarantees stability if the initial state is \emph{sufficiently close} to $\calZ_x^\rms$ and $\lambda_{\max}(S)$ is \emph{sufficiently small}.
This is due to the fact that $V(x_t)\leq V_{\max}$ can be ensured for an arbitrarily small $V_{\max}$ if $x_t$ lies in a neighborhood of $\calZ_x^\rms$ and $\bar{S}$ is sufficiently small.
%The condition that $V(x_t)\leq V_{\max}$ can be ensured for arbitrarily small $V_{\max}$ in a neighborhood of the steady-state manifold $\mathcal{Z}^\rms$ by choosing $S$ sufficiently small (using compactness of the output equilibrium manifold $\mathcal{Z}_{\rmy}^\rms$ by Assumption~\ref{ass:steady_state_compact}).
In~\cite{ferramosca2011optimal}, it is shown for linear systems that a tracking MPC formulation recovers optimality properties of a standard MPC scheme if the weight on the distance between the artificial setpoint and the reference setpoint (i.e., the matrix $S$ in our setting) is suitably large.
This indicates a trade-off when designing the matrix $S$:
It needs to be suitably small such that the linearization error is small and stability can be guaranteed, but the performance deteriorates if $S$ is chosen too small.
%Further, since the proof of Proposition~\ref{prop:rec_feas2} employs several conservative bounds, the guaranteed closed-loop region of attraction may be a conservative estimate and the actual region of attraction is expected to be larger.

\begin{remark}\label{rk:RTI}
%In comparison to existing nonlinear MPC approaches, the proposed MPC scheme is computationally more efficient since it relies on solving a convex QP.
%Nevertheless, as shown in Theorem~\ref{thm:stab}, the scheme leads to desirable closed-loop stability properties.
Note that Theorem~\ref{thm:stab} guarantees closed-loop stability of Algorithm~\ref{alg:MPC} which only requires solving one convex QP online.
Alternative nonlinear MPC approaches based on convex optimization typically employ an LTI prediction model by linearizing at the setpoint or, as in the real-time iteration scheme~\cite{diehl2005nominal,liao2020time,zanelli2021lyapunov}, an LTV prediction model by linearizing along the candidate solution.
Closed-loop guarantees of such approaches require either an additional bounding of the linearization error~\cite{cannon2011robust} or a sufficiently small sampling time~\cite{zanelli2021lyapunov}, i.e., solving the underlying optimization problem more frequently~\cite{liao2020time}.
The latter ensures that the linearized dynamics do not change too rapidly, which is analogous to our condition on $\lambda_{\max}(S)$ being sufficiently small.
The proposed approach has a large region of attraction due to the online optimization of the artificial steady-state.
In particular, while a standard (linearization-based) MPC only guarantees stability when starting in a region around the setpoint, our MPC scheme ensures stability for initial conditions far away from the setpoint as long as they are close to the steady-state manifold.
This fact will be illustrated with a numerical example in Section~\ref{sec:example}.
%If compared, e.g., to the real-time iteration scheme~\cite{diehl2005nominal}, the proposed approach has a (potentially significantly) larger region of attraction since it optimizes an artificial steady-state online.
Furthermore, as is standard in MPC, our guarantees remain true if the QP is not solved up to optimality by using a warm start and results on suboptimality in MPC~\cite[Section 2.7]{rawlings2020model}.
\end{remark}

\begin{remark}
Theorem~\ref{thm:stab} requires a number of assumptions, most of which are not too restrictive when proving closed-loop stability of a linearization-based MPC scheme utilizing the benefits of an artificial setpoint:
Assumption~\ref{ass:C2} (smoothness) is clearly required for a linearization-based MPC scheme.
Further, Assumption~\ref{ass:unique_steady_state} is a standard condition in the literature on tracking MPC, compare~\cite{rawlings2020model,limon2018nonlinear}.
If Assumption~\ref{ass:unique_steady_state} does not hold, we can still guarantee asymptotic stability of some steady-state, but not necessarily convergence to $x^{\rms\rmr}$.
Assumptions on controllability (Assumption~\ref{ass:ctrb}) are also standard in the presence of terminal equality constraints, see~\cite{rawlings2020model}.
In order to ensure that the employed bounds hold uniformly, it is crucial to make assumptions on compactness of the steady-state manifold (Assumption~\ref{ass:steady_state_compact}).
Moreover, as we show in Appendix~\ref{app:cond_ass}, Assumption~\ref{ass:convex_steady_state_manifold} holds, in fact, as long as $m=p$ and $y^\rmr$ is reachable.
In the absence of Assumption~\ref{ass:convex_steady_state_manifold}, we can still guarantee closed-loop stability of the optimal reachable steady-state of \emph{some} linearization, compare Inequality~\eqref{eq:thm_stab_ineq}, which may not necessarily be optimal for the nonlinear dynamics.
On the other hand, Assumption~\ref{ass:exist_equil_state} (non-singular dynamics) might possibly be relaxed, compare the discussion after Assumption~\ref{ass:exist_equil_state}.
Finally, we note that these assumptions hold in many practical applications, e.g., for the CSTR example we consider in Section~\ref{sec:example}.
\end{remark}

To conclude, the proposed MPC scheme based on repeatedly solving the linear MPC problem~\eqref{eq:MPC} leads to desirable closed-loop guarantees when applied to a nonlinear system, and the scheme can be tuned based on a \emph{single} design parameter $S$, which allows for a trade-off between the size of the region of attraction and the convergence speed.
Compared to nonlinear tracking MPC schemes such as~\cite{limon2018nonlinear,koehler2020nonlinear}, our approach has the drawback that convergence may be slower since $S$ needs to be chosen sufficiently small.
On the other hand, Problem~\eqref{eq:MPC} is a convex QP which can be solved up to global optimality very efficiently.
Further, the prediction model only requires an accurate description of the system close to the steady-state manifold, which may be simpler to obtain than a globally accurate model which is required to obtain superior performance with existing nonlinear MPC approaches.
Finally, as we show in our companion paper~\cite{berberich2021linearpart2}, the presented idea can be extended in order to develop a data-driven MPC scheme to control unknown nonlinear systems by continuously updating the measured data used for prediction.

\section{Numerical Example}\label{sec:example}
We apply the proposed MPC scheme to the CSTR from~\cite{mayne2011tube} with the nonlinear system dynamics $f(x,u)$ equal to
\begin{align*}
\begin{bmatrix}x_1+\frac{T_\rms}{\theta}(1-x_1)-T_\rms\bar{k}x_1e^{-\frac{M}{x_2}}\\
x_2+\frac{T_\rms}{\theta}(x_\rmf-x_2)+T_\rms\bar{k}x_1e^{-\frac{M}{x_2}}-T_\rms\alpha u(x_2-x_\rmc)
\end{bmatrix}.
\end{align*}
The states $x_1$ and $x_2$ are the temperature and the concentration, respectively, and the control input $u$ is the coolant flow rate.
These dynamics are obtained from the continuous-time dynamics in~\cite{mayne2011tube} via a simple Euler discretization with sampling time $T_\rms=0.2$.
The other parameters appearing in the vector field are $\theta=20$, $\bar{k}=300$, $M=5$, $x_\rmf=0.3947$, $x_\rmc=0.3816$, $\alpha=0.117$.
Our control goal is tracking of the output setpoint $y^\rmr=0.6519$ for the concentration, i.e., $h(x,u)=x_2$, while satisfying the input constraints $u_k\in\mathbb{U}=[0.1,2]$ for $k\in\mathbb{I}_{\geq0}$.
In order to set up the MPC, we consider the cost matrices $Q=I$, $R=0.05$, $S=100$, the prediction horizon $N=40$, and the input equilibrium constraints $\mathbb{U}^\rms=[0.11,1.99]$.
Since the dynamics are not of the control-affine form~\eqref{eq:nl_sys}, we implement the MPC scheme with an incremental input formulation $\Delta u_k\coloneqq u_{k+1}-u_k$ and include an additional penalty $\lVert\Delta u_k\rVert_2^2$ in the cost.

We first investigate whether our assumptions are met by the CSTR.
For this verification, we only consider linearization points in the relevant operating range, i.e., $\tilde{x}\in(0,1]^2$.
It is simple to verify that the considered system satisfies Assumption~\ref{ass:C2} (smoothness).
While Assumption~\ref{ass:exist_equil_state} (non-singular dynamics) does not hold due to the integrator dynamics $u_{k+1}=u_k+\Delta u_k$, our theoretical results still apply since Assumption~\ref{ass:exist_equil_state} holds for the original system (without $\Delta u$) and thus, for any given $(u^\rms,\Delta u^\rms)=(u^\rms,0)$ there still exists a unique steady-state $x^\rms$.
Assumption~\ref{ass:unique_steady_state} holds in $(0,1]^2$ except for a neighborhood of $x_2=x_\rmc$.
Similarly, the linearized dynamics are controllable (Assumption~\ref{ass:ctrb}) on $(0,1]^2$ except in a neighborhood of $x_2=x_\rmc$ or $x_2=0$. 
Further, Assumption~\ref{ass:steady_state_compact} (compact steady-state manifold of the linearization) clearly holds on the set $\tilde{x}\in(0,1]^2$ due to Assumption~\ref{ass:exist_equil_state}.
Finally, Assumption~\ref{ass:convex_steady_state_manifold} holds since $m=p$ and the setpoint $y^\rmr$ is reachable, i.e., Assumption~\ref{ass:reachability} holds, compare Appendix~\ref{app:cond_ass}.

\begin{figure}
\begin{center}
\includegraphics[width=0.5\textwidth]{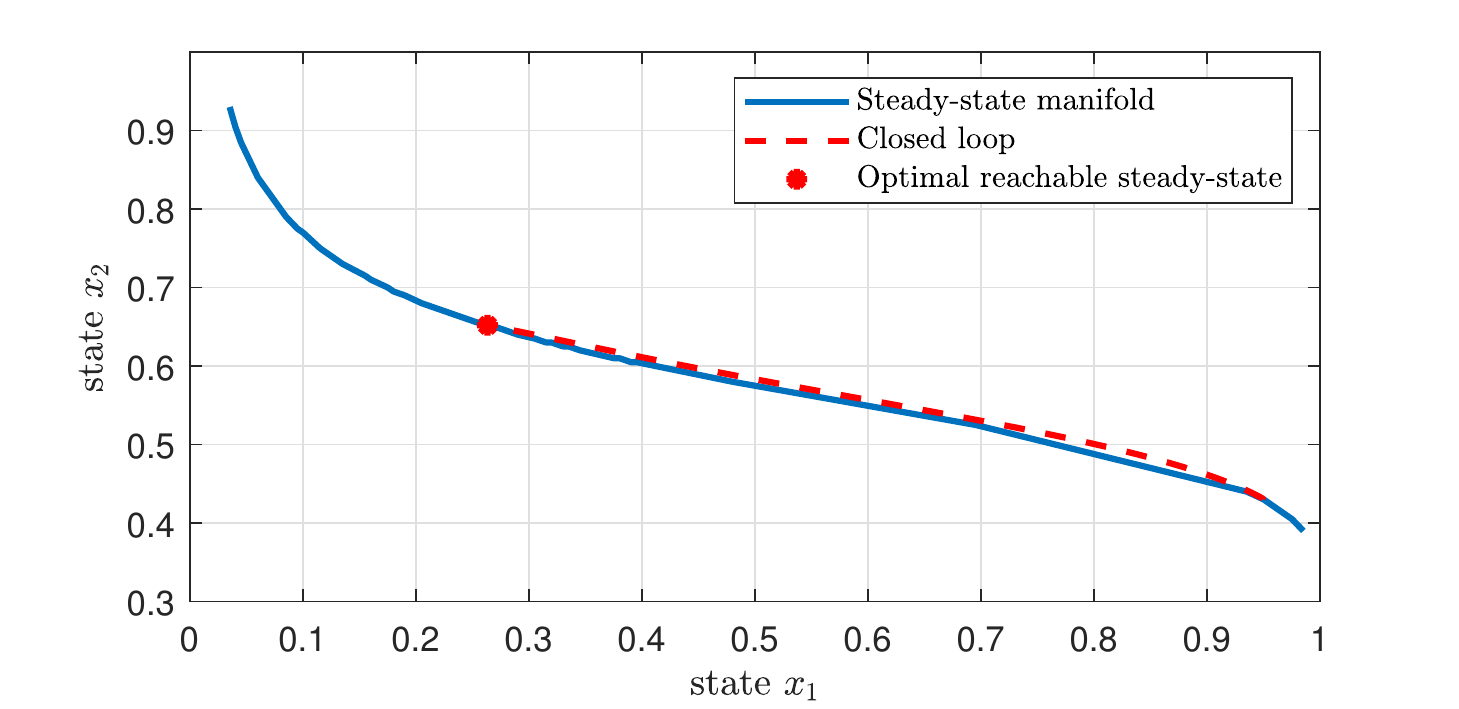}
\end{center}
\caption{State component of the steady-state manifold $\mathcal{Z}^\rms$ and closed-loop trajectory under the $n$-step MPC scheme (Algorithm~\ref{alg:MPC}) for the numerical example in Section~\ref{sec:example}.}
\label{fig:manifold_example}
\end{figure}

Figure~\ref{fig:manifold_example} shows the closed-loop state trajectory under Algorithm~\ref{alg:MPC} when starting at the initial state $x_0=\begin{bmatrix}0.9492&0.43\end{bmatrix}^\top$.
During the full closed-loop operation, the trajectory remains close to $\mathcal{Z}^\rms$ such that the prediction error induced by the linearization is small and $y_t$ asymptotically converges to $y^\rmr$.
For comparison, we also apply the following MPC schemes, each with terminal equality constraints, online optimization of an artificial equilibrium, an incremental input penalty, and the same design parameters as above:
\begin{enumerate}
\item the proposed MPC scheme in a one-step fashion (i.e., Algorithm~\ref{alg:MPC} with $n=1$),

\item a one-step MPC scheme using an LTI prediction model obtained by linearizing the nonlinear dynamics~\eqref{eq:nl_sys} at $x^{\rms\rmr}$ (called ``LTI-MPC''),

\item a one-step MPC scheme using an LTV prediction model obtained by linearizing the nonlinear dynamics~\eqref{eq:nl_sys} at time $t$ along the candidate solution $\bar{x}_1^*(t-1)$, $\bar{x}_2^*(t-1)$, $\dots$, $\bar{x}_N^*(t-1)$ (called ``LTV-MPC''), analogously to~\cite{cannon2011robust} and comparable to the real-time iteration scheme~\cite{diehl2005nominal}, and

\item the nonlinear tracking MPC scheme from~\cite{koehler2020nonlinear}.
\end{enumerate} 
The closed-loop state- and input-trajectories can be seen in Figure~\ref{fig:example_sol}.
First, note that, except for the LTI-MPC, all MPC schemes achieve asymptotic tracking of the desired setpoint.
The nonlinear tracking MPC from~\cite{koehler2020nonlinear} performs better than the LTV-MPC, which in turn outperforms the proposed $n$-step MPC scheme (Algorithm~\ref{alg:MPC}) as well as the corresponding $1$-step MPC scheme.
%
%Moreover, although the nonlinear tracking MPC scheme from~\cite{koehler2020nonlinear} performs best, the $n$-step MPC scheme (Algorithm~\ref{alg:MPC}) as well as the corresponding $1$-step MPC scheme lead to a comparable closed-loop performance.
%
\begin{table}\label{tab:computation_times}
\begin{center}
\begin{tabular}{c|c|c|c}
&Setup QP&Optimization&Sum\\\hline
Nonlinear MPC&--&$21.6$&$21.6$\\\hline
LTV-MPC&$2.2$&$6.8$&$9$\\\hline
Proposed $1$- or $n$-step MPC&$0.6$&$6.8$&$7.4$
\end{tabular}
\vskip5pt
\caption{Average computation times in milliseconds of the MPC schemes in the numerical example.}
\end{center}
\end{table}
Table~I lists the times required for setting up the QPs, including the computation of the Jacobians, and solving the optimization problems arising in the considered MPC schemes (using 'quadprog' for the QPs and CasADi~\cite{andersson2019mathematical} with solver 'IPOPT' for the nonlinear optimization problem in~\cite{koehler2020nonlinear}).
%Solving the nonlinear optimization problem in~\cite{koehler2020nonlinear} (using CasADi~\cite{andersson2019mathematical} \JB{with solver 'Ipopt'}) takes on average \JB{$22.9$ milliseconds.
%On the other hand, computing the required Jacobians and solving the QPs arising in the linearization-based MPC schemes via 'quadprog' takes on average $9$ milliseconds (LTV-MPC), $7.7$ milliseconds (proposed $n$-step MPC scheme), and $7$ milliseconds (proposed $1$-step MPC scheme).
Note that the LTV-MPC has slightly larger computation times since, at each time step, $N$ linearized dynamics need to be computed, whereas the proposed MPC scheme only requires the linearization at $x_t$.
While the LTV-MPC provides a good trade-off between computational complexity and closed-loop performance, theoretical results in the literature require either an additional bounding of the linearization error~\cite{cannon2011robust} or sufficiently many iterations~\cite{liao2020time,zanelli2021lyapunov}, compare Remark~\ref{rk:RTI}.
Without online optimization of an artificial setpoint, all MPC schemes considered above are initially infeasible.
Furthermore, note that the performance of the $1$-step MPC scheme is superior if compared to that of the $n$-step MPC scheme since the model is updated more frequently (twice as often) for the $1$-step scheme and hence, the influence of the prediction error is smaller.
%On the other hand, our theoretical results are only valid for the $n$-step MPC scheme, whereas similar results for the $1$-step scheme require significantly stronger assumptions on controllability (compare the discussion below Algorithm~\ref{alg:MPC}).
Finally, the LTI-MPC based on the linearization at $x^{\rms\rmr}$ fails to track the desired setpoint.
%does not update the model based on the currently measured state (i.e., using the linearized dynamics $f_{x_0}(x,u)$ at the initial state for prediction) becomes infeasible before reaching the desired target setpoint.
To summarize, in application to a CSTR, the presented tracking MPC scheme using a linearized prediction model leads to a closed-loop performance which is comparable to that of nonlinear tracking MPC while being significantly more computationally efficient.

\begin{figure}
		\begin{center}
		\subfigure[State component $x_1$]
		{\includegraphics[width=0.5\textwidth]{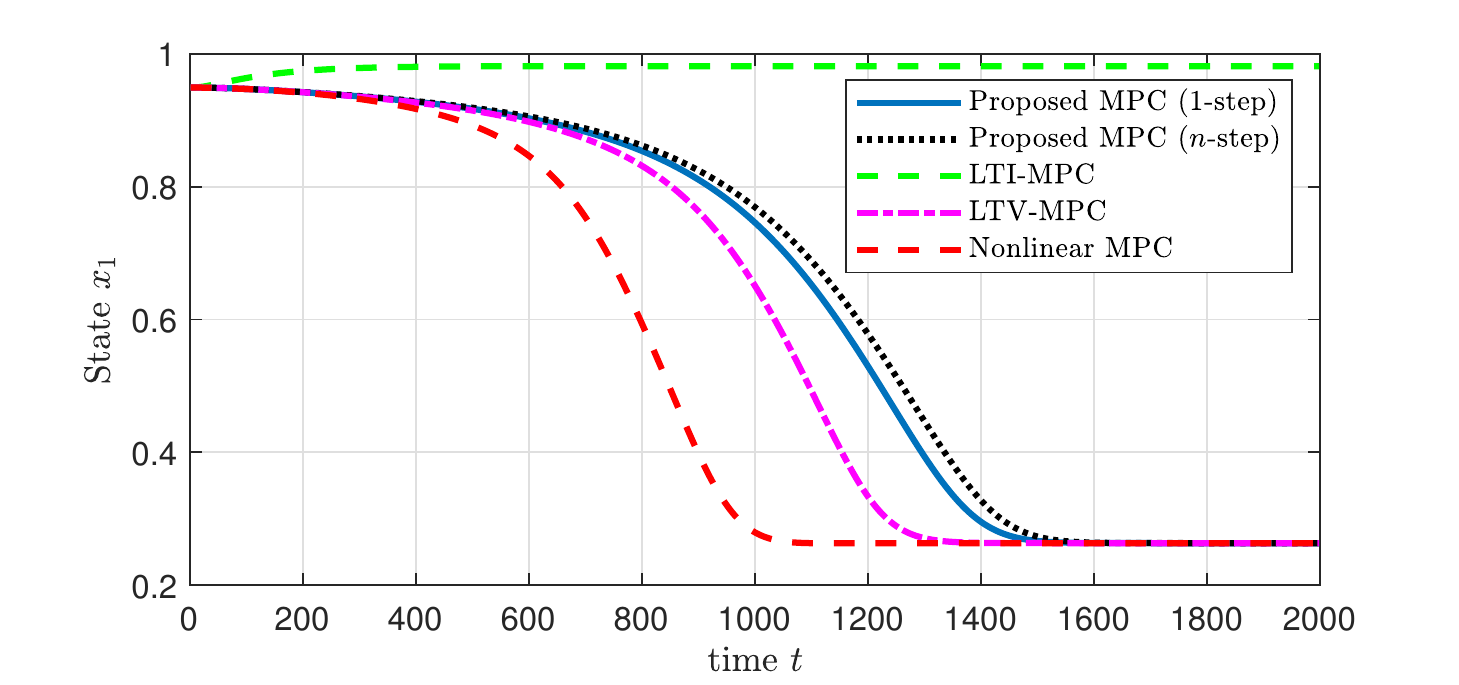}}
		\subfigure[State component $x_2$]
		{\includegraphics[width=0.5\textwidth]{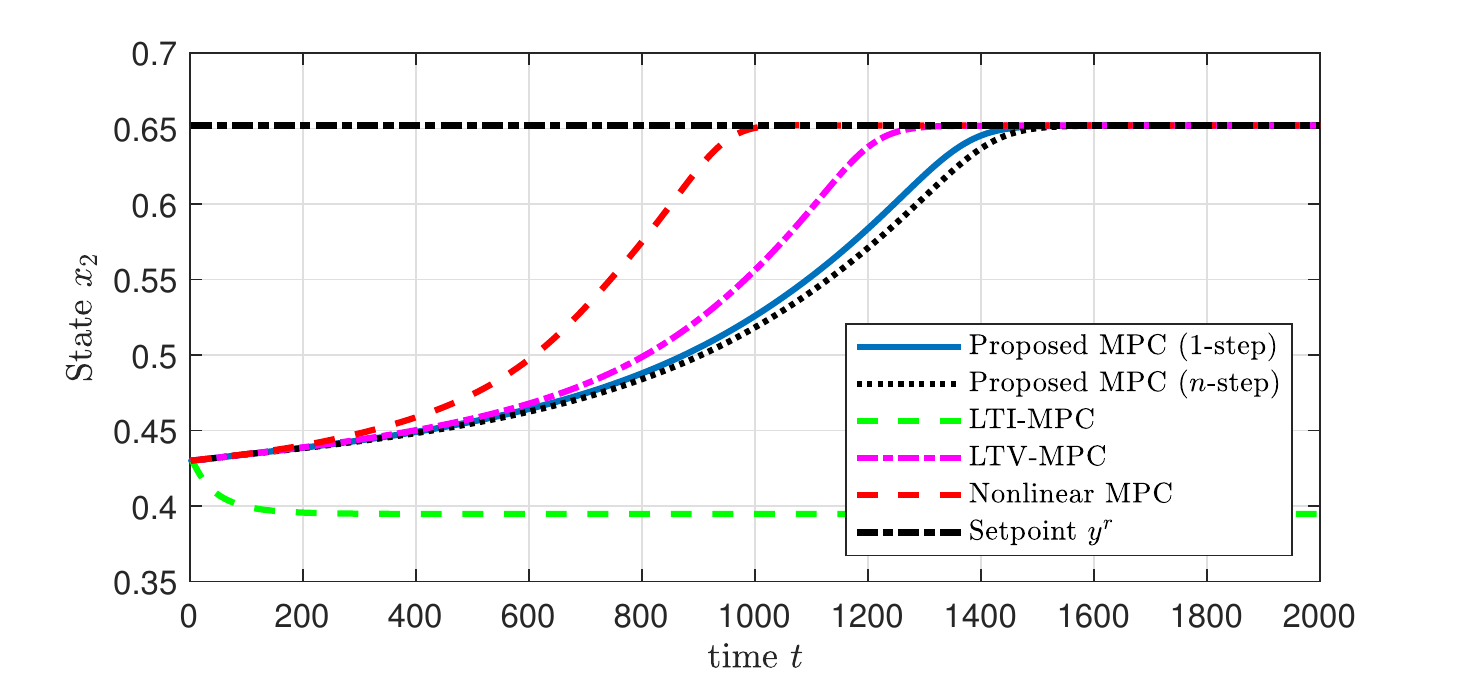}}
		\subfigure[Input component $u$]
		{\includegraphics[width=0.5\textwidth]{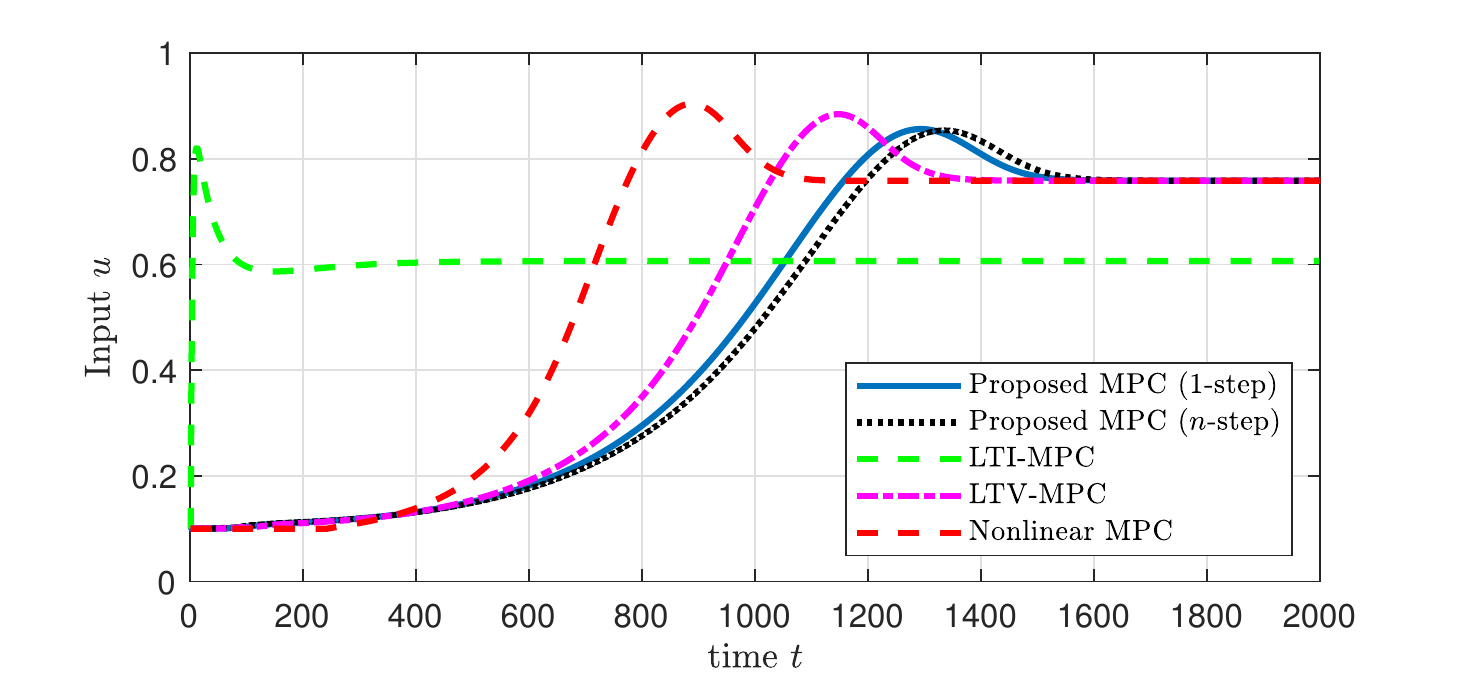}}
		\end{center}
		\caption{State components $x_1$ and $x_2$, and input component $u$ of the numerical example in Section~\ref{sec:example} are illustrated in Subfigures (a), (b) and (c), respectively, with linearization-based $1$-step MPC (solid), linearization-based $n$-step MPC (dotted), LTI-MPC (dashed), LTV-MPC (dash-dotted), and nonlinear MPC (dashed).}	\label{fig:example_sol}
\end{figure}

\section{Conclusion}\label{sec:conclusion}
In this paper, we presented a novel tracking MPC scheme for nonlinear systems using a prediction model based on the linearized dynamics at the current state.
As a key technical contribution, we proved that the optimal reachable equilibrium is exponentially stable in closed loop under reasonable assumptions on the underlying system.
The method was successfully applied to a numerical example where it achieved good performance while being computationally more efficient than a comparable nonlinear tracking MPC scheme.

The presented results build the basis for obtaining theoretical guarantees when using linear prediction models in MPC to control nonlinear systems.
In our companion paper~\cite{berberich2021linearpart2}, we exploit this viewpoint further to design an MPC scheme for unknown nonlinear systems with closed-loop stability guarantees based on linear data-dependent prediction models from behavioral systems theory~\cite{willems2005note}.

\bibliographystyle{IEEEtran}   
\bibliography{Literature}  

\section*{Appendix}
In the following, we provide a proof of Proposition~\ref{prop:rec_feas2} by considering two complementary cases with two different candidate solutions at time $t+n$.
First, in Appendix~\ref{appendix_A}, we prove the statement under the assumption that the tracking cost w.r.t. the artificial steady-state is large, quantified via a suitable inequality.
In Appendix~\ref{appendix_B}, we then consider the complementary case, which, together with the result in Appendix~\ref{appendix_A}, proves the full statement of Proposition~\ref{prop:rec_feas2}.
Finally, we present sufficient conditions for Assumption~\ref{ass:convex_steady_state_manifold} in Appendix~\ref{app:cond_ass}.
\renewcommand\thesection{\Alph{section}}
\setcounter{section}{0}
\section{Proof of Proposition~\ref{prop:rec_feas2} - candidate 1}\label{appendix_A}

\begin{proposition}\label{prop:rec_feas}
Suppose $N\geq n$ and Assumptions~\ref{ass:C2},~\ref{ass:unique_steady_state},~\ref{ass:ctrb},~\ref{ass:exist_equil_state}, and~\ref{ass:steady_state_compact} hold.
Then, there exist $V_{\max},J_{\mathrm{eq}}^{\max}>0$ such that, if $V(x_t)\leq V_{\max}$, $J_{\mathrm{eq},\rmlin}^*(x_t)\leq J_{\mathrm{eq}}^{\max}$, $J_{\mathrm{eq},\rmlin}^*(x_{t+n})\leq J_{\mathrm{eq}}^{\max}$, and there exists $\gamma_1>0$ such that 
\begin{align}\label{eq:prop_case1}
\begin{split}
\sum_{k=0}^{n-1}\lVert\bar{x}_k^*(t)-x^{\rms*}(t)&\rVert_2^2+\lVert\bar{u}_k^*(t)-u^{\rms*}(t)\rVert_2^2\\
&\geq\gamma_1\lVert x^{\rms*}(t)-x^{\rms\rmr}_{\rmlin}(x_t)\rVert_2^2,
\end{split}
\end{align}
then Problem~\eqref{eq:MPC} is feasible at time $t+n$ and there exists a constant $0<c_{\rmV1}<1$ such that $V(x_{t+n})\leq c_{\rmV1}V(x_t)$.
\end{proposition}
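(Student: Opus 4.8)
The plan is to bound $J_N^*(x_{t+n})$ from above by evaluating an explicit candidate solution to Problem~\eqref{eq:MPC} at time $t+n$, and then to use the case hypothesis~\eqref{eq:prop_case1} to turn the resulting cost decrease into the contraction $V(x_{t+n})\le c_{\rmV1}V(x_t)$. First I would fix a compact set $X\subset\mathbb{R}^n$ that contains $x_t$, $x_{t+n}$, and all predicted trajectories at times $t$ and $t+n$ whenever $V(x_t)\le V_{\max}$: this is possible since, by Lemma~\ref{lem:value_fcn_bound}(i) and Assumption~\ref{ass:steady_state_compact}, $V(x_t)\le V_{\max}$ confines $x_t$ to a bounded neighbourhood of the compact union of the steady-state manifolds, while Assumption~\ref{ass:ctrb} bounds the predicted trajectories. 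On $X$ I then have uniform constants from Proposition~\ref{prop:Lin}, from~\eqref{eq:diff_lin}, from Lemma~\ref{lem:value_fcn_bound}, and from Assumptions~\ref{ass:unique_steady_state}--\ref{ass:steady_state_compact}. Throughout, write $\ell\coloneqq\sum_{k=0}^{n-1}\lVert\bar{x}_k^*(t)-x^{\rms*}(t)\rVert_Q^2+\lVert\bar{u}_k^*(t)-u^{\rms*}(t)\rVert_R^2$, let $\ell^{\mathrm{uw}}$ denote the same sum with $Q,R$ replaced by $I$ (so $\lambda_{\min}(Q,R)\ell^{\mathrm{uw}}\le\ell\le\lambda_{\max}(Q,R)\ell^{\mathrm{uw}}$), and note the elementary bounds $\lVert x_t-x^{\rms*}(t)\rVert_2^2\le\ell^{\mathrm{uw}}$ and $\lVert\bar{x}_k^*(t)-x^{\rms*}(t)\rVert_2^2\le\ell^{\mathrm{uw}}$ for $k\in\{0,\dots,n-1\}$ (using $\bar{x}_0^*(t)=x_t$).

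For the candidate at time $t+n$ I would keep the input of the artificial equilibrium, $u^\rms(t+n)\coloneqq u^{\rms*}(t)\in\mathbb{U}^\rms$, take $x^\rms(t+n)$ to be the unique (by Assumption~\ref{ass:exist_equil_state}) equilibrium of the dynamics linearized at $x_{t+n}$ with this input, set $y^\rms(t+n)\coloneqq C_{x_{t+n}}x^\rms(t+n)+Du^\rms(t+n)+r_{x_{t+n}}$, use the shifted previously optimal inputs $\bar{u}_{k+n}^*(t)$ for the first $N-n$ predicted steps propagated through the new linearization, and append on the last $n$ steps a deadbeat input steering the prediction to $x^\rms(t+n)$ (possible by Assumption~\ref{ass:ctrb}). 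The estimates I would establish, all with uniform constants on $X$, are: (a) $\lVert x_{t+n}-\bar{x}_n^*(t)\rVert_2\le c\,\ell^{\mathrm{uw}}$, from Proposition~\ref{prop:Lin}, the exactness~\eqref{eq:f_equals_f_lin} of the linearization in the first step, and a discrete Gr\"onwall recursion; (b) $\ell_{\mathrm{total}}^{\mathrm{uw}}\le c\,\ell^{\mathrm{uw}}$ for the full $N$-step predicted stage cost, obtained by combining the controllability-based bound $J_N^*(x_t)\le\lambda_{\max}(Q,R)\Gamma^2\lVert x_t-x^{\rms*}(t)\rVert_2^2+\lVert y^{\rms*}(t)-y^\rmr\rVert_S^2$ (as in the proof of Lemma~\ref{lem:value_fcn_bound}(ii), steering to $x^{\rms*}(t)$) with $\lVert x_t-x^{\rms*}(t)\rVert_2^2\le\ell^{\mathrm{uw}}$; (c) $\lVert x_{t+n}-x_t\rVert_2\le c\sqrt{\ell^{\mathrm{uw}}}$ and $\lVert x^\rms(t+n)-x^{\rms*}(t)\rVert_2\le c\,\ell^{\mathrm{uw}}$, from~\eqref{eq:diff_lin} and Assumption~\ref{ass:exist_equil_state}; (d) the shifted candidate trajectory stays $\mathcal{O}(\ell^{\mathrm{uw}})$-close to the shifted optimal trajectory, again from~\eqref{eq:diff_lin}; (e) feasibility of the candidate, since the deadbeat inputs deviate from $u^{\rms*}(t)\in\mathrm{int}(\mathbb{U})$ by at most $\Gamma\,\mathcal{O}(\ell^{\mathrm{uw}})$, which lies within the margin between $\mathbb{U}^\rms$ and $\partial\mathbb{U}$ once $V_{\max}$ is small; and (f) $\lvert J_{\mathrm{eq},\rmlin}^*(x_t)-J_{\mathrm{eq},\rmlin}^*(x_{t+n})\rvert\le\ell^{\mathrm{uw}}\varepsilon(V_{\max},J_{\mathrm{eq}}^{\max})$ with $\varepsilon\to0$, because by (c) the two linearization points are $\mathcal{O}(\sqrt{\ell^{\mathrm{uw}}})$ apart, hence by~\eqref{eq:diff_lin} their equilibrium output manifolds are $\mathcal{O}(\ell^{\mathrm{uw}})$-close, and~\eqref{eq:prop_proof_std_norm_ineq} together with $J_{\mathrm{eq},\rmlin}^*\le J_{\mathrm{eq}}^{\max}$ converts this into the stated cost difference.

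Putting these together and using~\eqref{eq:ab_ineq} and~\eqref{eq:prop_proof_std_norm_ineq} to absorb all state- and output-mismatch terms, the candidate cost satisfies $J_N^*(x_{t+n})\le J_N^*(x_t)-\ell+\ell^{\mathrm{uw}}\varepsilon_1(V_{\max},J_{\mathrm{eq}}^{\max})$, so that with $J_N^*=V+J_{\mathrm{eq},\rmlin}^*$ and (f), $V(x_{t+n})\le V(x_t)-\ell+\ell^{\mathrm{uw}}\varepsilon_2(V_{\max},J_{\mathrm{eq}}^{\max})$ with $\varepsilon_2\to0$ as $V_{\max},J_{\mathrm{eq}}^{\max}\to0$. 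Now the case hypothesis~\eqref{eq:prop_case1} gives $\ell^{\mathrm{uw}}\ge\gamma_1\lVert x^{\rms*}(t)-x^{\rms\rmr}_{\rmlin}(x_t)\rVert_2^2$, and since also $\ell^{\mathrm{uw}}\ge\lVert x_t-x^{\rms*}(t)\rVert_2^2$, inequality~\eqref{eq:ab_ineq} yields $\ell^{\mathrm{uw}}\ge\tfrac{\min\{1,\gamma_1\}}{4}\lVert x_t-x^{\rms\rmr}_{\rmlin}(x_t)\rVert_2^2$; combined with Lemma~\ref{lem:value_fcn_bound}(ii) (applicable since $V_{\max}$ is small enough that $\lVert x_t-x^{\rms\rmr}_{\rmlin}(x_t)\rVert_2\le\delta$) this gives $\ell^{\mathrm{uw}}\ge\beta V(x_t)$ with $\beta>0$ depending on $\gamma_1$. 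Hence, choosing $V_{\max},J_{\mathrm{eq}}^{\max}$ small enough that $\varepsilon_2\le\tfrac12\lambda_{\min}(Q,R)$ and all the earlier smallness conditions hold, $V(x_{t+n})\le V(x_t)-(\lambda_{\min}(Q,R)-\varepsilon_2)\ell^{\mathrm{uw}}\le V(x_t)-\tfrac12\lambda_{\min}(Q,R)\beta V(x_t)=c_{\rmV1}V(x_t)$ for some $0<c_{\rmV1}<1$, which also establishes feasibility at time $t+n$.

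The step I expect to be the main obstacle is the bookkeeping behind estimate (b), namely that the \emph{entire} predicted stage cost is bounded by a constant multiple of the stage cost over only the first $n$ steps. This is precisely what allows each accumulated mismatch term — prediction error, change of linearization point, drift of the shifted trajectory, equilibrium-cost difference — to be written as $\ell^{\mathrm{uw}}$ times a factor that vanishes with $V_{\max}$ and $J_{\mathrm{eq}}^{\max}$; without it the error would only be controlled by $V(x_t)+J_{\mathrm{eq},\rmlin}^*(x_t)$, which does not become negligible relative to $V(x_t)$ and would give at best practical rather than exponential stability. A secondary delicate point is estimate (f): one must exploit that within a single $n$-step the state moves only by $\mathcal{O}(\sqrt{\ell^{\mathrm{uw}}})$, so that the two relevant linearizations, although possibly far from the target setpoint, are close to \emph{each other}, whence their equilibrium manifolds are quadratically close.
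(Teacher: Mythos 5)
Your proposal is correct and follows the same architecture as the paper's proof of Proposition~\ref{prop:rec_feas}: the identical candidate (retain $u^{\rms*}(t)$ as the artificial equilibrium input, obtain the new equilibrium state from Assumption~\ref{ass:exist_equil_state}, shift the previously optimal inputs through the new linearization, append a deadbeat tail via Assumption~\ref{ass:ctrb}), the same chain of estimates (prediction error after $n$ steps, drift of the shifted trajectory, shift of the artificial equilibrium and of its output, difference of the equilibrium costs), and the same endgame (the case hypothesis~\eqref{eq:prop_case1} plus the upper bound of Lemma~\ref{lem:value_fcn_bound}(ii) to turn the removed first-$n$-steps stage cost into $-\mathrm{const}\cdot V(x_t)$). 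The one genuine deviation is the bookkeeping: you normalize every error term by $\ell^{\mathrm{uw}}$, which forces you to prove your estimate (b) that the entire $N$-step stage cost is dominated by the first-$n$-steps stage cost. Estimate (b) is correct (it is the Lemma~\ref{lem:value_fcn_bound}(ii) candidate re-targeted at $(x^{\rms*}(t),u^{\rms*}(t))$, feasible once $V_{\max}$ is small), but the paper avoids it entirely: it bounds the whole optimal stage cost by $V(x_t)/\lambda_{\min}(Q)$ in~\eqref{eq:prop_proof_Lyap_sls}, expresses every positive error term as a polynomial in $V(x_t)$ of degree at least two or as $V(x_t)\sqrt{V(x_t)+J_{\mathrm{eq},\rmlin}^*(x_t)}$, and only at the very end uses~\eqref{eq:prop_case1} together with the relaxed bound~\eqref{eq:prop_proof_lax_upper_bound} to show the negative term is of order $V(x_t)$ and therefore dominates. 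Under~\eqref{eq:prop_case1} one has $\beta V(x_t)\le\ell^{\mathrm{uw}}\le V(x_t)/\lambda_{\min}(Q,R)$, so the two accountings are equivalent up to constants, and the step you single out as the main obstacle simply dissolves in the paper's version.

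One caution on your estimate (f): inequality~\eqref{eq:diff_lin} evaluated at a point $z$ gives $c_X\left(\lVert z-x_t\rVert_2^2+\lVert z-x_{t+n}\rVert_2^2\right)+L_f\cdot 0$, i.e., the two affine models are quadratically close in the distance of the \emph{evaluation point} to each linearization point, not in $\lVert x_t-x_{t+n}\rVert_2$; so the claim that the two output-equilibrium manifolds are uniformly $\mathcal{O}(\ell^{\mathrm{uw}})$-close does not follow from the linearization points being $\mathcal{O}(\sqrt{\ell^{\mathrm{uw}}})$ apart. You should instead run the comparison at the specific equilibrium used as candidate, exactly as the paper does in~\eqref{eq:prop_proof_output_equil_diff_end} and~\eqref{eq:prop2_proof_Jeq_xtxtn}; with that fix your bound on $J_{\mathrm{eq},\rmlin}^*(x_t)-J_{\mathrm{eq},\rmlin}^*(x_{t+n})$ coincides with the paper's.
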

\begin{proof}
First, we define the candidate equilibrium $x^\rms\text{$'$}(t+n),u^\rms\text{$'$}(t+n)$ and the first $N-n$ components of the candidate input $\bar{u}'(t+n)$ (Part (i)).
Thereafter, in Part (ii), we derive useful bounds involving this candidate trajectory.
Next, we show that the state of this candidate solution at time $N-n$ is sufficiently close to the candidate artificial steady-state and thus, we can construct a local deadbeat controller to steer the system to this steady-state (Part (iii)).
Finally, we show $V(x_{t+n})\leq c_{\rmV1} V(x_t)$ in Part (iv).

Note that $x_t$ lies in the set $\{x\in\mathbb{R}^n\mid V(x)\leq V_{\max}\}$, which is compact due to the lower bound~\eqref{eq:lem_value_fcn_lower_bound} and Assumption~\ref{ass:steady_state_compact}.
We define the set $X$ as the union of the $N$-step reachable sets of the linearized and the nonlinear dynamics (compare~\cite[Definition 2]{schuermann2018reachset}), starting at $x_t$.
Using that the dynamics~\eqref{eq:nl_sys} are Lipschitz continuous and the input constraints are compact, we conclude that $X$ is compact.
Throughout this proof, whenever we apply Inequality~\eqref{eq:diff_lin}, we use the fact that all involved states lie in $X$ and we use the corresponding constant $c_X$.
\\
\textbf{(i) Definition of candidate solution for $k\in\mathbb{I}_{[0,N-n]}$}\\
We choose the candidate equilibrium input as the old solution, i.e., $u^\rms\text{$'$}(t+n)=u^{\rms*}(t)$.
According to Assumption~\ref{ass:exist_equil_state}, there exists a unique equilibrium state $x^\rms\text{$'$}(t+n)$ for the system linearized at $x_{t+n}$ such that
\begin{align*}
x^\rms\text{$'$}(t+n)=A_{x_{t+n}}x^\rms\text{$'$}(t+n)+Bu^\rms\text{$'$}(t+n)+e_{x_{t+n}}.
\end{align*}
The corresponding output $y^\rms\text{$'$}(t+n)$ is computed via~\eqref{eq:MPC5}.
Further, for $k\in\mathbb{I}_{[0,N-n-1]}$, we choose the candidate input as the previously optimal one, i.e., $\bar{u}_k'(t+n)=\bar{u}_{k+n}^*(t)$.
This leads to the state trajectory candidate $\bar{x}'_{k}(t+n)$, $k\in\mathbb{I}_{[0,N-n]}$, resulting from an open-loop application of $\bar{u}'(t+n)$ with initial condition $x_{t+n}$ to the dynamics linearized at time $t+n$, i.e., $\bar{x}'_0(t+n)=x_{t+n}$ and
\begin{align*}
\bar{x}'_{k+1}(t+n)&=f_{x_{t+n}}(\bar{x}_k'(t+n),\bar{u}_k'(t+n))\\
&=A_{x_{t+n}}\bar{x}'_{k}(t+n)+B\bar{u}_k'(t+n)+e_{x_{t+n}},
\end{align*}
for $k\in\mathbb{I}_{[0,N-n-1]}$.\\
\textbf{(ii) Bounds on candidate solution}\\
Throughout Part (ii) of the proof, let $k\in\mathbb{I}_{[0,N-n]}$.
Further, abbreviate $\underline{q}\coloneqq\lambda_{\min}(Q),\>\>\bar{q}\coloneqq\lambda_{\max}(Q)$, and similarly for $\underline{s}$, $\bar{s}$, $\underline{r}$, $\bar{r}$.
It clearly holds that
\begin{align}\label{eq:prop_proof_Lyap_sls2}
&\sum_{k=0}^{N-1}\lVert \bar{x}_k^*(t)-x^{\rms*}(t)\rVert_Q^2\\\nonumber
\leq&\sum_{k=0}^{N-1}\lVert \bar{x}_k^*(t)-x^{\rms*}(t)\rVert_Q^2+\lVert y^{\rms*}(t)-y^\rmr\rVert_S^2-J_{\mathrm{eq},\rmlin}^*(x_t)\\\nonumber
\leq &J_N^*(x_t)-J_{\mathrm{eq},\rmlin}^*(x_t)= V(x_t),
\end{align}
and hence,
\begin{align}\label{eq:prop_proof_Lyap_sls}
\sum_{k=0}^{N-1}\lVert \bar{x}_k^*(t)-x^{\rms*}(t)\rVert_2^2\leq\frac{1}{\underline{q}}V(x_t).
\end{align}
We bound now several expressions involving the optimal solution at time $t$ and the candidate solution at time $t+n$.\\
\textbf{(ii.a) Bound on $\lVert\bar{x}_n^*(t)-x_{t+n}\rVert_2$}\\
Using~\eqref{eq:diff_lin}, which holds by Assumption~\ref{ass:C2}, and~\eqref{eq:f_equals_f_lin}, we obtain
\begin{align}\label{eq:prop_proof_xn_xtn}
&\lVert\bar{x}_n^*(t)-x_{t+n}\rVert_2\\\nonumber
=&\lVert f_{x_t}(\bar{x}_{n-1}^*(t),\bar{u}_{n-1}^*(t))-f_{x_{t+n-1}}(x_{t+n-1},\bar{u}_{n-1}^*(t))\rVert_2\\\nonumber
\stackrel{\eqref{eq:diff_lin}}{\leq}&c_X\lVert\bar{x}_{n-1}^*(t)-x_t\rVert_2^2+L_f\lVert\bar{x}_{n-1}^*(t)-x_{t+n-1}\rVert_2\\\nonumber
\stackrel{\eqref{eq:ab_ineq}}{\leq}&2c_X\lVert\bar{x}_{n-1}^*(t)-x^{\rms*}(t)\rVert_2^2+2c_X\lVert x^{\rms*}(t)-x_t\rVert_2^2\\\nonumber
&+L_f\lVert\bar{x}_{n-1}^*(t)-x_{t+n-1}\rVert_2\\\nonumber
\stackrel{\eqref{eq:prop_proof_Lyap_sls}}{\leq}&2\frac{c_X}{\underline{q}}V(x_t)+L_f\lVert\bar{x}_{n-1}^*(t)-x_{t+n-1}\rVert_2\\\nonumber
\leq&\dots\leq2\frac{c_X}{\underline{q}}V(x_t)\sum_{k=0}^{n-2}L_f^k,
\end{align}
where the summand for $k=n-1$ vanishes since $\lVert\bar{x}_1^*(t)-x_{t+1}\rVert_2=0$.\\
\textbf{(ii.b) Bound on $\lVert\bar{x}_k'(t+n)-\bar{x}_{k+n}^*(t)\rVert_2$}\\
Define $\{a_k\}_{k=0}^{N-n}$ recursively in dependence of $V(x_t)$ as
\begin{align*}
a_0&\coloneqq2\frac{c_X}{\underline{q}}V(x_t)\sum_{k=0}^{n-2}L_f^k,\\
a_k&\coloneqq2c_X a_{k-1}^2+L_fa_{k-1}+16\frac{c_X}{\underline{q}}V(x_t)\\\nonumber
&\quad+8c_X\left(2\frac{c_X}{\underline{q}}\sum_{k=0}^{n-2}L_f^k\right)^2V(x_t)^2,
\>\> k=1,\dots,N-n.
\end{align*}
In the following, we prove that for any $k\in\mathbb{I}_{[0,N-n]}$
\begin{align}\label{eq:prop_proof_diff_opt}
\lVert \bar{x}_k'(t+n)-\bar{x}_{k+n}^*(t)\rVert_2\leq a_k.
\end{align}
According to~\eqref{eq:prop_proof_xn_xtn} and using $\bar{x}_0'(t+n)=x_{t+n}$, Inequality~\eqref{eq:prop_proof_diff_opt} holds for $k=0$.
Using an induction argument over $k$, we have
\begin{align}\nonumber
&\quad\lVert\bar{x}_k'(t+n)-\bar{x}_{k+n}^*(t)\rVert_2\\\nonumber
&=\lVert f_{x_{t+n}}(\bar{x}_{k-1}'(t+n),\bar{u}_{k+n-1}^*(t))\\\nonumber
&\quad\qquad\quad\qquad-f_{x_t}(\bar{x}_{k+n-1}^*(t),\bar{u}_{k+n-1}^*(t))\rVert_2\\\nonumber
&\stackrel{\eqref{eq:diff_lin}}{\leq}c_X\lVert\bar{x}_{k-1}'(t+n)-x_{t+n}\rVert_2^2
+c_X\lVert\bar{x}_{k+n-1}^*(t)-x_t\rVert_2^2\\\nonumber
&\quad+L_f\lVert\bar{x}_{k-1}'(t+n)-\bar{x}_{k+n-1}^*(t)\rVert_2\\\nonumber
&\stackrel{\eqref{eq:ab_ineq}}{\leq} 2c_X\lVert\bar{x}_{k-1}'(t+n)-\bar{x}_{k+n-1}^*(t)\rVert_2^2\\\nonumber
&\quad+L_f\lVert\bar{x}_{k-1}'(t+n)-\bar{x}_{k+n-1}^*(t)\rVert_2\\\nonumber
&\quad+2c_X\lVert\bar{x}_{k+n-1}^*(t)-x_{t+n}\rVert_2^2+c_X\lVert\bar{x}_{k+n-1}^*(t)-x_t\rVert_2^2\\\nonumber
&\stackrel{\eqref{eq:ab_ineq},\eqref{eq:prop_proof_diff_opt}}{\leq}
2c_X a_{k-1}^2+L_fa_{k-1}+6c_X\lVert\bar{x}_{k+n-1}^*(t)-x^{\rms*}(t)\rVert_2^2\\\nonumber
&\quad+4c_X\lVert x^{\rms*}(t)-x_{t+n}\rVert_2^2+2c_X\lVert x^{\rms*}(t)-x_t\rVert_2^2\\\nonumber
&\stackrel{\eqref{eq:ab_ineq},\eqref{eq:prop_proof_Lyap_sls}}{\leq}
2c_X a_{k-1}^2+L_fa_{k-1}+8\frac{c_X}{\underline{q}}V(x_t)\\\nonumber
&\quad+8c_X\lVert x^{\rms*}(t)-\bar{x}_n^*(t)\rVert_2^2+8c_X\lVert\bar{x}_n^*(t)-x_{t+n}\rVert_2^2\\\nonumber
&\stackrel{\eqref{eq:prop_proof_Lyap_sls},\eqref{eq:prop_proof_xn_xtn}}{\leq}
2c_X a_{k-1}^2+L_fa_{k-1}+16\frac{c_X}{\underline{q}}V(x_t)\\\nonumber
&\qquad+8c_X\left(2\frac{c_X}{\underline{q}}\sum_{k=0}^{n-2}L_f^k\right)^2V(x_t)^2=a_k,
\end{align}
which proves~\eqref{eq:prop_proof_diff_opt}.
Note that $a_k$ is a polynomial in $V(x_t)$ which becomes arbitrarily small if $V(x_t)$ is sufficiently small.\\
\textbf{(ii.c) Bound on $\lVert x^{\rms*}(t)-x^\rms\text{$'$}(t+n)\rVert_2$}\\
Note that
\begin{align*}
(I-A_{x_t})x^{\rms*}(t)&=Bu^{\rms*}(t)+e_{x_t},\\
(I-A_{x_{t+n}})x^\rms\text{$'$}(t+n)&=Bu^\rms\text{$'$}(t+n)+e_{x_{t+n}}.
\end{align*}
Using additionally $u^\rmsp(t+n)=u^{\rms*}(t)$,
this implies
\begin{align*}
&(I-A_{x_{t+n}})x^\rms\text{$'$}(t+n)\\
=&x^{\rms*}(t)-x^{\rms*}(t)+Bu^\rms\text{$'$}(t+n)+e_{x_{t+n}}\\
=&(I-A_{x_t})x^{\rms*}(t)+e_{x_{t+n}}-e_{x_t}.
\end{align*}
Using that $(I-A_{x_{t+n}})$ is invertible by Assumption~\ref{ass:exist_equil_state}, we obtain
\begin{align*}
x^\rms\text{$'$}(t+n)&=(I-A_{x_{t+n}})^{-1}\big((I-A_{x_t})x^{\rms*}(t)+e_{x_{t+n}}-e_{x_t}\big)\\
&=x^{\rms*}(t)+(I-A_{x_{t+n}})^{-1}\big((A_{x_{t+n}}-A_{x_t})x^{\rms*}(t)\\
&\quad+e_{x_{t+n}}-e_{x_t}\big).
\end{align*}
Moreover, Assumption~\ref{ass:exist_equil_state} implies $\lVert (I-A_{x_{t+n}})^{-1}\rVert_2\leq\frac{1}{\underline{\sigma}}$ and hence, we arrive at
\begin{align}\label{eq:prop_proof_equil_diff_aux}
&\lVert x^\rms\text{$'$}(t+n)-x^{\rms*}(t)\rVert_2\\\nonumber
\leq&\frac{1}{\underline{\sigma}}\lVert f_{x_{t+n}}(x^{\rms*}(t),u^{\rms*}(t))-f_{x_t}(x^{\rms*}(t),u^{\rms*}(t))\rVert_2\\\nonumber
\stackrel{\eqref{eq:diff_lin}}{\leq}&\frac{c_X}{\underline{\sigma}}
\left(\lVert x^{\rms*}(t)-x_{t+n}\rVert_2^2+\lVert x^{\rms*}(t)-x_t\rVert_2^2\right).
\end{align}
Together with~\eqref{eq:prop_proof_Lyap_sls} and~\eqref{eq:prop_proof_xn_xtn}, this implies
\begin{align}\label{eq:prop_proof_equil_diff}
&\lVert x^\rms\text{$'$}(t+n)-x^{\rms*}(t)\rVert_2\\\nonumber
\stackrel{\eqref{eq:ab_ineq},\eqref{eq:prop_proof_Lyap_sls}}{\leq}&\frac{c_X}{\underline{\sigma}}(2\lVert x^{\rms*}(t)-\bar{x}_n^*(t)\rVert_2^2+2\lVert\bar{x}_n^*(t)-x_{t+n}\rVert_2^2+\frac{V(x_t)}{\underline{q}})\\\nonumber
\stackrel{\eqref{eq:prop_proof_Lyap_sls}}{\leq}&\frac{c_X}{\underline{\sigma}}\left(3\frac{V(x_t)}{\underline{q}}+2\lVert\bar{x}_n^*(t)-x_{t+n}\rVert_2^2\right)\\\nonumber
\stackrel{\eqref{eq:prop_proof_xn_xtn}}{\leq}&\frac{c_X}{\underline{\sigma}}\left(
3\frac{V(x_t)}{\underline{q}}+2\left(2\frac{c_X}{\underline{q}}\sum_{k=0}^{n-2}L_f^k\right)^2V(x_t)^2\right)\\\nonumber
\eqqcolon &c_1 V(x_t)^2+c_2V(x_t).
\end{align}
\textbf{(ii.d) Bound on $\lVert y^\rms\text{$'$}(t+n)-y^{\rms*}(t)\rVert_2$}\\
By using an inequality of the form~\eqref{eq:diff_lin} for the vector field $h$ (note that $h$ is sufficiently smooth by Assumption~\ref{ass:C2}), there exist constants $c_{Xh},L_h\geq0$ such that
\begin{align*}
&\lVert y^\rms\text{$'$}(t+n)-y^{\rms*}(t)\rVert_2\\
=&\lVert h_{x_{t+n}}(x^\rms\text{$'$}(t+n),u^{\rms*}(t))-h_{x_t}(x^{\rms*}(t),u^{\rms*}(t))\rVert_2\\
\leq&c_{Xh}\lVert x^\rms\text{$'$}(t+n)-x_{t+n}\rVert_2^2+c_{Xh}\lVert x^{\rms*}(t)-x_t\rVert_2^2\\
&+L_h\lVert x^\rms\text{$'$}(t+n)-x^{\rms*}(t)\rVert_2\\
\stackrel{\eqref{eq:prop_proof_Lyap_sls},\eqref{eq:prop_proof_equil_diff}}{\leq}&c_{Xh}\lVert x^\rms\text{$'$}(t+n)-x_{t+n}\rVert_2^2+\frac{c_{Xh}}{\underline{q}}V(x_t)\\
&+L_h(c_1V(x_t)^2+c_2V(x_t)).
\end{align*}
Moreover, using $\lVert a+b+c\rVert_2^2\leq2\lVert a\rVert_2^2+4\lVert b\rVert_2^2+4\lVert c\rVert_2^2$, which holds for arbitrary $a$, $b$, $c$ due to~\eqref{eq:ab_ineq}, we obtain
\begin{align*}
&\lVert x^\rms\text{$'$}(t+n)-x_{t+n}\rVert_2^2\leq2\lVert x^\rms\text{$'$}(t+n)-x^{\rms*}(t)\rVert_2^2\\
&+4\lVert x^{\rms*}(t)-\bar{x}_{n}^*(t)\rVert_2^2+4\Vert\bar{x}_n^*(t)-x_{t+n}\rVert_2^2\\
&\stackrel{\eqref{eq:prop_proof_Lyap_sls},\eqref{eq:prop_proof_xn_xtn},\eqref{eq:prop_proof_equil_diff}}{\leq}
2(c_1V(x_t)^2+c_2V(x_t))^2+\frac{4}{\underline{q}}V(x_t)\\
&\qquad\qquad+4\left(2\frac{c_X}{\underline{q}}V(x_t)\sum_{k=0}^{n-2}L_f^k\right)^2.
\end{align*}
Hence, using $V(x_t)\leq V_{\max}$, there exists $c_3>0$ such that
\begin{align}\label{eq:prop_proof_equil_diff_output}
\lVert y^\rms\text{$'$}(t+n)-y^{\rms*}(t)\rVert_2\leq c_3V(x_t).
\end{align}
\\
\textbf{(iii) Appending deadbeat controller}\\
In the following, we show that for $V_{\max}$ sufficiently small $x^\rms\text{$'$}(t+n)$ is sufficiently close to $\bar{x}_{N-n}'(t+n)$ such that we can append a deadbeat controller steering the state to $x^\rms\text{$'$}(t+n)$ in $n$ steps.
To be precise, combining~\eqref{eq:prop_proof_diff_opt} with $k=N-n$ and~\eqref{eq:prop_proof_equil_diff}, we obtain
\begin{align}\label{eq:prop_proof_traj_vs_equil}
&\lVert\bar{x}_{N-n}'(t+n)-x^\rms\text{$'$}(t+n)\rVert_2\\\nonumber
\leq&\lVert\bar{x}_{N-n}'(t+n)-\bar{x}_{N}^*(t)\rVert_2+\lVert\bar{x}_{N}^*(t)-x^{\rms*}(t)\rVert_2\\\nonumber
&+\lVert x^{\rms*}(t)-x^\rms\text{$'$}(t+n)\rVert_2\\\nonumber
\stackrel{\eqref{eq:prop_proof_diff_opt},\eqref{eq:prop_proof_equil_diff}}{\leq}&a_{N-n}+c_1V(x_t)^2+c_2V(x_t),
\end{align}
where for the second inequality we used that $\bar{x}_N^*(t)=x^{\rms*}(t)$ due to the terminal equality constraint~\eqref{eq:MPC2}.
Assumption~\ref{ass:ctrb} implies the existence of an input $\bar{u}_k'(t+n),k\in\mathbb{I}_{[N-n,N-1]}$, steering the state to $\bar{x}_N'(t+n)=x^\rms\text{$'$}(t+n)$ while satisfying
\begin{align}\label{eq:prop_proof_traj_vs_equil2}
&\sum_{k=N-n}^{N-1}\lVert\bar{x}_k'(t+n)-x^\rms\text{$'$}(t+n)\rVert_2\\\nonumber
&\qquad\>\>\>+\lVert\bar{u}_k'(t+n)-u^\rms\text{$'$}(t+n)\rVert_2\\\nonumber
&\stackrel{\eqref{eq:ass_ctrb}}{\leq}\Gamma\lVert \bar{x}_{N-n}'(t+n)-x^\rms\text{$'$}(t+n)\rVert_2\\\nonumber
&\stackrel{\eqref{eq:prop_proof_traj_vs_equil}}{\leq}\Gamma(a_{N-n}+c_1V(x_t)^2+c_2V(x_t)).
\end{align}
If $V_{\max}$ and hence $V(x_t)$ and $a_{N-n}$ are sufficiently small, then $\bar{u}_k'(t+n)\in\mathbb{U}$ for $k\in\mathbb{I}_{[N-n,N-1]}$ (note that $u^\rms\text{$'$}(t+n)\in\text{int}(\mathbb{U})$), i.e., the candidate input satisfies the input constraints.\\
\textbf{(iv) Invariance of $V(x_t)\leq V_{\max}$}\\
So far, we have only shown that the MPC scheme is feasible at time $t+n$.
It remains to be shown that there exists a constant $0<c_{\rmV1}<1$ such that $V(x_{t+n})\leq c_{\rmV1} V(x_t)$.
Note that
\begin{align}\label{eq:prop_proof_stage_cost_bound1}
&J_N^*(x_{t+n})-J_N^*(x_t)\\\nonumber
\leq&\sum_{k=0}^{N-1}\lVert \bar{x}_k'(t+n)-x^\rms\text{$'$}(t+n)\rVert_Q^2+\lVert y^\rms\text{$'$}(t+n)-y^\rmr\rVert_S^2\\\nonumber
&+\sum_{k=0}^{N-1}\lVert \bar{u}_k'(t+n)-u^\rms\text{$'$}(t+n)\rVert_R^2-\lVert y^{\rms*}(t)-y^\rmr\rVert_S^2\\\nonumber
&-\sum_{k=0}^{N-1}(\lVert\bar{x}_k^*(t)-x^{\rms*}(t)\rVert_Q^2+\lVert\bar{u}_k^*(t)-u^{\rms*}(t)\rVert_R^2).
\end{align}
We now bound several terms on the right-hand side of~\eqref{eq:prop_proof_stage_cost_bound1} separately.
The definition of the input candidate implies
\begin{align*}
&\sum_{k=0}^{N-1}\lVert \bar{u}_k'(t+n)-u^\rms\text{$'$}(t+n)\rVert_R^2-\lVert \bar{u}_k^*(t)-u^{\rms*}(t)\rVert_R^2\\
\stackrel{\eqref{eq:prop_proof_traj_vs_equil2}}{\leq}&-\sum_{k=0}^{n-1}\lVert\bar{u}_k^*(t)-u^{\rms*}(t)\rVert_R^2\\
&+\bar{r}\cdot\Gamma^2(a_{N-n}+c_1V(x_t)^2+c_2V(x_t))^2.
\end{align*}
Using $\lVert y^{\rms*}(t)-y^\rmr\rVert_S\leq \sqrt{J_N^*(x_t)}$, we obtain
\begin{align}\label{eq:prop_proof_dummy}
&\lVert y^\rms\text{$'$}(t+n)-y^\rmr\rVert_S^2-\lVert y^{\rms*}(t)-y^\rmr\rVert_S^2\\\nonumber
\stackrel{\eqref{eq:prop_proof_std_norm_ineq}}{\leq}&\lVert y^\rms\text{$'$}(t+n)-y^{\rms*}(t)\rVert_S^2\\\nonumber
&+2\lVert y^\rms\text{$'$}(t+n)-y^{\rms*}(t)\rVert_S\lVert y^{\rms*}(t)-y^\rmr\rVert_S\\\nonumber
\leq&\lVert y^\rms\text{$'$}(t+n)-y^{\rms*}(t)\rVert_S^2\\\nonumber
&+2\lVert y^\rms\text{$'$}(t+n)-y^{\rms*}(t)\rVert_S\sqrt{V(x_t)+J_{\mathrm{eq},\rmlin}^*(x_t)}\\\nonumber
\stackrel{\eqref{eq:prop_proof_equil_diff_output}}{\leq}&\bar{s}c_3^2V(x_t)^2+2\sqrt{\bar{s}}c_3V(x_t)\sqrt{V(x_t)+J_{\mathrm{eq},\rmlin}^*(x_t)}.
\end{align}
Finally, note that
\begin{align*}
&\sum_{k=0}^{N-1}\lVert\bar{x}_k'(t+n)-x^\rms\text{$'$}(t+n)\rVert_Q^2-\lVert\bar{x}_k^*(t)-x^{\rms*}(t)\rVert_Q^2\\
&=-\sum_{k=0}^{n-1}\lVert\bar{x}_k^*(t)-x^{\rms*}(t)\rVert_Q^2\\
&+\sum_{k=0}^{N-n-1}\lVert\bar{x}_k'(t+n)-x^\rms\text{$'$}(t+n)\rVert_Q^2-\lVert\bar{x}_{k+n}^*(t)-x^{\rms*}(t)\rVert_Q^2\\
&+\underbrace{\sum_{k=N-n}^{N-1}\lVert\bar{x}_k'(t+n)-x^\rms\text{$'$}(t+n)\rVert_Q^2}_{\stackrel{\eqref{eq:prop_proof_traj_vs_equil2}}{\leq}\bar{q}\Gamma^2(a_{N-n}+c_1V(x_t)^2+c_2V(x_t))^2}.
\end{align*}
We bound the second sum on the right-hand side further as
\begin{align*}
&\sum_{k=0}^{N-n-1}\lVert\bar{x}_k'(t+n)-x^\rms\text{$'$}(t+n)\rVert_Q^2-\lVert\bar{x}_{k+n}^*(t)-x^{\rms*}(t)\rVert_Q^2\\
&\stackrel{\eqref{eq:prop_proof_std_norm_ineq}}{\leq}\sum_{k=0}^{N-n-1}\lVert\bar{x}_k'(t+n)-x^\rms\text{$'$}(t+n)-\bar{x}_{k+n}^*(t)+x^{\rms*}(t)\rVert_Q^2\\
&\quad+2\lVert\bar{x}_k'(t+n)-x^\rms\text{$'$}(t+n)-\bar{x}_{k+n}^*(t)+x^{\rms*}(t)\rVert_Q\\
&\quad\quad\cdot\lVert\bar{x}_{k+n}^*(t)-x^{\rms*}(t)\rVert_Q\\
&\stackrel{\eqref{eq:ab_ineq},\eqref{eq:prop_proof_Lyap_sls2}}{\leq}\sum_{k=0}^{N-n-1}2\lVert\bar{x}_k'(t+n)-\bar{x}_{k+n}^*(t)\rVert_Q^2\\
&\quad+2\lVert x^{\rms*}(t)-x^\rms\text{$'$}(t+n)\rVert_Q^2\\
&\quad+2\sqrt{V(x_t)}\lVert\bar{x}_k'(t+n)-\bar{x}_{k+n}^*(t)\rVert_Q\\
&\quad+2\sqrt{V(x_t)}\lVert x^{\rms*}(t)-x^\rms\text{$'$}(t+n)\rVert_Q\\
&\stackrel{\eqref{eq:prop_proof_diff_opt},\eqref{eq:prop_proof_equil_diff}}{\leq}
\sum_{k=0}^{N-n-1}2\bar{q}(a_k^2+(c_1V(x_t)^2+c_2V(x_t))^2)\\
&\qquad+2\sqrt{\bar{q}V(x_t)}(a_k+c_1V(x_t)^2+c_2V(x_t)).
\end{align*}
Inserting all of the derived bounds into~\eqref{eq:prop_proof_stage_cost_bound1}, we arrive at
\begin{align}\label{eq:prop_proof_stage_cost_bound2}
&J_N^*(x_{t+n})-J_N^*(x_t)\\\nonumber
\leq&-\sum_{k=0}^{n-1}(\lVert\bar{x}_k^*(t)-x^{\rms*}(t)\rVert_Q^2+\lVert\bar{u}_k^*(t)-u^{\rms*}(t)\rVert_R^2)\\\nonumber
&+\sum_{k=0}^{N-n-1}\Big(2\bar{q}(a_k^2+(c_1V(x_t)^2+c_2V(x_t))^2)\\\nonumber
&\qquad\qquad+2\sqrt{\bar{q}V(x_t)}(a_k+c_1V(x_t)^2+c_2V(x_t)\Big)\\\nonumber
&+(\bar{q}+\bar{r})\Gamma^2(a_{N-n}+c_1V(x_t)^2+c_2V(x_t))^2\\\nonumber
&+\bar{s}c_3^2V(x_t)^2+2\sqrt{\bar{s}}c_3V(x_t)\sqrt{V(x_t)+J_{\mathrm{eq},\rmlin}^*(x_t)}.
\end{align}
%
%This implies the following bound for any $x_t\in\mathbb{R}^n$
%%
%\begin{align*}
%V(x_t)\leq\underbrace{\max\{c_2,\frac{V_{\max}}{\delta^2}\}}_{c_{2,V_{\max}}\coloneqq}\lVert x_t-x^{\rms\rmr}\rVert_2^2.
%\end{align*}
%
Note that~\eqref{eq:prop_case1} implies
\begin{align}\label{eq:prop_proof_stage_cost_bound3}
&-\sum_{k=0}^{n-1}(\lVert\bar{x}_k^*(t)-x^{\rms*}(t)\rVert_Q^2+\lVert\bar{u}_k^*(t)-u^{\rms*}(t)\rVert_R^2)\\\nonumber
&\stackrel{\eqref{eq:prop_case1}}{\leq}-\frac{1}{2}\sum_{k=0}^{n-1}(\lVert\bar{x}_k^*(t)-x^{\rms*}(t)\rVert_Q^2+\lVert\bar{u}_k^*(t)-u^{\rms*}(t)\rVert_R^2)\\\nonumber
&\quad-\frac{\gamma_1\min\{\underline{q},\underline{r}\}}{2}\lVert x^{\rms*}(t)-x^{\rms\rmr}_{\rmlin}(x_t)\rVert_2^2\\\nonumber
&\stackrel{\eqref{eq:ab_ineq}}{\leq}-\frac{\min\{\underline{q},\underline{r}\}\cdot\min\{1,\gamma_1\}}{4}\lVert x_t-x^{\rms\rmr}_{\rmlin}(x_t)\rVert_2^2.
\end{align}
The local upper bound~\eqref{eq:lem_value_fcn_upper_bound}, which holds for $\lVert x_t-x^{\rms\rmr}_{\rmlin}(x_t)\rVert_2\leq\delta$, implies that for all $x_t\in X$
\begin{align}\label{eq:prop_proof_lax_upper_bound}
V(x_t)\leq c_{\rmu,\rmV}\lVert x_t-x^{\rms\rmr}_{\rmlin}(x_t)\rVert_2^2,
\end{align}
where $c_{\rmu,\rmV}\coloneqq\max\{\frac{V_{\max}}{\delta^2},c_\rmu\}$.
Thus, we obtain
\begin{align}\label{eq:prop_proof_state_negative}
&-\sum_{k=0}^{n-1}(\lVert\bar{x}_k^*(t)-x^{\rms*}(t)\rVert_Q^2+\lVert\bar{u}_k^*(t)-u^{\rms*}(t)\rVert_R^2)\\\nonumber
&\stackrel{\eqref{eq:prop_proof_stage_cost_bound3},\eqref{eq:prop_proof_lax_upper_bound}}{\leq}-\frac{\min\{\underline{q},\underline{r}\}\cdot\min\{1,\gamma_1\}}{4c_{\rmu,\rmV}}V(x_t).
\end{align}
Note that all positive terms on the right-hand side of~\eqref{eq:prop_proof_stage_cost_bound2} are either at least of order $V(x_t)^{2}$, or they are of order $V(x_t)$ but are multiplied by $\sqrt{V(x_t)+J_{\mathrm{eq},\rmlin}^*(x_t)}$.
Hence, if we plug~\eqref{eq:prop_proof_state_negative} into~\eqref{eq:prop_proof_stage_cost_bound2} and choose $V_{\max}$ and $J_{\mathrm{eq}}^{\max}$ sufficiently small, then we obtain
\begin{align*}
J_N^*(x_{t+n})-J_N^*(x_{t})\leq(\tilde{c}_{\rmV1}-1)V(x_t),
\end{align*}
for some $0<\tilde{c}_{\rmV1}<1$.
This implies
\begin{align}\label{eq:prop_proof_V}
V(x_{t+n})=&J_N^*(x_{t+n})-J_{\mathrm{eq},\rmlin}^*(x_{t+n})\\\nonumber
\leq&J_N^*(x_t)+(\tilde{c}_{\rmV1}-1)V(x_t)-J_{\mathrm{eq},\rmlin}^*(x_{t+n})\\\nonumber
=&\tilde{c}_{\rmV1}V(x_t)+J_{\mathrm{eq},\rmlin}^*(x_t)-J_{\mathrm{eq},\rmlin}^*(x_{t+n}).
\end{align}
As the last step in the proof, we now derive a bound on $J_{\mathrm{eq},\rmlin}^*(x_t)-J_{\mathrm{eq},\rmlin}^*(x_{t+n})$.
First, we define a candidate solution to the optimization problem~\eqref{eq:opt_equil_prob_lin} with optimal cost $J_{\mathrm{eq},\rmlin}^*(x_t)$.
The input candidate is defined as $\tilde{u}^\rms=u^{\rms\rmr}_{\rmlin}(x_{t+n})$, i.e., the optimal reachable equilibrium input for the linearized system at $x_{t+n}$.
The state and output candidates are chosen as the corresponding equilibria for the dynamics linearized at $x_t$, i.e., 
\begin{align*}
\tilde{x}^\rms&=A_{x_t}\tilde{x}^\rms+B\tilde{u}^\rms+e_{x_t},\\
\tilde{y}^\rms&=C_{x_t}\tilde{x}^\rms+D\tilde{u}^\rms+r_{x_t}.
\end{align*}
Note that such $\tilde{x}^\rms$ (and hence also $\tilde{y}^\rms$) exists due to Assumption~\ref{ass:exist_equil_state}.
Following the same steps leading to~\eqref{eq:prop_proof_equil_diff_output}, it can be shown that there exists $\tilde{c}>0$ such that
\begin{align}\label{eq:prop_proof_output_equil_diff_end}
\lVert\tilde{y}^\rms-y^{\rms\rmr}_{\rmlin}(x_{t+n})\rVert_S\leq\tilde{c}V(x_t).
\end{align}
Hence, by optimality, we obtain
\begin{align}\label{eq:prop2_proof_Jeq_xtxtn}
&J_{\mathrm{eq},\rmlin}^*(x_t)-J_{\mathrm{eq},\rmlin}^*(x_{t+n})\\\nonumber
\leq&\lVert\tilde{y}^\rms-y^\rmr\rVert_S^2-\lVert y^{\rms\rmr}_{\rmlin}(x_{t+n})-y^\rmr\rVert_S^2\\\nonumber
\stackrel{\eqref{eq:prop_proof_std_norm_ineq}}{\leq}&\lVert \tilde{y}^\rms-y^{\rms\rmr}_{\rmlin}(x_{t+n})\rVert_S^2\\\nonumber
&+2\lVert\tilde{y}^\rms-y^{\rms\rmr}_{\rmlin}(x_{t+n})\rVert_S\lVert y^{\rms\rmr}_{\rmlin}(x_{t+n})-y^\rmr\rVert_S\\\nonumber
\stackrel{\eqref{eq:prop_proof_output_equil_diff_end}}{\leq}&\tilde{c}^2V(x_t)^2+2\tilde{c}V(x_t)\sqrt{J_{\mathrm{eq},\rmlin}^*(x_{t+n})}\\\nonumber
\leq&\tilde{c}^2V(x_t)^2+2\tilde{c}V(x_t)\sqrt{J_{\mathrm{eq}}^{\max}}.
\end{align}
Combining~\eqref{eq:prop_proof_V} and~\eqref{eq:prop2_proof_Jeq_xtxtn}, we conclude that, for $V_{\max},J_{\mathrm{eq}}^{\max}$ sufficiently small, there exists $0<c_{\rmV1}<1$ such that $V(x_{t+n})\leq c_{\rmV1}V(x_t)$.
\end{proof}

\section{Proof of Proposition~\ref{prop:rec_feas2} - candidate 2}\label{appendix_B}
\begin{proof}
In the following, we prove Proposition~\ref{prop:rec_feas2}, i.e., we prove the result in Proposition~\ref{prop:rec_feas} without assuming Inequality~\eqref{eq:prop_case1}.
This is done by showing that the statement remains true if~\eqref{eq:prop_case1} does not hold.
More precisely, we show that there exists $\gamma_1>0$ such that the statement of Proposition~\ref{prop:rec_feas2} holds if 
\begin{align}\label{eq:prop_proof_case21}
\sum_{k=0}^{n-1}\lVert \bar{x}_k^*(t)-x^{\rms*}(t)\rVert_2^2&+\lVert \bar{u}_k^*(t)-u^{\rms*}(t)\rVert_2^2\\\nonumber
&\leq\gamma_1\lVert x^{\rms*}(t)-x^{\rms\rmr}_{\rmlin}(x_t)\rVert_2^2.
\end{align}
Note that~\eqref{eq:prop_proof_case21} implies the existence of some $\tilde{\gamma}>0$ such that
\begin{align}\label{eq:prop_proof_case2}
\sum_{k=0}^{n-1}\lVert \bar{x}_k^*(t)-x^{\rms*}(t)\rVert_2&+\lVert \bar{u}_k^*(t)-u^{\rms*}(t)\rVert_2\\\nonumber
&\leq\tilde{\gamma}\sqrt{\gamma_1}\lVert x^{\rms*}(t)-x^{\rms\rmr}_{\rmlin}(x_t)\rVert_2.
\end{align}
\textbf{(i) Definition of candidate solution}\\
We consider now a different candidate solution at time $t+n$, where the artificial equilibrium input is defined as a convex combination of the optimal artificial equilibrium at time $t$ and the optimal reachable equilibrium input given the system dynamics linearized at $x_t$, i.e.,
\begin{align*}
\hat{u}^\rms(t+n)=\lambda u^{\rms*}(t)+(1-\lambda)u^{\rms\rmr}_{\rmlin}(x_t)
\end{align*}
for some $\lambda\in(0,1)$ which will be fixed later in the proof.
We choose the artificial equilibrium $\hat{x}^\rms(t+n)$ as the corresponding equilibrium state satisfying~\eqref{eq:MPC4} (note that $\hat{x}^\rms(t+n)$ exists due to Assumption~\ref{ass:exist_equil_state}) and the output $\hat{y}^\rms(t+n)$ such that~\eqref{eq:MPC5} holds.
In the following, we show that $x_{t+n}$ is sufficiently close to $\hat{x}^\rms(t+n)$ such that we can steer the system to $\hat{x}^\rms(t+n)$ in $L$ steps.
It follows from the proof of Proposition~\ref{prop:rec_feas} that
\begin{align}\label{eq:prop2_proof_candidate1}
&\lVert x_{t+n}-x^{\rms*}(t)\rVert_2^2\\\nonumber
\stackrel{\eqref{eq:ab_ineq}}{\leq} &2\lVert x_{t+n}-\bar{x}_n^*(t)\rVert_2^2+2\lVert\bar{x}_n^*(t)-x^{\rms*}(t)\rVert_2^2\\\nonumber
\stackrel{\eqref{eq:prop_proof_Lyap_sls},\eqref{eq:prop_proof_xn_xtn}}{\leq}&2\left(2\frac{c_X}{\underline{q}}V(x_t)\sum_{k=0}^{n-2}L_f^k\right)^2+\frac{2}{\underline{q}}V(x_t).
\end{align}
We define $\tilde{x}^\rms\coloneqq\lambda x^{\rms*}(t)+(1-\lambda)x^{\rms\rmr}_{\rmlin}(x_t)$ as the steady-state corresponding to the input $\hat{u}^\rms(t+n)$ for the dynamics linearized at $x_t$.
Note that
\begin{align}\label{eq:prop2_proof_candidate2}
&\lVert x^{\rms*}(t)-\tilde{x}^\rms\rVert_2=(1-\lambda)\lVert x^{\rms*}(t)-x^{\rms\rmr}_{\rmlin}(x_t)\rVert_2\\\nonumber
\leq &\hat{c}_\rml(1-\lambda)\lVert y^{\rms*}(t)-y^{\rms\rmr}_{\rmlin}(x_t)\rVert_2
%\leq&c_\rml(1-\lambda)(\lVert y^{\rms*}(t)-y^\rmr\rVert_2+\lVert y^{\rms\rmr}_{\rmlin}(x_t)-y^\rmr\rVert_2),\\\nonumber
%\leq&\frac{c_\rml(1-\lambda)}{\underline{s}}\left(\sqrt{J_N^*(x_t)}+\sqrt{J_{\mathrm{eq},\rmlin}^*(x_t)}\right)\\\nonumber
%\leq&\frac{\sqrt{2}c_\rml(1-\lambda)}{\underline{s}}\sqrt{J_N^*(x_t)+J_{\mathrm{eq},\rmlin}^*(x_t)}\\\nonumber
%\leq&\frac{\sqrt{2}c_\rml(1-\lambda)}{\underline{s}}\sqrt{V(x_t)+2J_{\mathrm{eq}}^{\max}},
\end{align}
for some $\hat{c}_\rml>0$ due to the existence of a linear (and hence Lipschitz continuous) map $\hat{g}_{x_t}$ as in~\eqref{eq:ass_unique_steady_state_maps}.
Note that $\hat{x}^\rms(t+n)$ and $\tilde{x}^\rms$ both correspond to the same equilibrium input $\tilde{u}^\rms=\hat{u}^\rms(t+n)$, but to different dynamics linearized at $x_{t+n}$ and $x_t$, respectively.
Therefore, following the same steps as in~\eqref{eq:prop_proof_equil_diff_aux} and~\eqref{eq:prop_proof_equil_diff}, we can derive
\begin{align}\nonumber
&\lVert\tilde{x}^\rms-\hat{x}^\rms(t+n)\rVert_2\stackrel{\eqref{eq:prop_proof_equil_diff_aux}}{\leq}\frac{c_X}{\underline{\sigma}}\left(\lVert\tilde{x}^\rms-x_{t+n}\rVert_2^2+\lVert\tilde{x}^\rms-x_t\rVert_2^2\right)\\\label{eq:prop2_proof_candidate3}
&\stackrel{\eqref{eq:ab_ineq},\eqref{eq:prop2_proof_candidate2}}{\leq}2\frac{c_X}{\underline{\sigma}}\big(2\hat{c}_\rml^2(1-\lambda)^2\lVert y^{\rms*}(t)-y^{\rms\rmr}_{\rmlin}(x_t)\rVert_2^2\\\nonumber
&\quad+\lVert x^{\rms*}(t)-x_{t+n}\rVert_2^2+\lVert x^{\rms*}(t)-x_t\rVert_2^2\big)\\\nonumber
&\stackrel{\eqref{eq:prop_proof_equil_diff}}{\leq} 4\frac{c_X\hat{c}_\rml^2}{\underline{\sigma}}(1-\lambda)^2\lVert y^{\rms*}(t)-y^{\rms\rmr}_{\rmlin}(x_t)\rVert_2^2\\\nonumber
&\quad+2(c_1V(x_t)^2+c_2V(x_t)).
\end{align}
Combining~\eqref{eq:prop2_proof_candidate1}--\eqref{eq:prop2_proof_candidate3}, we see that, if $(1-\lambda)$ and $V_{\max}$ are sufficiently small, then $x_{t+n}$ is arbitrarily close to $\hat{x}^\rms(t+n)$.
Hence, by Assumption~\ref{ass:ctrb}, there exists an input-state trajectory $\hat{u}(t+n),\hat{x}(t+n)$ steering the system from $\hat{x}_0(t+n)=x_{t+n}$ to $\hat{x}_N(t+n)=\hat{x}^\rms(t+n)$ while satisfying $\hat{u}_k(t+n)\in\mathbb{U},k\in\mathbb{I}_{[0,N-1]}$ (note that $\hat{u}^\rms(t+n)\in\text{int}(\mathbb{U})$) and
\begin{align}\nonumber
\sum_{k=0}^{N-1}\lVert\hat{x}_k(t+n)-&\hat{x}^\rms(t+n)\rVert_2+\lVert\hat{u}_k(t+n)-\hat{u}^\rms(t+n)\rVert_2\\\label{eq:prop2_proof_ctrb}
&\leq\Gamma\lVert\hat{x}^\rms(t+n)-x_{t+n}\rVert_2.
\end{align}
\textbf{(ii) Bounds on candidate solution}\\
In the following, we derive multiple bounds on the candidate solution that will be useful in the remainder of the proof.\\
\textbf{(ii.a) Bound on $\lVert\hat{x}^\rms(t+n)-x_{t+n}\rVert_2$}\\
Note that
\begin{align*}
&\lVert\hat{x}^\rms(t+n)-x_{t+n}\rVert_2\\
\leq&\lVert\hat{x}^\rms(t+n)-x^{\rms*}(t)\rVert_2+\lVert x^{\rms*}(t)-x_{t+n}\rVert_2.
\end{align*}
Inequality~\eqref{eq:prop2_proof_candidate1} provides a bound on $\lVert x^{\rms*}(t)-x_{t+n}\rVert_2$.
In the following, we derive a more sophisticated bound which will be required to find a useful bound on $\lVert\hat{x}^\rms(t+n)-x_{t+n}\rVert_2$.
Define $x^\rms\text{$'$}(t+n-1)$ as the steady-state for the linearized dynamics at $x_{t+n-1}$ and with input $u^{\rms*}(t)$.
Then, we have
\begin{align}\label{eq:prop2_proof_dummy1}
&\lVert x^{\rms*}(t)-x_{t+n}\rVert_2\\\nonumber
\leq&\lVert x^{\rms*}(t)-x^\rms\text{$'$}(t+n-1)\rVert_2+\lVert x^\rms\text{$'$}(t+n-1)-x_{t+n}\rVert_2.
\end{align}
In the following, we bound the terms on the right-hand side of~\eqref{eq:prop2_proof_dummy1}.
First, we obtain
\begin{align}\label{eq:prop2_proof_dummy2}
&\quad\lVert x^\rms\text{$'$}(t+n-1)-x_{t+n}\rVert_2\\\nonumber
&=\lVert A_{x_{t+n-1}}(x^\rms\text{$'$}(t+n-1)-x_{t+n-1})\\\nonumber
&+B(u^{\rms*}(t)-u_{t+n-1})\rVert_2\\\nonumber
&\leq\lVert A_{x_{t+n-1}}\rVert_2\lVert x^\rms\text{$'$}(t+n-1)-x_{t+n-1}\rVert_2\\\nonumber
&\quad+\lVert B\rVert_2\lVert u^{\rms*}(t)-u_{t+n-1}\rVert_2\\\nonumber
&\leq\lVert A_{x_{t+n-1}}\rVert_2\Big(\lVert x^\rms\text{$'$}(t+n-1)-x^{\rms*}(t)\rVert_2\\\nonumber
&+\lVert x^{\rms*}(t)-x_{t+n-1}\rVert_2\Big)+\lVert B\rVert_2\lVert u^{\rms*}(t)-\bar{u}_{n-1}^*(t)\rVert_2.
\end{align}
Using the triangle inequality, it holds that
\begin{align}\label{eq:prop2_proof_dummy3}
&\lVert x^{\rms*}(t)-x_{t+n-1}\rVert_2\\\nonumber
\leq&\lVert x^{\rms*}(t)-\bar{x}_{n-1}^*(t)\rVert_2+\lVert \bar{x}_{n-1}^*(t)-x_{t+n-1}\rVert_2.
\end{align}
Defining $c_\rmA\coloneqq\lVert A_{x_{t+n-1}}\rVert_2$, $c_\rmB\coloneqq \lVert B\rVert_2$, $c_{\rmA\rmB}\coloneqq\max\{c_\rmA,c_\rmB\}$ and using~\eqref{eq:prop_proof_case2},~\eqref{eq:prop2_proof_dummy1},~\eqref{eq:prop2_proof_dummy2}, and~\eqref{eq:prop2_proof_dummy3} we have
\begin{align}\nonumber
\lVert x^{\rms*}(t)-x_{t+n}\rVert_2\leq&(1+c_\rmA)\lVert x^{\rms*}(t)-x^\rms\text{$'$}(t+n-1)\rVert_2\\\nonumber
&+c_{\rmA\rmB}\tilde{\gamma}\sqrt{\gamma_1}\lVert x^{\rms*}(t)-x^{\rms\rmr}_{\rmlin}(x_t)\rVert_2\\\label{eq:prop2_proof_dummy4}
&+c_\rmA\lVert\bar{x}_{n-1}^*(t)-x_{t+n-1}\rVert_2.
\end{align}
Next, following the same steps as in Part (ii.c) of the proof of Proposition~\ref{prop:rec_feas} (note that $x^\rms\text{$'$}(t+n-1)$ is defined as an equilibrium of the linearization at $x_{t+n-1}$ with the same input $u^{\rms*}(t)$ as $x^{\rms*}(t)$), it can be shown that
\begin{align}\label{eq:prop2_proof_dummy7}
\lVert x^{\rms*}(t)-x^\rms\text{$'$}(t+n-1)\rVert_2\leq c_1V(x_t)^2+c_2V(x_t).
\end{align}
Moreover, similar to~\eqref{eq:prop_proof_xn_xtn}, it is readily derived that
\begin{align}\label{eq:prop2_proof_dummy8}
\lVert\bar{x}_{n-1}^*(t)-x_{t+n-1}\rVert_2\leq2\frac{c_X}{\underline{q}}V(x_t)\sum_{k=0}^{n-3}L_f^k.
\end{align}
Finally, it follows from~\eqref{eq:prop2_proof_candidate2} and~\eqref{eq:prop2_proof_candidate3} that
\begin{align}\label{eq:prop2_proof_dummy5}
&\lVert\hat{x}^\rms(t+n)- x^{\rms*}(t)\rVert_2\\\nonumber
&\leq \hat{c}_\rml(1-\lambda)\lVert y^{\rms*}(t)-y^{\rms\rmr}_{\rmlin}(x_t)\rVert_2+2(c_1V(x_t)^2+c_2V(x_t))\\\nonumber
&\quad+4\frac{c_X\hat{c}_\rml^2}{\underline{\sigma}}(1-\lambda)^2\lVert y^{\rms*}(t)-y^{\rms\rmr}_{\rmlin}(x_t)\rVert_2^2.
\end{align}
Inserting the bounds~\eqref{eq:prop2_proof_dummy7} and~\eqref{eq:prop2_proof_dummy8} into~\eqref{eq:prop2_proof_dummy4}, and combining this with~\eqref{eq:prop2_proof_dummy5}, we obtain
\begin{align}\label{eq:prop2_proof_hatx_xtn}
&\lVert\hat{x}^\rms(t+n)-x_{t+n}\rVert_2\\\nonumber
\leq&\lVert\hat{x}^\rms(t+n)-x^{\rms*}(t)\rVert_2+\lVert x^{\rms*}(t)-x_{t+n}\rVert_2\\\nonumber
\leq&\hat{c}_\rml(1-\lambda)\lVert y^{\rms*}(t)-y^{\rms\rmr}_{\rmlin}(x_t)\rVert_2+2(c_1V(x_t)^2+c_2V(x_t))\\\nonumber
&+4\frac{c_X\hat{c}_\rml^2}{\underline{\sigma}}(1-\lambda)^2\lVert y^{\rms*}(t)-y^{\rms\rmr}_{\rmlin}(x_t)\rVert_2^2\\\nonumber
&+(1+c_\rmA)(c_1V(x_t)^2+c_2V(x_t))+2c_\rmA\frac{c_X}{\underline{q}}V(x_t)\sum_{k=0}^{n-3}L_f^k\\\nonumber
&+c_{\rmA\rmB}\tilde{\gamma}\sqrt{\gamma_1}\lVert x^{\rms*}(t)-x^{\rms\rmr}_{\rmlin}(x_t)\rVert_2\\\nonumber
\stackrel{\eqref{eq:prop2_proof_candidate2}}{\leq}& \hat{c}_\rml((1-\lambda)+c_{\rmA\rmB}\tilde{\gamma}\sqrt{\gamma_1})\lVert y^{\rms*}(t)-y^{\rms\rmr}_{\rmlin}(x_t)\rVert_2+c_4V(x_t)\\\nonumber
&+4\frac{c_X\hat{c}_\rml^2}{\underline{\sigma}}(1-\lambda)^2\lVert y^{\rms*}(t)-y^{\rms\rmr}_{\rmlin}(x_t)\rVert_2^2
\end{align}
for some $c_4>0$, using that $V(x_t)\leq V_{\max}$ in the last inequality.\\
\textbf{(ii.b) Bound on $\lVert\hat{y}^\rms(t+n)-y^\rmr\rVert_S^2-\lVert y^{\rms*}(t)-y^\rmr\rVert_S^2$}\\
Similar to~\cite{koehler2020nonlinear,berberich2020tracking}, it is straightforward to exploit the convexity condition~\eqref{eq:ass_strongly_convex} in order to derive
\begin{align}\label{eq:prop2_proof_iib1}
&\lVert\tilde{y}^\rms-y^\rmr\rVert_S^2-\lVert y^{\rms*}(t)-y^\rmr\rVert_S^2\\\nonumber
\leq&-(1-\lambda^2)\lVert y^{\rms*}(t)-y^{\rms\rmr}_{\rmlin}(x_t)\rVert_S^2,
\end{align}
where $\tilde{y}^\rms=\lambda y^{\rms*}(t)+(1-\lambda)y^{\rms\rmr}_{\rmlin}(x_t)$.
Moreover,~\eqref{eq:prop_proof_std_norm_ineq} implies
\begin{align}\label{eq:prop2_proof_iib2}
&\lVert\hat{y}^\rms(t+n)-y^\rmr\rVert_S^2-\lVert \tilde{y}^\rms-y^\rmr\rVert_S^2\\\nonumber
\leq&\lVert\hat{y}^\rms(t+n)-\tilde{y}^\rms\rVert_S^2+2\lVert\hat{y}^\rms(t+n)-\tilde{y}^\rms\rVert_S\lVert\tilde{y}^\rms-y^\rmr\rVert_S.
\end{align}
Recall that $\tilde{u}^\rms=\lambda u^{\rms*}(t)+(1-\lambda)u^{\rms\rmr}_{\rmlin}(x_t)=\hat{u}^\rms(t+n)$.
Hence, using an inequality of the form~\eqref{eq:diff_lin} for the vector field $h$, there exist constants $c_{Xh},L_h\geq0$ such that
\begin{align}\label{eq:prop2_proof_iib7}
&\lVert\hat{y}^\rms(t+n)-\tilde{y}^\rms\rVert_2\\\nonumber
=&\lVert h_{x_{t+n}}(\hat{x}^\rms(t+n),\tilde{u}^\rms)-h_{x_t}(\tilde{x}^\rms,\tilde{u}^\rms)\rVert_2\\\nonumber
\leq&c_{Xh}\lVert\hat{x}^\rms(t+n)-x_{t+n}\rVert_2^2+c_{Xh}\lVert\tilde{x}^\rms-x_t\rVert_2^2\\\nonumber
&+L_h\lVert\hat{x}^\rms(t+n)-\tilde{x}^\rms\rVert_2.
\end{align}
The second term on the right-hand side is bounded as
\begin{align*}
\lVert\tilde{x}^\rms-x_t\rVert_2^2\stackrel{\eqref{eq:ab_ineq}}{\leq}&2\lVert\tilde{x}^\rms-x^{\rms*}(t)\rVert_2^2+2\lVert x^{\rms*}(t)-x_t\rVert_2^2\\
\stackrel{\eqref{eq:prop_proof_Lyap_sls},\eqref{eq:prop2_proof_candidate2}}{\leq}&2\hat{c}_\rml^2(1-\lambda)^2\lVert y^{\rms*}(t)-y^{\rms\rmr}_{\rmlin}(x_t)\rVert_2^2+\frac{2}{\underline{q}}V(x_t).
\end{align*}
Using in addition the bounds \eqref{eq:prop2_proof_candidate3} and~\eqref{eq:prop2_proof_hatx_xtn}, this implies
\begin{align}\label{eq:prop2_proof_iib3}
&\lVert\hat{y}^\rms(t+n)-\tilde{y}^\rms\rVert_2\\\nonumber
&\leq c_{Xh}\Big(\hat{c}_\rml((1-\lambda)+c_{\rmA\rmB}\tilde{\gamma}\sqrt{\gamma_1})\lVert y^{\rms*}(t)-y^{\rms\rmr}_{\rmlin}(x_t)\rVert_2\\\nonumber
&\quad+c_4V(x_t)+4\frac{c_X\hat{c}_\rml^2}{\underline{\sigma}}(1-\lambda)^2\lVert y^{\rms*}(t)-y^{\rms\rmr}_{\rmlin}(x_t)\rVert_2^2\Big)^2\\\nonumber
&\quad+c_{Xh}\left(2\hat{c}_\rml^2(1-\lambda)^2\lVert y^{\rms*}(t)-y^{\rms\rmr}_{\rmlin}(x_t)\rVert_2^2+\frac{2}{\underline{q}}V(x_t)\right)\\\nonumber
&\quad+4L_h\frac{c_X\hat{c}_\rml^2}{\underline{\sigma}}(1-\lambda)^2\lVert y^{\rms*}(t)-y^{\rms\rmr}_{\rmlin}(x_t)\rVert_2^2\\\nonumber
&\quad+2L_h(c_1V(x_t)^2+c_2V(x_t)).
\end{align}
Further, by convexity (recall that $\tilde{y}^\rms$ is defined as a convex combination of $y^{\rms*}(t)$ and $y^{\rms\rmr}_{\rmlin}(x_t)$) we have
\begin{align}\label{eq:prop2_proof_iib4}
&\lVert\tilde{y}^\rms-y^\rmr\rVert_S^2\\\nonumber
\leq&\lambda\lVert y^{\rms*}(t)-y^\rmr\rVert_S^2+(1-\lambda)\lVert y^{\rms\rmr}_{\rmlin}(x_t)-y^\rmr\rVert_S^2\\\nonumber
\leq&\lambda(V(x_t)+J_{\mathrm{eq}}^{\max})+(1-\lambda)J_{\mathrm{eq}}^{\max}.
\end{align}
The bound~\eqref{eq:prop2_proof_iib2} together with the subsequently derived bounds will play an important role in the remainder of the proof.
To this end, using~\eqref{eq:prop2_proof_iib1}, we conclude
\begin{align}\label{eq:prop2_proof_final_dummy1}
&\lVert\hat{y}^\rms(t+n)-y^\rmr\rVert_S^2-\lVert y^{\rms*}(t)-y^\rmr\rVert_S^2\\\nonumber
=&\lVert\hat{y}^\rms(t+n)-y^\rmr\rVert_S^2-\lVert\tilde{y}^\rms-y^\rmr\rVert_S^2\\\nonumber
&+\lVert\tilde{y}^\rms-y^\rmr\rVert_S^2-\lVert y^{\rms*}(t)-y^\rmr\rVert_S^2\\\nonumber
\stackrel{\eqref{eq:prop2_proof_iib1}}{\leq}&-(1-\lambda^2)\lVert y^{\rms*}(t)-y^{\rms\rmr}_{\rmlin}(x_t)\rVert_S^2\\\nonumber
&+\lVert\hat{y}^\rms(t+n)-y^\rmr\rVert_S^2-\lVert\tilde{y}^\rms-y^\rmr\rVert_S^2.
\end{align}
\textbf{(iii) Invariance of $V(x_t)\leq V_{\max}$}\\
It follows directly from~\eqref{eq:prop2_proof_ctrb},~\eqref{eq:prop2_proof_hatx_xtn}, and~\eqref{eq:prop2_proof_final_dummy1}, and using the inequality $a^2+b^2\leq(a+b)^2$ for $a,b\geq0$, that
\begin{align*}
&J_N^*(x_{t+n})-J_N^*(x_t)\\
&\leq\lambda_{\max}(Q,R)\Gamma^2\Big(4\frac{c_X\hat{c}_\rml^2}{\underline{\sigma}}(1-\lambda)^2\lVert y^{\rms*}(t)-y^{\rms\rmr}_{\rmlin}(x_t)\rVert_2^2\\
&+\hat{c}_\rml((1-\lambda)+c_{\rmA\rmB}\tilde{\gamma}\sqrt{\gamma_1})\lVert y^{\rms*}(t)-y^{\rms\rmr}_{\rmlin}(x_t)\rVert_2+c_4V(x_t)\Big)^2\\
&-\lVert x_t-x^{\rms*}(t)\rVert_Q^2-(1-\lambda^2)\lVert y^{\rms*}(t)-y^{\rms\rmr}_{\rmlin}(x_t)\rVert_S^2\\
&+\lVert\hat{y}^\rms(t+n)-y^\rmr\rVert_S^2-\lVert\tilde{y}^\rms-y^\rmr\rVert_S^2.
\end{align*}
If $V_{\max}$, $J_{\mathrm{eq}}^{\max}$, $\gamma_1$ and $(1-\lambda)$ are all sufficiently small, then using the bounds derived in Part (ii.b) of the proof, it follows directly that
\begin{align*}
&J_N^*(x_{t+n})-J_N^*(x_t)\\
\leq&-\lVert x_t-x^{\rms*}(t)\rVert_Q^2-c_5\lVert y^{\rms*}(t)-y^{\rms\rmr}_{\rmlin}(x_t)\rVert_S^2+c_6V(x_t)
\end{align*}
with some $c_5,c_6>0$, where $c_6$ becomes arbitrarily small if $V_{\max}$ and $J_{\mathrm{eq}}^{\max}$ are sufficiently small.
It follows from the existence of a linear map $\hat{g}_x$ as in~\eqref{eq:ass_unique_steady_state_maps} that $\lVert y^{\rms*}(t)-y^{\rms\rmr}_{\rmlin}(x_t)\rVert_S^2\geq \frac{1}{\hat{c}_\rml}\lVert x^{\rms*}(t)-x^{\rms\rmr}_{\rmlin}(x_t)\rVert_S^2$ with $\hat{c}_\rml>0$.
Hence, using the upper bound~\eqref{eq:lem_value_fcn_upper_bound}, we obtain
\begin{align*}
&J_N^*(x_{t+n})-J_N^*(x_t)\\
\leq&-\frac{\min\{\underline{q},\frac{c_5}{\hat{c}_\rml}\underline{s}\}}{2}\lVert x_t-x^{\rms\rmr}_{\rmlin}(x_t)\rVert_2^2+c_6V(x_t)\\
\stackrel{\eqref{eq:lem_value_fcn_upper_bound}}{\leq}&(\tilde{c}_{\rmV2}-1)V(x_t)
\end{align*}
for some $0<\tilde{c}_{\rmV2}<1$, assuming that $V_{\max}$ and $J_{\mathrm{eq}}^{\max}$ and hence $c_6$ are sufficiently small.
This leads to
\begin{align*}
V(x_{t+n})\leq \tilde{c}_{\rmV2}V(x_t)+J_{\mathrm{eq},\rmlin}^*(x_t)-J_{\mathrm{eq},\rmlin}^*(x_{t+n}).
\end{align*}
Finally, following the same steps as in the proof of Proposition~\ref{prop:rec_feas}, we can show that this implies the existence of a constant $0<c_{\rmV2}<1$ such that $V(x_{t+n})\leq c_{\rmV2} V(x_t)$.
Combining this with the statement of Proposition~\ref{prop:rec_feas}, we obtain $V(x_{t+n})\leq c_\rmV V(x_t)$ for $c_\rmV\coloneqq\max\{c_{\rmV1},c_{\rmV2}\}<1$.
\end{proof}

\section{Sufficient conditions for Assumption~\ref{ass:convex_steady_state_manifold}}\label{app:cond_ass}

In the following, we present sufficient conditions for Assumption~\ref{ass:convex_steady_state_manifold}.
Throughout this section, we assume that $m=p$, i.e., the numbers of inputs and outputs coincide.
Further, we assume that the target setpoint $y^\rmr$ is reachable, as captured in the following assumption.
\begin{assumption}\label{ass:reachability}
(Reachability)
For any $\tilde{x}\in\mathbb{R}^n$, the target setpoint $y^\rmr$ is reachable under the linearized dynamics, i.e., $y^{\rms\rmr}_{\rmlin}(\tilde{x})=y^\rmr$.
\end{assumption}
Assumption~\ref{ass:reachability} means that the optimal reachable output equilibrium $y^{\rms\rmr}_{\rmlin}(\tilde{x})$ at any linearization point $\tilde{x}\in\mathbb{R}^n$ is equal to the target setpoint $y^\rmr$ and hence, inserting $\tilde{x}=x^{\rms\rmr}$, the same holds true for the nonlinear optimal reachable output equilibrium, i.e, $y^{\rms\rmr}=y^\rmr$.
Assumption~\ref{ass:reachability} is only restrictive if the equilibrium input leading to the target setpoint $y^\rmr$ does not satisfy the input constraints.
In particular, Assumption~\ref{ass:reachability} always holds in case of no input constraints, i.e., if $\mathbb{U}=\mathbb{R}^m$, due to Assumption~\ref{ass:unique_steady_state} and $m=p$.
Before verifying Assumption~\ref{ass:convex_steady_state_manifold}, we first prove a technical intermediate result.
\begin{lemma}\label{lem:app_1}
Suppose Assumptions~\ref{ass:C2},~\ref{ass:unique_steady_state},~\ref{ass:exist_equil_state}, and~\ref{ass:steady_state_compact} hold and $m=p$.
Then, for any compact set $X$ with $\mathcal{B}\subseteq X\times\mathbb{U}$ (cf. Assumption~\ref{ass:steady_state_compact}), there exists a constant $c_{\rms1}>0$ such that for any steady-state $(x^\rms,u^\rms)\in\calZ^\rms$ of the nonlinear system, any state $\tilde{x}\in X$, and any input $\tilde{u}\in\mathbb{U}$, it holds that
\begin{align}\label{eq:lem_app_1}
&\lVert (x^\rms,u^\rms)-(\tilde{x},\tilde{u})\rVert_2\\\nonumber
\leq &c_{\rms1}\big(\lVert h(x^\rms,u^\rms)-h(\tilde{x},\tilde{u})\rVert_2+\lVert \tilde{x}-f(\tilde{x},\tilde{u})\rVert_2\big).
\end{align}
\end{lemma}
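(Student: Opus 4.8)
The plan is to read inequality~\eqref{eq:lem_app_1} as the statement that the map sending a pair $(x,u)$ to its equilibrium residual together with its output has a \emph{globally Lipschitz inverse}, and to obtain this from a global inverse function theorem. Concretely, I would introduce
\begin{align*}
F:\mathbb{R}^{n}\times\mathbb{R}^{m}\to\mathbb{R}^{n}\times\mathbb{R}^{p},\qquad
F(x,u)\coloneqq\big(x-f(x,u),\ h(x,u)\big),
\end{align*}
which is continuously differentiable by Assumption~\ref{ass:C2}, with Jacobian
\begin{align*}
DF(x,u)=\begin{bmatrix}I-A_x&-B\\ C_x&D\end{bmatrix}
=\begin{bmatrix}-I&0\\ 0&I\end{bmatrix}\begin{bmatrix}A_x-I&B\\ C_x&D\end{bmatrix}.
\end{align*}
Since $m=p$ this matrix is square, and since the left factor is orthogonal its minimum singular value equals $\sigma_{\min}\big(\begin{bmatrix}A_x-I&B\\ C_x&D\end{bmatrix}\big)\geq\sigma_\rms$ by Assumption~\ref{ass:unique_steady_state}; hence $DF(x,u)$ is invertible at every $(x,u)$ with $\lVert DF(x,u)^{-1}\rVert_2\leq 1/\sigma_\rms$.

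Next I would invoke the global inverse function theorem already used in Section~\ref{sec:setting} (\cite[Condition~(1.1)]{radulescu1980global}): the uniform bound $\sigma_{\min}(DF)\geq\sigma_\rms>0$ makes its hypothesis trivially satisfied, so $F$ is a $C^{1}$-diffeomorphism of $\mathbb{R}^{n+m}$ onto itself. Then $F^{-1}$ is $C^{1}$ with $D(F^{-1})(z)=\big(DF(F^{-1}(z))\big)^{-1}$, so $\lVert D(F^{-1})(z)\rVert_2\leq 1/\sigma_\rms$ everywhere, and integrating $D(F^{-1})$ along segments gives (using that $F$ is onto) $\lVert z_1-z_2\rVert_2\leq\tfrac{1}{\sigma_\rms}\lVert F(z_1)-F(z_2)\rVert_2$ for all $z_1,z_2$. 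I would finish by evaluating this at $z_1=(x^\rms,u^\rms)$ and $z_2=(\tilde x,\tilde u)$: since $(x^\rms,u^\rms)\in\mathcal{Z}^\rms$ satisfies $x^\rms=f(x^\rms,u^\rms)$, we have $F(x^\rms,u^\rms)=(0,h(x^\rms,u^\rms))$, whereas $F(\tilde x,\tilde u)=(\tilde x-f(\tilde x,\tilde u),h(\tilde x,\tilde u))$; then $\lVert(a,b)\rVert_2\leq\lVert a\rVert_2+\lVert b\rVert_2$ yields exactly~\eqref{eq:lem_app_1} with $c_{\rms1}\coloneqq 1/\sigma_\rms$. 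Incidentally this constant is independent of $X$, and Assumptions~\ref{ass:exist_equil_state} and~\ref{ass:steady_state_compact} are not actually needed for this bound.

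The only genuinely delicate step is the global Lipschitz inverse. A one-sided expansivity estimate $\lVert F(z_1)-F(z_2)\rVert_2\geq\sigma_\rms\lVert z_1-z_2\rVert_2$ does \emph{not} follow from merely integrating $DF$ along the segment from $z_1$ to $z_2$, because the averaged Jacobian $\int_0^1 DF\big((1-s)z_1+sz_2\big)\,ds$ can be singular even when $DF$ is uniformly well conditioned; global injectivity of $F$ is really required, and that is exactly what the Hadamard--L\'evy-type theorem provides here (for free, thanks to the uniform singular-value bound, with $m=p$ essential so that $DF$ is square). If one wished to avoid that theorem, a compactness/contradiction argument would also work: a sequence of points violating~\eqref{eq:lem_app_1} with constants tending to infinity lies in the compact sets $\mathcal{Z}^\rms\subseteq\mathcal{B}$ (Assumption~\ref{ass:steady_state_compact}), $X$ and $\mathbb{U}$, which forces the residual and the output mismatch to vanish in the limit; the two limit points are then equilibria of the nonlinear system with a common output equilibrium, hence equal by the uniqueness recorded after Assumption~\ref{ass:unique_steady_state}, and local invertibility of $DF$ near that common point supplies a local Lipschitz inverse on a ball containing the tail of the sequence, a contradiction.
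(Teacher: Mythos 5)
Your proposal is correct and follows essentially the same route as the paper: the paper's proof defines the identical map $s(x,u)=(x-f(x,u),\,h(x,u))$, invokes the same global inverse function theorem of~\cite{radulescu1980global} via Assumption~\ref{ass:unique_steady_state}, and concludes Lipschitz continuity of the inverse before specializing to $(x^\rms,u^\rms)$ and $(\tilde x,\tilde u)$. Your version is marginally sharper in that it extracts the explicit constant $1/\sigma_\rms$ from the uniform singular-value bound rather than appealing to compactness of $h(X,\mathbb{U})$, and your observation that the one-sided expansivity estimate genuinely requires global injectivity (not merely integrating $DF$ along a segment) is a correct clarification of a step the paper leaves implicit.
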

\begin{proof}
Define the map $s:X\times\mathbb{U}\to X\times h(X,\mathbb{U})$ with
\begin{align}
s(x,u)=\begin{bmatrix}x-f(x,u)\\h(x,u)\end{bmatrix}.
\end{align}
First, recall that, by Assumption~\ref{ass:unique_steady_state}, $s(x,u)$ is invertible with smooth and hence (on the compact set $h(X,\mathbb{U})$) Lipschitz continuous inverse, cf.~\cite[Condition (1.1)]{radulescu1980global}.
Thus, there exists $c_{\rms1}>0$ such that, for any $x_1,x_2\in X$, $u_1,u_2\in\mathbb{U}$, it holds that
\begin{align}\label{eq:lem_app_1_proof1}
\lVert (x_1,u_1)-(x_2,u_2)\rVert_2&\leq c_{\rms1}\lVert s(x_1,u_1)-s(x_2,u_2)\rVert_2.
\end{align}
Choosing $(x^\rms,u^\rms)\in\calZ^\rms\subseteq\mathcal{B}\times\mathbb{U}$, $\tilde{x}\in X$, $\tilde{u}\in\mathbb{U}$,~\eqref{eq:lem_app_1_proof1} implies
\begin{align}
\lVert (x^\rms,u^\rms)-(\tilde{x},\tilde{u})\rVert_2
&\leq c_{\rms1}\lVert s(x^\rms,u^\rms)-s(\tilde{x},\tilde{u})\rVert_2\\\nonumber
&\leq c_{\rms1}\big(\lVert x^\rms-f(x^\rms,u^\rms)-\tilde{x}+f(\tilde{x},\tilde{u})\rVert_2\\\nonumber
&\quad+\lVert h(x^\rms,u^\rms)-h(\tilde{x},\tilde{u})\rVert_2\big).
\end{align}
Using $x^\rms=f(x^\rms,u^\rms)$, we infer~\eqref{eq:lem_app_1}.
\end{proof}

Let us now prove~\eqref{eq:convexity_nonlinear} based on Lemma~\ref{lem:app_1} and the given assumptions.

\begin{proposition}\label{prop:convexity_nonlinear}
If Assumptions~\ref{ass:C2},~\ref{ass:unique_steady_state},~\ref{ass:exist_equil_state},~\ref{ass:steady_state_compact}, and~\ref{ass:reachability} hold and $m=p$, then Assumption~\ref{ass:convex_steady_state_manifold} holds.
%then for any compact set $X$ with $\mathcal{B}\subseteq X$, there exist constants $c_{\mathrm{eq},1}$, $c_{\mathrm{eq},2}>0$ such that, for any $\hat{x}\in X$, it holds that
%
%\begin{align}\label{eq:prop_convexity_nonlinear}
%c_{\mathrm{eq},1}\lVert\hat{x}-x^{\rms\rmr}_{\rmlin}(\hat{x})\rVert_2^2\leq\lVert\hat{x}-x^{\rms\rmr}\rVert_2^2\leq c_{\mathrm{eq},2}\lVert\hat{x}-x^{\rms\rmr}_{\rmlin}(\hat{x})\rVert_2^2.
%\end{align}
\end{proposition}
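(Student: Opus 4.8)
The plan is to reduce \eqref{eq:convexity_nonlinear} to two elementary Lipschitz-type estimates for the map $\hat{x}\mapsto x^{\rms\rmr}_{\rmlin}(\hat{x})$. The key simplification comes from Assumption~\ref{ass:reachability}: it yields $y^{\rms\rmr}_{\rmlin}(\tilde{x})=y^\rmr$ for all $\tilde{x}\in\bbr^n$ and, with $\tilde{x}=x^{\rms\rmr}$, also $y^{\rms\rmr}=y^\rmr$; hence $(x^{\rms\rmr}_{\rmlin}(\hat{x}),u^{\rms\rmr}_{\rmlin}(\hat{x}))=\hat{g}_{\hat{x}}(y^\rmr)$ for every $\hat{x}$, and by \eqref{eq:f_equals_f_lin} the pair $(x^{\rms\rmr},u^{\rms\rmr})$ from \eqref{eq:x_sr_u_sr_def} is at the same time the nonlinear equilibrium of output $y^\rmr$ and the equilibrium of the linearization at $x^{\rms\rmr}$, so that $x^{\rms\rmr}_{\rmlin}(x^{\rms\rmr})=x^{\rms\rmr}$. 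Moreover, Assumption~\ref{ass:steady_state_compact} gives $(x^{\rms\rmr}_{\rmlin}(\hat{x}),u^{\rms\rmr}_{\rmlin}(\hat{x}))\in\mathcal{B}\subseteq X\times\mathbb{U}$, so $x^{\rms\rmr}_{\rmlin}(\hat{x})\in X$ for all $\hat{x}$ and $x^{\rms\rmr}\in X$. Since the assertion on a compact set follows from the assertion on any larger compact set, it suffices to treat the case where $X$ is a closed ball, so that all smooth maps appearing below are globally Lipschitz on $X$ with uniform constants.

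For the \emph{upper} bound, I would apply Lemma~\ref{lem:app_1} with the nonlinear equilibrium $(x^\rms,u^\rms)=(x^{\rms\rmr},u^{\rms\rmr})$ and with $(\tilde{x},\tilde{u})=(x^{\rms\rmr}_{\rmlin}(\hat{x}),u^{\rms\rmr}_{\rmlin}(\hat{x}))\in X\times\mathbb{U}$. On the right-hand side of \eqref{eq:lem_app_1}, $h(x^{\rms\rmr},u^{\rms\rmr})=y^\rmr=h_{\hat{x}}(x^{\rms\rmr}_{\rmlin}(\hat{x}),u^{\rms\rmr}_{\rmlin}(\hat{x}))$ and $x^{\rms\rmr}_{\rmlin}(\hat{x})=f_{\hat{x}}(x^{\rms\rmr}_{\rmlin}(\hat{x}),u^{\rms\rmr}_{\rmlin}(\hat{x}))$, so both summands are linearization errors and are bounded by Proposition~\ref{prop:Lin} (inequalities \eqref{eq:ass_Lin} and \eqref{eq:ass_Lin_output}) by $(c_X+c_{Xh})\lVert x^{\rms\rmr}_{\rmlin}(\hat{x})-\hat{x}\rVert_2^2$. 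Since $x^{\rms\rmr}_{\rmlin}(\hat{x}),\hat{x}\in X$, one factor $\lVert x^{\rms\rmr}_{\rmlin}(\hat{x})-\hat{x}\rVert_2$ is bounded by $\mathrm{diam}(X)$, yielding $\lVert x^{\rms\rmr}-x^{\rms\rmr}_{\rmlin}(\hat{x})\rVert_2\leq K_1\lVert x^{\rms\rmr}_{\rmlin}(\hat{x})-\hat{x}\rVert_2$ for a uniform $K_1>0$. Combined with the triangle inequality $\lVert\hat{x}-x^{\rms\rmr}\rVert_2\leq\lVert\hat{x}-x^{\rms\rmr}_{\rmlin}(\hat{x})\rVert_2+\lVert x^{\rms\rmr}_{\rmlin}(\hat{x})-x^{\rms\rmr}\rVert_2$, this gives the upper bound in \eqref{eq:convexity_nonlinear} with $c_{\mathrm{eq},2}=(1+K_1)^2$.

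For the \emph{lower} bound, I would use $m=p$. By Assumption~\ref{ass:reachability}, $(x^{\rms\rmr}_{\rmlin}(\tilde{x}),u^{\rms\rmr}_{\rmlin}(\tilde{x}))^\top=M_{\tilde{x}}^{-1}(-e_{\tilde{x}},\,y^\rmr-r_{\tilde{x}})^\top$ with the \emph{square} matrix $M_{\tilde{x}}=\begin{bmatrix}A_{\tilde{x}}-I&B\\C_{\tilde{x}}&D\end{bmatrix}$, which is invertible with $\lVert M_{\tilde{x}}^{-1}\rVert_2\leq1/\sigma_\rms$ by Assumption~\ref{ass:unique_steady_state}. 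Since $f_0,h_0\in C^2$, the data $A_{\tilde{x}},C_{\tilde{x}},e_{\tilde{x}},r_{\tilde{x}}$ are $C^1$ in $\tilde{x}$; using the uniform bound on $M_{\tilde{x}}^{-1}$, the identity $M_1^{-1}-M_2^{-1}=-M_1^{-1}(M_1-M_2)M_2^{-1}$, and boundedness of $\mathcal{B}$, the map $\tilde{x}\mapsto(x^{\rms\rmr}_{\rmlin}(\tilde{x}),u^{\rms\rmr}_{\rmlin}(\tilde{x}))$ is Lipschitz on $X$, so that (using $x^{\rms\rmr}_{\rmlin}(x^{\rms\rmr})=x^{\rms\rmr}$) $\lVert x^{\rms\rmr}_{\rmlin}(\hat{x})-x^{\rms\rmr}\rVert_2\leq K_2\lVert\hat{x}-x^{\rms\rmr}\rVert_2$ for a uniform $K_2>0$. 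The triangle inequality $\lVert\hat{x}-x^{\rms\rmr}_{\rmlin}(\hat{x})\rVert_2\leq\lVert\hat{x}-x^{\rms\rmr}\rVert_2+\lVert x^{\rms\rmr}-x^{\rms\rmr}_{\rmlin}(\hat{x})\rVert_2\leq(1+K_2)\lVert\hat{x}-x^{\rms\rmr}\rVert_2$ then yields the lower bound with $c_{\mathrm{eq},1}=(1+K_2)^{-2}$.

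I expect the conceptual content to be light; the main obstacle is bookkeeping—checking that all constants ($c_X,c_{Xh}$ from Proposition~\ref{prop:Lin}, $c_{\rms1}$ from Lemma~\ref{lem:app_1}, the Lipschitz constants of $A_\cdot,C_\cdot,e_\cdot,r_\cdot$, and $\lVert M_\cdot^{-1}\rVert_2$) can be chosen uniformly over one fixed compact $X$, which is exactly where Assumptions~\ref{ass:C2}, \ref{ass:unique_steady_state} and \ref{ass:steady_state_compact} (and, through Lemma~\ref{lem:app_1}, Assumption~\ref{ass:exist_equil_state}) enter. The other point to watch is that $m=p$ is essential in the lower-bound step so that $M_{\tilde{x}}$ is square; without it $x^{\rms\rmr}_{\rmlin}(\tilde{x})$ is only the argmin of a strongly convex quadratic program with smoothly varying data, which remains Lipschitz but would require a separate sensitivity argument.
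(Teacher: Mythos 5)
Your proposal is correct and follows essentially the same route as the paper's proof: the bound involving $c_{\mathrm{eq},2}$ via Lemma~\ref{lem:app_1} applied to $(x^{\rms\rmr},u^{\rms\rmr})$ and $(x^{\rms\rmr}_{\rmlin}(\hat{x}),u^{\rms\rmr}_{\rmlin}(\hat{x}))$ with the two residuals recognized as linearization errors, and the bound involving $c_{\mathrm{eq},1}$ via the explicit representation $\hat{g}_{\hat{x}}(y^\rmr)=M_{\hat{x}}^{-1}(-e_{\hat{x}},\,y^\rmr-r_{\hat{x}})$, the resolvent identity, and the uniform bound $\lVert M_{\hat{x}}^{-1}\rVert_2\leq1/\sigma_\rms$. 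The only cosmetic difference is that you invoke Lipschitz continuity of $\tilde{x}\mapsto(A_{\tilde{x}},C_{\tilde{x}},e_{\tilde{x}},r_{\tilde{x}})$ directly, whereas the paper first states quadratic bounds in the spirit of Proposition~\ref{prop:Lin} and then absorbs one factor into the diameter of the compact set --- both yield the same Lipschitz estimate and the same constants up to relabeling.
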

\begin{proof}
\textbf{Proof of $c_{\mathrm{eq},1}\lVert\hat{x}-x^{\rms\rmr}_{\rmlin}(\hat{x})\rVert_2^2\leq\lVert\hat{x}-x^{\rms\rmr}\rVert_2^2$}\\
The linear map in~\eqref{eq:ass_unique_steady_state_maps} can be written explicitly as 
\begin{align}\label{eq:prop_convexity_nonlinear_proof0}
\hat{g}_{\hat{x}}(y^\rms)=\underbrace{\begin{bmatrix}A_{\hat{x}}-I&B\\C_{\hat{x}}&D\end{bmatrix}^{-1}}_{M_{\hat{x}}^{-1}\coloneqq}\begin{bmatrix}-e_{\hat{x}}\\y^\rms-r_{\hat{x}}\end{bmatrix}
\end{align}
for any $y^\rms\in\mathcal{Z}_{\rmy,\rmlin}^\rms(\hat{x})$.
In the following, we derive a bound on the difference $M_{\hat{x}}^{-1}-M_{x^{\rms\rmr}}^{-1}$ which we then use to obtain the desired statement.
To this end, it is readily derived that
\begin{align*}
\lVert M_{\hat{x}}^{-1}-M_{x^{\rms\rmr}}^{-1}\rVert_2
&\leq\lVert M_{\hat{x}}^{-1}\rVert_2\lVert I-M_{\hat{x}}M_{x^{\rms\rmr}}^{-1}\rVert_2,\\
\lVert I-M_{\hat{x}}M_{x^{\rms\rmr}}^{-1}\rVert_2&\leq \lVert M_{x^{\rms\rmr}}-M_{\hat{x}}\rVert_2
\lVert M_{x^{\rms\rmr}}^{-1}\rVert_2.
\end{align*}
Combining these inequalities and using that $\lVert M_{\hat{x}}^{-1}\rVert_2\leq\frac{1}{\sigma_\rms}$ and $\lVert M_{x^{\rms\rmr}}^{-1}\rVert_2\leq\frac{1}{\sigma_\rms}$ due to Assumption~\ref{ass:unique_steady_state}, we obtain
\begin{align}\label{eq:prop_convexity_nonlinear_proof1}
\lVert M_{\hat{x}}^{-1}-M_{x^{\rms\rmr}}^{-1}\rVert_2\leq\frac{1}{{\sigma_\rms}^2}\lVert M_{x^{\rms\rmr}}-M_{\hat{x}}\rVert_2.
\end{align}
Similar to~\eqref{eq:diff_lin}, it holds that
\begin{align}\label{eq:prop_convexity_nonlinear_proof2}
\lVert M_{x^{\rms\rmr}}-M_{\hat{x}}\rVert_2\leq \tilde{c}_X\lVert x^{\rms\rmr}-\hat{x}\rVert_2^2
\end{align}
for some $\tilde{c}_X>0$.
Here, we use that $\hat{x}\in X$ by assumption and $x^{\rms\rmr}\in\mathcal{B}_x\subseteq X$ due to Assumption~\ref{ass:steady_state_compact}, where $\mathcal{B}_x$ denotes the projection of $\mathcal{B}$ on the state component.
To summarize, combining~\eqref{eq:prop_convexity_nonlinear_proof1} and~\eqref{eq:prop_convexity_nonlinear_proof2} and using that $y^{\rms\rmr}=y^{\rms\rmr}_{\rmlin}(\hat{x})=y^\rmr$ by Assumption~\ref{ass:reachability}, we have
\begin{align}\label{eq:prop_convexity_nonlinear_proof3}
&\lVert x^{\rms\rmr}_{\rmlin}(\hat{x})-x^{\rms\rmr}\rVert_2\\\nonumber
\stackrel{\eqref{eq:prop_convexity_nonlinear_proof0}}{\leq}&\left\lVert M_{\hat{x}}^{-1}\begin{bmatrix}-e_{\hat{x}}\\y^{\rms\rmr}-r_{\hat{x}}\end{bmatrix}
-M_{x^{\rms\rmr}}^{-1}\begin{bmatrix}-e_{x^{\rms\rmr}}\\y^{\rms\rmr}-r_{x^{\rms\rmr}}\end{bmatrix}\right\rVert_2\\\nonumber
\leq&\left\lVert (M_{\hat{x}}^{-1}-M_{x^{\rms\rmr}}^{-1})\begin{bmatrix}-e_{x^{\rms\rmr}}\\y^{\rms\rmr}-r_{x^{\rms\rmr}}
\end{bmatrix}\right\rVert_2
+\left\lVert M_{\hat{x}}^{-1}\begin{bmatrix}e_{x^{\rms\rmr}}-e_{\hat{x}}\\r_{x^{\rms\rmr}}-r_{\hat{x}}
\end{bmatrix}\right\rVert_2\\\nonumber
\leq&\frac{\tilde{c}_X}{\sigma_\rms^2}\left\lVert\begin{bmatrix}-e_{x^{\rms\rmr}}\\y^{\rms\rmr}-r_{x^{\rms\rmr}}
\end{bmatrix}\right\rVert_2\lVert x^{\rms\rmr}-\hat{x}\rVert_2^2\\\nonumber
&+\lVert M_{\hat{x}}^{-1}\rVert_2\left\lVert\begin{bmatrix}e_{\hat{x}}-e_{x^{\rms\rmr}}\\r_{\hat{x}}-r_{x^{\rms\rmr}}\end{bmatrix}\right\rVert_2.
\end{align}
Further, using a similar argument as in Proposition~\ref{prop:Lin} for the vector fields $f_0$, $h_0$, there exists a constant $c_{X0}>0$ such that
\begin{align*}
\lVert M_{\hat{x}}^{-1}\rVert_2\left\lVert\begin{bmatrix}e_{\hat{x}}-e_{x^{\rms\rmr}}\\r_{\hat{x}}-r_{x^{\rms\rmr}}\end{bmatrix}\right\rVert_2
\leq&\frac{1}{\sigma_\rms}c_{X0}\lVert \hat{x}-x^{\rms\rmr}\rVert_2^2.
\end{align*}
Together with~\eqref{eq:prop_convexity_nonlinear_proof3}, this implies
\begin{align*}
&\lVert x^{\rms\rmr}_{\rmlin}(\hat{x})-x^{\rms\rmr}\rVert_2\leq \bar{c}_{\mathrm{eq},1}\lVert x^{\rms\rmr}-\hat{x}\rVert_2
\end{align*}
with
\begin{align*}
\bar{c}_{\mathrm{eq},1}\coloneqq &\left(\frac{\tilde{c}_X}{{\sigma_\rms}^2}
\left\lVert\begin{bmatrix}-e_{x^{\rms\rmr}}\\y^{\rms\rmr}-r_{x^{\rms\rmr}}
\end{bmatrix}\right\rVert_2
+\frac{c_{X0}}{\sigma_\rms}\right)\max_{\hat{x}\in X,x^\rms\in\mathcal{B}_x}\lVert x^\rms-\hat{x}\rVert_2.
\end{align*}
Finally, using
\begin{align*}
\lVert\hat{x}-x^{\rms\rmr}_{\rmlin}(\hat{x})\rVert_2\leq&\lVert\hat{x}-x^{\rms\rmr}\rVert_2+\lVert x^{\rms\rmr}-x^{\rms\rmr}_{\rmlin}(\hat{x})\rVert_2,
\end{align*}
we obtain the left inequality in~\eqref{eq:convexity_nonlinear} for $c_{\mathrm{eq},1}\coloneqq\frac{1}{(1+\bar{c}_{\mathrm{eq},1})^2}>0$.\\
\textbf{Proof of $\lVert\hat{x}-x^{\rms\rmr}\rVert_2^2\leq c_{\mathrm{eq},2}\lVert\hat{x}-x^{\rms\rmr}_{\rmlin}(\hat{x})\rVert_2^2$}\\
Applying Lemma~\ref{lem:app_1} with $(x^\rms,u^\rms)=(x^{\rms\rmr},u^{\rms\rmr})$, $(\tilde{x},\tilde{u})=(x^{\rms\rmr}_{\rmlin}(\hat{x}),u^{\rms\rmr}_{\rmlin}(\hat{x}))$, we obtain
\begin{align}\label{eq:prop_convexity_nonlinear_proof4}
\lVert x^{\rms\rmr}-x^{\rms\rmr}_{\rmlin}(\hat{x})\rVert_2
&\leq c_{\rms1}\big(\lVert y^{\rms\rmr}-h(x^{\rms\rmr}_{\rmlin}(\hat{x}),u^{\rms\rmr}_{\rmlin}(\hat{x}))\rVert_2\\\nonumber
&\quad+\lVert x^{\rms\rmr}_{\rmlin}(\hat{x})-f(x^{\rms\rmr}_{\rmlin}(\hat{x}),u^{\rms\rmr}_{\rmlin}(\hat{x}))\rVert_2\big).
\end{align}
Using $y^{\rms\rmr}=h_{\hat{x}}(x^{\rms\rmr}_{\rmlin}(\hat{x}),u^{\rms\rmr}_{\rmlin}(\hat{x}))$ due to Assumption~\ref{ass:reachability} together with a bound of the form~\eqref{eq:diff_lin} for the vector field $h$, the first term is bounded as
\begin{align*}
&\lVert h_{\hat{x}}(x^{\rms\rmr}_{\rmlin}(\hat{x}),u^{\rms\rmr}_{\rmlin}(\hat{x}))-h_{x^{\rms\rmr}_{\rmlin}(\hat{x})}(x^{\rms\rmr}_{\rmlin}(\hat{x}),u^{\rms\rmr}_{\rmlin}(\hat{x}))\rVert_2\\
\leq&c_{Xh}\lVert \hat{x}-x^{\rms\rmr}_{\rmlin}(\hat{x})\rVert_2^2
\end{align*}
for some $c_{Xh}>0$.
Moreover, the second term on the right-hand side of~\eqref{eq:prop_convexity_nonlinear_proof4} is bounded as
\begin{align*}
&\lVert x^{\rms\rmr}_{\rmlin}(\hat{x})-f(x^{\rms\rmr}_{\rmlin}(\hat{x}),u^{\rms\rmr}_{\rmlin}(\hat{x}))\rVert_2\\
=&\lVert f_{\hat{x}}(x^{\rms\rmr}_{\rmlin}(\hat{x}),u^{\rms\rmr}_{\rmlin}(\hat{x}))-f_{x^{\rms\rmr}_{\rmlin}(\hat{x})}(x^{\rms\rmr}_{\rmlin}(\hat{x}),u^{\rms\rmr}_{\rmlin}(\hat{x}))\rVert_2\\
\stackrel{\eqref{eq:diff_lin}}{\leq}&c_X\lVert \hat{x}-x^{\rms\rmr}_{\rmlin}(\hat{x})\rVert_2^2.
\end{align*}
Combining the above inequalities, we obtain
\begin{align*}
&\lVert \hat{x}-x^{\rms\rmr}\rVert_2\\
\leq&\lVert \hat{x}-x^{\rms\rmr}_{\rmlin}(\hat{x})\rVert_2+\lVert x^{\rms\rmr}_{\rmlin}(\hat{x})-x^{\rms\rmr}\rVert_2\\
\leq&\lVert \hat{x}-x^{\rms\rmr}_{\rmlin}(\hat{x})\rVert_2+c_{\rms1}(c_{Xh}+c_X)\lVert\hat{x}-x^{\rms\rmr}_{\rmlin}(\hat{x})\rVert_2^2,
\end{align*}
leading to the right inequality in~\eqref{eq:convexity_nonlinear} for 
\begin{align*}
c_{\mathrm{eq},2}\coloneqq (1+c_{\rms1}(c_{Xh}+c_X)\max_{\hat{x}\in X}\lVert \hat{x}-x^{\rms\rmr}_{\rmlin}(\hat{x})\rVert_2)^2.
\end{align*}
\end{proof}

\begin{IEEEbiography}[{\includegraphics[width=1in,height=1.25in,clip,keepaspectratio]{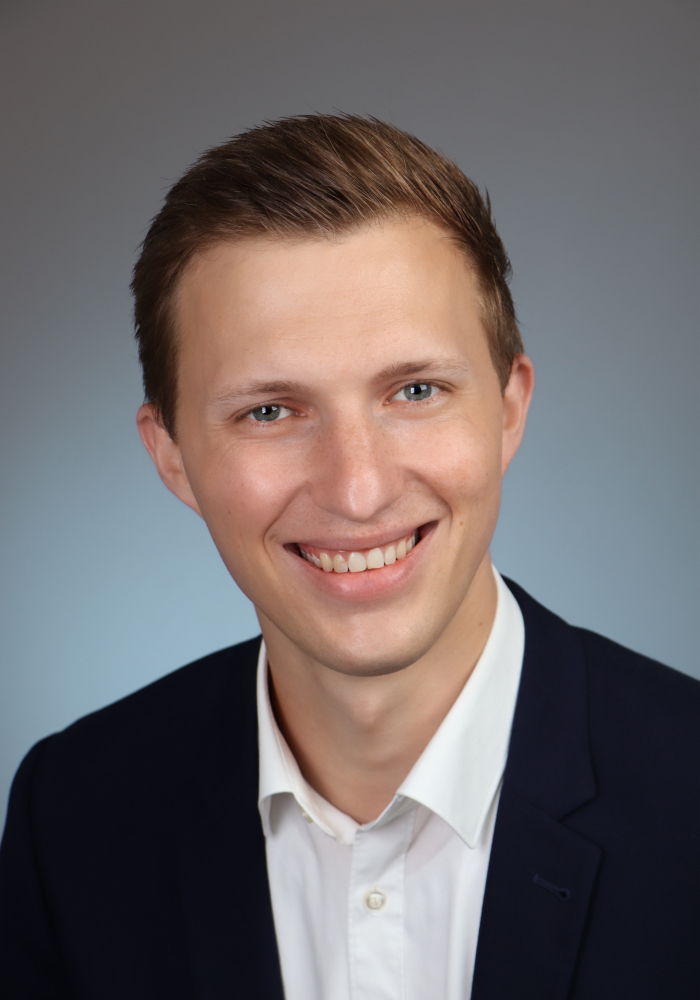}}]{Julian Berberich}
received the Master’s degree in Engineering Cybernetics from the University of Stuttgart, Germany, in 2018. Since 2018, he has been a Ph.D. student at the Institute for Systems Theory and Automatic Control under supervision of Prof. Frank Allg\"ower and a member of the International Max-Planck Research School (IMPRS) at the University of Stuttgart. He has received the Outstanding Student Paper Award at the 59th Conference on Decision and Control in 2020. His research interests are in the area of data-driven analysis and control.
\end{IEEEbiography}

\begin{IEEEbiography}[{\includegraphics[width=1in,height=1.25in,clip,keepaspectratio]{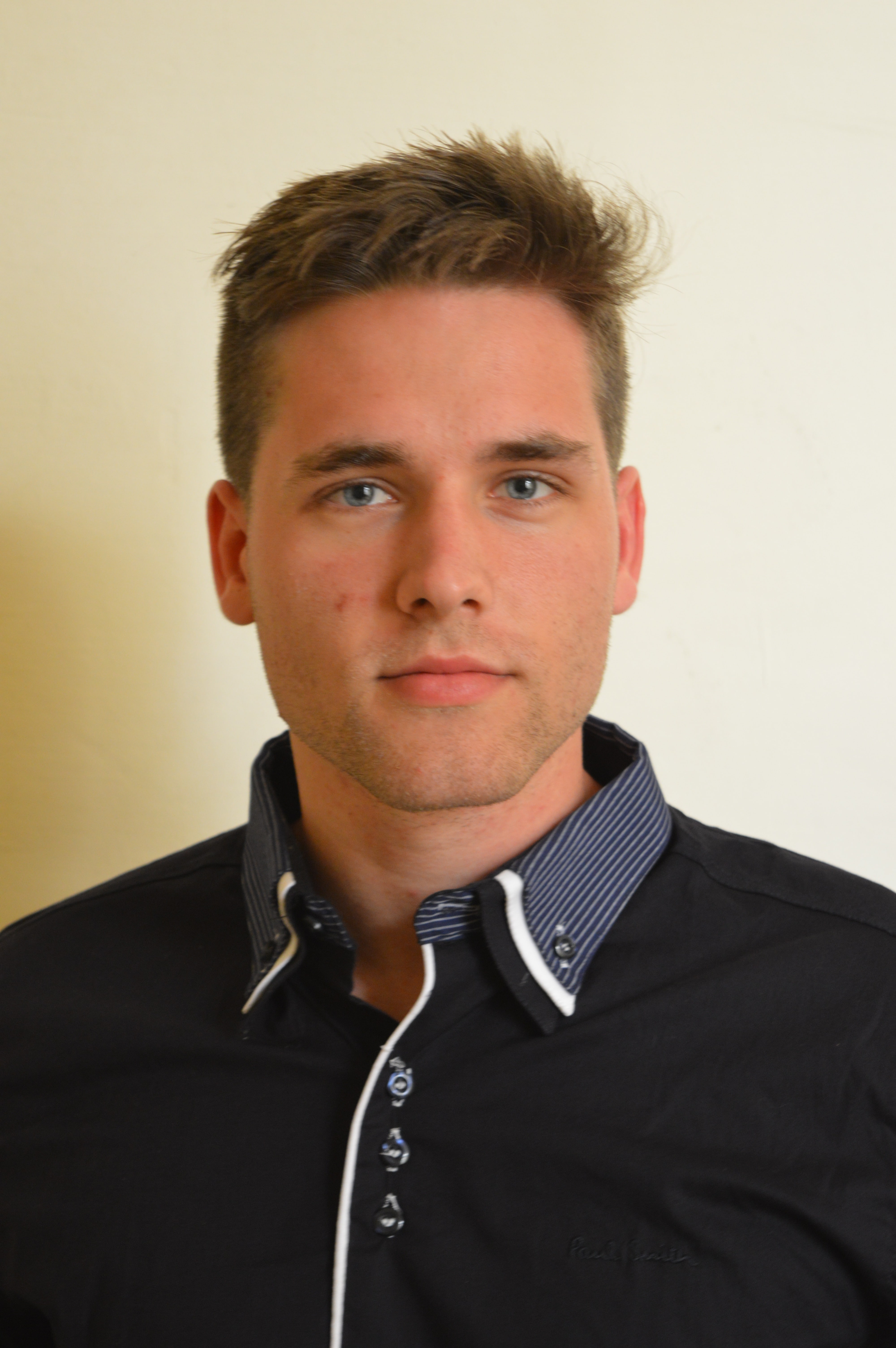}}]{Johannes K\"ohler}
received his Master degree in Engineering Cybernetics from the University of Stuttgart, Germany, in 2017.
In 2021, he obtained a Ph.D. in Mechanical Engineering, also from the University of Stuttgart, Germany. 
Since then, he is a postdoctoral researcher at the \textit{Institute for Dynamic Systems and Control} at ETH Zürich. 
His research interests are in the area of model predictive control and the control of nonlinear uncertain systems.
 \end{IEEEbiography}

\begin{IEEEbiography}[{\includegraphics[width=1in,height=1.25in,clip,keepaspectratio]{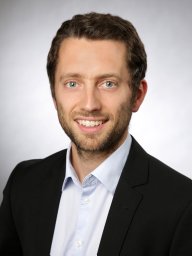}}]{Matthias A. M\"uller}
received a Diploma degree in Engineering Cybernetics from the University of Stuttgart, Germany, and an M.S. in Electrical and Computer Engineering from the University of Illinois at Urbana-Champaign, US, both in 2009.
In 2014, he obtained a Ph.D. in Mechanical Engineering, also from the University of Stuttgart, Germany, for which he received the 2015 European Ph.D. award on control for complex and heterogeneous systems. Since 2019, he is director of the Institute of Automatic Control and full professor at the Leibniz University Hannover, Germany. 
He obtained an ERC Starting Grant in 2020 and is recipient of the inaugural Brockett-Willems Outstanding Paper Award for the best paper published in Systems \& Control Letters in the period 2014-2018. His research interests include nonlinear control and estimation, model predictive control, and data-/learning-based control, with application in different fields including biomedical engineering.
\end{IEEEbiography}

\begin{IEEEbiography}[{\includegraphics[width=1in,height=1.25in,clip,keepaspectratio]{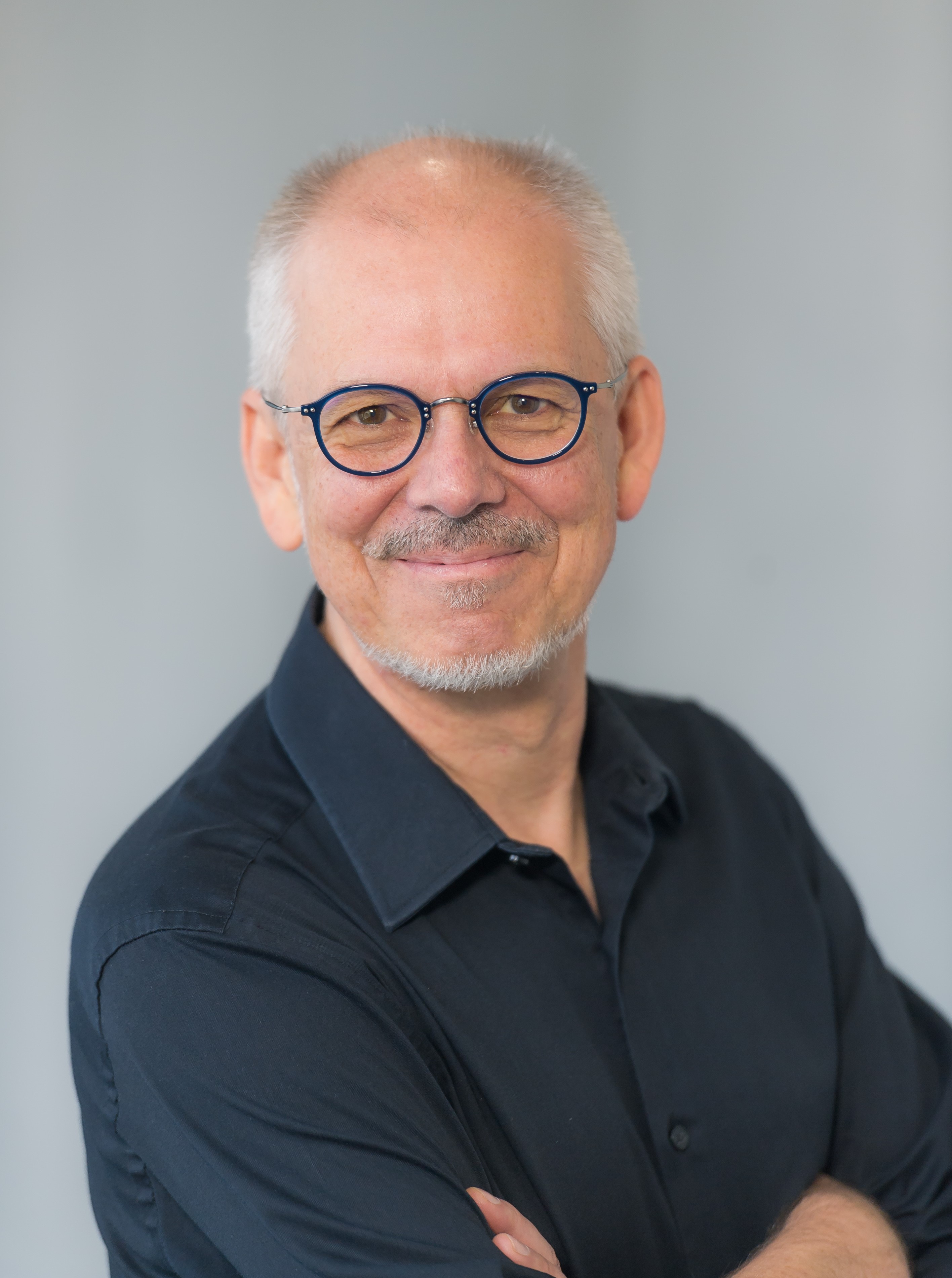}}]{Frank Allg\"ower}
is professor of mechanical engineering at the University of Stuttgart, Germany, and Director of the Institute for Systems Theory and Automatic Control (IST) there.\\ 
Frank is active in serving the community in several roles: Among others he has been President of the International Federation of Automatic Control (IFAC) for the years 2017-2020, Vice-president for Technical Activities of the IEEE Control Systems Society for 2013/14, and Editor of the journal Automatica from 2001 until 2015. From 2012 until 2020 Frank served in addition as Vice-president for the German Research Foundation (DFG), which is Germany’s most important research funding organization. \\
His research interests include predictive control, data-based control, networked control, cooperative control, and nonlinear control with application to a wide range of fields including systems biology.
\end{IEEEbiography}
% that's all folks
\end{document}